\documentclass[11pt]{article}

\usepackage{fullpage}
\usepackage{amssymb}
\usepackage{amsmath}
\usepackage{amsthm}
\usepackage{thmtools}
\usepackage[all]{xy}
\usepackage{graphicx}
\usepackage{mathrsfs}
\usepackage{hyperref}
\usepackage[capitalise]{cleveref}
\usepackage[dvipsnames]{xcolor}
\usepackage{verbatim}
\usepackage{booktabs}
\usepackage{longtable}
\usepackage{array}
\usepackage{adjustbox}
\usepackage[pagewise]{lineno} 

\newcommand{\repcell}[1]{
  \begin{minipage}[t]{\linewidth}
  \raggedright\footnotesize
  \(\begin{aligned}[t]
  #1
  \end{aligned}\)
  \end{minipage}
}

\newcommand{\graphcell}[2][@-0pc]{
  \begin{minipage}[t]{\linewidth}
  \centering
  \begin{adjustbox}{valign=t,max width=\linewidth,max height=.18\textheight}
  \(\xymatrix#1{#2}\)
  \end{adjustbox}
  \end{minipage}
}

\hypersetup{
  hidelinks,
  colorlinks  = true,    
  urlcolor    = blue,    
  linkcolor   = blue,    
  citecolor   = blue,      
  pdfauthor   = {Gabe Cunningham, Mark Mixer},
}

\newtheorem*{acomment}{\color{BrickRed}{Comment}}

\newtheorem{theorem}{Theorem}[section]
\newtheorem{lemma}[theorem]{Lemma}
\newtheorem{proposition}[theorem]{Proposition}
\newtheorem{corollary}[theorem]{Corollary}

\theoremstyle{definition}

\newtheorem{hypothesis}[theorem]{Hypothesis}

\newcommand{\G}{\Gamma}

\newcommand{\mix}{\mathbin{\diamondsuit}}
\newcommand{\comix}{\mathbin{\square}}

\title{Representing alternating groups as self-dual string C-groups of high rank}

\author{
  Gabe Cunningham\thanks{Wentworth Institute of Technology, Boston, MA, USA 02115. Email: cunninghamg1@wit.edu (Corresponding Author)}
  \and
  Mark Mixer\thanks{Wentworth Institute of Technology, Boston, MA, USA 02115. Email: mixerm@wit.edu}
}

\date{ \today }

\begin{document}

\maketitle

\begin{abstract}

The highest rank of a string C-group representation of the alternating group $A_n$ is known for each $n$, but no self-dual representations attaining this highest rank are known when $n > 12$.
Motivated by computational results for alternating groups of small degree, we examine a vertex-gluing construction for permutation representation graphs.  We establish conditions under which gluing two string C-groups produces another string C-group, and use this construction to obtain infinite families of self-dual representations of alternating groups.  In particular, for every $n = 4m+3 \geq 15$, we construct $\left \lfloor \frac{n+9}{8} \right \rfloor$ 
distinct self-dual string C-groups of rank $2m$ isomorphic to $A_{n}$.  These representations have rank one below the maximum possible rank of string C-group representations for $A_n$, and to the authors' knowledge are the highest-rank self-dual representations currently known for
alternating groups.

\vskip.1in
\medskip
\noindent
Key Words: String C-group, Symmetric Group, Alternating Group, Duality, Abstract Polytope

\noindent MSC 20B25, 05E18, 20D06, 52B11

\medskip
\noindent

\end{abstract}

\section{Introduction}

\emph{String C-groups} of rank $r$ are quotients of the universal string Coxeter group
\[ W_r = \langle \rho_0, \ldots, \rho_{r-1} \mid \rho_i^2 = 1 ~\forall  i, \ (\rho_i \rho_j)^2 = 1 ~\forall i,j \textrm{ such that } |i-j| \geq 2 \rangle \]
that inherit its \emph{intersection condition}
\begin{equation}
\langle \rho_i \mid i \in I \rangle \cap \langle \rho_j \mid j \in J \rangle = \langle \rho_k \mid k \in I \cap J \rangle.
\end{equation}
The study of string C-group representations of finite groups is a rich and varied field, particularly when it comes to alternating and symmetric groups. Of particular interest is the question: Given a group $\G$, what is the highest rank $r$ such that there is a string C-group $\Lambda = \langle \rho_0, \ldots, \rho_{r-1} \rangle$ with $\Lambda \cong \G$? In a series of papers, it was established that for each $n$, the highest rank $r$ of a string C-group representation of the alternating group $A_n$ is $\lfloor \frac{n-1}{2} \rfloor$ when $n \geq 12$ \cite{CFLM2016,highest-rank-an,FLM12,FLM12A}.

The string C-group $\G = \langle \rho_0, \ldots, \rho_{r-1} \rangle$ is said to be \emph{self-dual} if there is a group automorphism of $\G$ that sends each $\rho_i$ to $\rho_{r-i-1}$. The string C-groups of highest rank for $A_n$ that are described in \cite{FLM12} are not self-dual. So it is natural to ask: What is the highest rank of a self-dual string C-group for $A_n$? Using Magma \cite{Magma}, we computed this highest rank for $n \leq 19$. For comparison, in \cref{tab:self-dual-an} we include the highest rank of all string C-group representations of $A_n$ when we do not restrict ourselves to self-dual string C-groups. 

For some small $n$-values, the highest rank is unchanged -- in other words, some of the highest rank string C-groups for $A_n$ are already self-dual. In general, however, the data suggest that for $n \geq 13$, the largest rank of a self-dual string C-group for $A_n$ is at most $\lfloor \frac{n-1}{2} \rfloor - 1$, one less than the highest rank for alternating groups in general. 

\begin{table}
\begin{center}
\begin{tabular}{|c|c|c|c|c|} \hline
$n$ & Max Rank SD & Max Rank & \# of Examples & Reference \\ \hline
$\leq 4$ & N/A & N/A & N/A & \\ \hline
5 & 3 & 3 & 1 & $\{5,5\}_3$ \\ \hline
6-8 & N/A & N/A & N/A &  \\ \hline
9 & 4 & 4 & 1 & \cite[Figure 4]{FLM12A}, type $\{5,6,5\}$ \\ \hline
10 & 3 & 5 & 11 &  Appendix~\ref{appendix:A10} \\ \hline
11 & 6 & 6 & 2& \cite[Figure 6]{FLM12A}, \\ &&&& types $\{5,3,6,3,5\}$, $\{5,5,6,5,5\}$ \\ \hline
12 & 5 & 5 & 1 &  Appendix~\ref{appendix:A12} \\ \hline
13 & 4 & 6 & 2 &  Appendix~\ref{appendix:A13}\\ \hline
14 & 4 & 6 & 1 &   Appendix~\ref{appendix:A14} \\ \hline
15 & 6 & 7 & 3 & Corollary \ref{cor:3examples}\\ \hline
16 & 6 & 7 & 3 &   Appendix~\ref{appendix:A16}\\ \hline
17 & 6 & 8 & 2 &   Appendix~\ref{appendix:A17} \\ \hline
18 & 4 & 8 & 4 &  Appendix~\ref{appendix:A18} \\ \hline
19 & 8 & 9 & 3 & Corollary \ref{cor:3examples} \\ \hline
\end{tabular}

\caption{Comparing the highest rank of a self-dual string C-group to the overall highest rank of a string C-group isomorphic to the alternating group $A_n$.  The first column gives the degree $n$.  The second column gives the highest rank of a self-dual string C-group for $A_n$.  The third column gives the highest rank of a string C-group for $A_n$.  The fourth column gives the number of self-dual examples of highest rank. \label{tab:self-dual-an}}
\end{center}
\end{table}

When $n = 15$ or $n = 19$, we found exactly three self-dual string C-groups of rank $\lfloor \frac{n-1}{2} \rfloor - 1$ and none of higher rank. Furthermore, the structure of these groups is special; their permutation representation graphs can be naturally split into two pieces $H_1$ and $H_2$ (with one shared vertex) such that $H_2$ is the dual of $H_1$. We verified computationally that similar graphs worked to produce self-dual string C-groups of rank $\lfloor \frac{n-1}{2} \rfloor - 1$ for $n = 23, 27$, and $31$. Furthermore, as $n$ increased, we found more self-dual string C-groups of this rank, each of which also exhibited a similar structure. In this paper, we focus on those self-dual string C-groups whose permutation representation graphs can be split in this way.  In~\cite{leemans-mulpas-gluing}, similar graphs for symmetric groups are studied.  We focus on even groups, and are able to prove the following.

\begin{theorem}
\label{thm:real-main}
For each $n = 4m+3$, with $n \geq 15$, there are at least $\lfloor \frac{n+9}{8} \rfloor$ distinct self-dual string C-groups of rank $2m$ for $A_{n}$.
\end{theorem}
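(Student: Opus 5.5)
The plan is to build each representation by the vertex-gluing construction described in the introduction. Take a string C-group $H_1 = \langle \rho_0, \ldots, \rho_{m-1}\rangle$ of rank $m$, given as a permutation representation graph on $2m+2$ points with a distinguished vertex $v$. Let $H_2$ be its dual, acting on another $2m+2$ points, with generators relabelled $\rho_m, \ldots, \rho_{2m-1}$ by $\rho_{m-1-i} \mapsto \rho_{m+i}$ and distinguished vertex $v'$. Identifying $v$ with $v'$ gives a graph on $4m+3 = n$ vertices with $2m$ labels. Relabelling the labels by $i \mapsto 2m-1-i$ just swaps $H_1$ and $H_2$, so this graph is isomorphic to its own dual. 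The resulting permutation group is therefore self-dual, because conjugating by the vertex involution that swaps the two halves realizes the duality automorphism. The whole proof then reduces to three points: the glued group is a string C-group, it is isomorphic to $A_n$, and at least $\lfloor (n+9)/8\rfloor$ of the glued groups are pairwise non-isomorphic as string C-groups.

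\textbf{Step 1 (gluing lemma).} I would state and prove a gluing criterion. Suppose $v$ is fixed by $\rho_0, \ldots, \rho_{m-2}$ and moved only by $\rho_{m-1}$, and symmetrically for $H_2$. Then the commuting relations $(\rho_i\rho_j)^2=1$ for $|i-j|\ge 2$ hold automatically: generators from opposite halves act on disjoint supports, apart from $\rho_{m-1}$ and $\rho_m$, which are adjacent and so need not commute. For the intersection condition I would use the standard inductive criterion: it suffices that $\Gamma_0 = \langle\rho_1,\dots\rangle$ and $\Gamma_{2m-1}$ are string C-groups and that $\Gamma_0\cap\Gamma_{2m-1} = \Gamma_{0,2m-1}$. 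Each maximal parabolic subgroup splits as a glued graph in which one side is a parabolic subgroup of $H_1$ or $H_2$. Its orbits can be read off the graph, and the resulting intersections are controlled by the intersection property of $H_1$ and $H_2$ together with orbit and block arguments at the shared vertex.

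\textbf{Step 2 (alternating group).} For each generator, each connected component of the edges with that label is a single edge, so the generator is a product of disjoint transpositions. I would choose the $H_1$ families so that every generator has an even number of edges, which puts the group inside $A_n$. For the reverse inclusion, the glued graph is connected, so the group is transitive. For primitivity, I would find a point stabilizer containing a subgroup that is transitive on the remaining points, which gives 2-transitivity. I would then exhibit a 3-cycle, or a prime cycle of length $p \le n-3$, as a suitable word (for example a power of $\rho_{m-1}\rho_m$ or of a neighbouring pair) and apply Jordan's theorem to conclude the group is $A_n$.

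\textbf{Step 3 (families and counting).} I would take a parametrized family of graphs $H_1(k)$, presumably the ones found computationally for $n=15,19,23,27,31$. In these, a short cycle or a small "hook" is attached at position $k$ along a linear string of edges. The range of admissible positions $k$ should grow like $m/2$, which gives $\lfloor (n+9)/8\rfloor = \lfloor (m+3)/2\rfloor$ choices. To show the resulting groups are distinct, I would compare Schläfli types, i.e. the orders of $\rho_i\rho_{i+1}$, which differ according to where the hook sits. Since the construction is symmetric, the hook at $k$ and its mirror position may give the same group, which accounts for the halving.

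I expect Step 1, the intersection condition for the glued group, to be the main obstacle, especially the parabolic subgroups that contain both $\rho_{m-1}$ and $\rho_m$: their orbits cross the shared vertex, so the intersection property cannot simply be inherited from $H_1$ and $H_2$. My first attempt would be to show that the subgroup $\langle \rho_0,\dots,\rho_{2m-2}\rangle$ acts on a set in which $H_2$'s remaining generators fix a block, so the group decomposes as a (sub)direct structure. I would then verify $\Gamma_0\cap\Gamma_{2m-1}=\Gamma_{0,2m-1}$ by checking which points each side can move.
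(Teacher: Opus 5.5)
Your architecture matches the paper's. You glue a rank-$m$ string C-group on $2m+2$ points to its dual at a degree-one vertex. You get $A_n$ from primitivity plus the 3-cycle $(\rho_{m-1}\rho_m)^2$ and Jordan's theorem, and you separate the examples by Schl\"afli type. But the two steps that carry the content are left as intentions, and the heuristics you give for them would not work.

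\textbf{The intersection condition.} It is \emph{not} controlled by the intersection property of the two halves plus orbit arguments at $v$. The paper gives an even glued group whose halves are both string C-groups, yet $\langle\rho_0,\rho_1,\rho_2\rangle\cap\langle\rho_1,\ldots,\rho_5\rangle$ is three times as large as $\langle\rho_1,\rho_2\rangle$. The glued cube shows that without evenness even the intersections straddling $v$ fail. Three missing ideas are needed.
\begin{enumerate}
\item Evenness is needed not only to land in $A_n$ but to prove the straddling intersections $\langle\rho_0,\ldots,\rho_j\rangle\cap\langle\rho_i,\ldots,\rho_{r-1}\rangle$ for $1\le i\le m-1$, $m\le j\le r-2$ (\cref{thm:main}). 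Deleting letters from two expressions of $\varphi$ gives an element of $\langle\rho_i,\ldots,\rho_j\rangle$ agreeing with $\varphi$ off $O=v\langle\rho_i,\ldots,\rho_j\rangle$. On $O$, $\langle\rho_i,\ldots,\rho_j\rangle$ acts as $A_{|O|}$ or $S_{|O|}$. Evenness of $\Gamma$ supplies the parity match, and \cref{mix-with-sn} with $\alpha=(\rho_{m-1}\rho_m)^2$ combines the two actions.
\item A parity argument handles the cases $i=m-1$ and $j=m$.
\item The remaining conditions $\langle\rho_0,\ldots,\rho_{m-1}\rangle\cap\langle\rho_i,\ldots,\rho_{r-1}\rangle=\langle\rho_i,\ldots,\rho_{m-1}\rangle$ for $1\le i\le m-2$, and their duals, need a genuinely piece-specific analysis of the block system of the chosen $H_1$ (\cref{glueXYZ}, \cref{glue-box-left}).
\end{enumerate}
Incidentally, primitivity needs no 2-transitivity. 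A block through $v$ and some $x\in H_1$ is fixed by $\rho_m$, so it contains $v\rho_m\in H_2$, and every generator fixes one of these two points.

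\textbf{The families and the count.} You never specify the building blocks, so neither their string C property nor the count is established, and the halving mechanism you guess is not the real one. The paper uses three one-parameter families $X(m)$, $Y(m)$, $Z(m)$, each string C and isomorphic to $(2\wr S_{m+1})^+$ for $m\ge 4$; these give three examples. It adds the two-parameter family $\Gamma(b,t)$ with $b$ boxes and tail $t$, where $b+t=m$ and $t\ge 2$. Its generator $\rho_{b+1}$ has $b+3$ edges, so $\Gamma(b,t)$ is even exactly when $b$ is odd, and for $b$ even the glued group is provably not string C (\cref{prop:notC}). Taking odd $b$ with $3\le b\le m-2$, the count is $3+\lfloor (m-3)/2\rfloor=\lfloor (m+3)/2\rfloor=\lfloor (n+9)/8\rfloor$. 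The factor $\tfrac12$ comes from parity, not from identifying mirror positions of a hook. Showing that these blocks are string C-groups is itself an induction (via \cref{max}, $k$-orientability and mixes with $E_k$) that your proposal does not address.
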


The paper is organized as follows.  In \cref{sec:background}, we review the necessary background information on string C-groups, permutation groups, mixes and comixes, and orientability.   In \cref{sec:glue}, we introduce a vertex-gluing construction and establish conditions under which gluing two string C-groups produces another string C-group.  In \cref{sec:families}, we apply these results and obtain nine families of string C-groups for alternating groups, including three infinite families of self-dual examples.  In \cref{sec:moreboxes}, we generalize one family to obtain the collection of 
$\left\lfloor \frac{n+9}{8}\right\rfloor$ 
non-isomorphic self-dual string C-groups of rank $2m$ for $A_{n}$ when $n=4m+3 \geq 15$.
We conclude with open problems and a conjecture concerning the highest rank of self-dual string C-group representations of alternating groups.

\section{Background} \label{sec:background}

\subsection{String C-groups}

The \emph{universal string Coxeter group of rank $r$} is the group
\[ W_r = \langle \rho_0, \ldots, \rho_{r-1} \mid \rho_i^2 = 1 ~\forall  i, \ (\rho_i \rho_j)^2 = 1 ~\forall i,j \textrm{ such that } |i-j| \geq 2 \rangle \]
A \emph{rank $r$ string group generated by involutions (sggi)} is a quotient of $W_r$; in other words, a group $\G = \langle \rho_0, \ldots, \rho_{r-1} \rangle$ satisfying $\rho_i^2 = 1$ for all $i$ and $(\rho_i \rho_j)^2 = 1$ whenever $|i - j| \geq 2$. Note that some of the generators of $\G$ may be trivial or may coincide.

Whenever $\G = \langle \rho_0, \ldots, \rho_{r-1} \rangle$ is an sggi, we define:
\[ \G_{r-1} = \langle \rho_0, \ldots, \rho_{r-2} \rangle, \]
\[ \G_0 = \langle \rho_1, \ldots, \rho_{r-1} \rangle, \]
\[ \G_{0,r-1} = \langle \rho_1, \ldots, \rho_{r-2} \rangle. \]

Of particular interest are \emph{string C-groups}, which are sggis that satisfy the following \emph{intersection condition}:
\begin{equation}
\langle \rho_i \mid i \in I \rangle \cap \langle \rho_j \mid j \in J \rangle = \langle \rho_k \mid k \in I \cap J \rangle.
\end{equation}
Each string C-group $\G$ (with this distinguished set of generators) uniquely determines an abstract regular polytope $\mathcal{P}$ whose automorphism group is $\G$ (see \cite[Section 2E]{ARP}). Thus, questions about abstract regular polytopes can be entirely expressed in the language of string C-groups.

In this paper, we will frequently build groups that will obviously be sggis, and our goal is to show that they are string C-groups. We will also simply say that they are \emph{string C}, as an adjective. Here we present two helpful results that greatly reduce the number of cases of the intersection condition that we need to check.

\begin{proposition}\label{int-cond}
Suppose $\Gamma = \langle \rho_0, \ldots, \rho_{r-1} \rangle$ is an sggi, with no $\rho_k$ trivial and with $\rho_k \neq \rho_t$ whenever $k \neq t$. If $\langle \rho_0, \ldots, \rho_j \rangle \cap \langle \rho_i, \ldots, \rho_{r-1} \rangle = \langle \rho_i, \ldots, \rho_j \rangle$ for all $1 \leq i \leq j \leq r-2$, then $\Gamma$ is a string C-group.
\end{proposition}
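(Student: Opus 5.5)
The plan is to reduce to the standard inductive characterization of string C-groups in \cite[Proposition 2E16]{ARP}. That result says an sggi $\Gamma = \langle \rho_0, \ldots, \rho_{r-1}\rangle$ is a string C-group if and only if $\Gamma_0$ and $\Gamma_{r-1}$ are string C-groups and $\Gamma_0 \cap \Gamma_{r-1} = \Gamma_{0,r-1}$. I will apply it not just to $\Gamma$ but to every "interval" subgroup $\Gamma_{[a,b]} := \langle \rho_a, \ldots, \rho_b\rangle$ with $0 \le a \le b \le r-1$. I will show by induction on $b-a$ that each $\Gamma_{[a,b]}$ is a string C-group; the case $a=0$, $b=r-1$ is the claim.

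\emph{Base cases.} When $b=a$, the group $\langle \rho_a\rangle$ has order $2$ because $\rho_a \ne 1$, so it is trivially string C. When $b=a+1$, the group $\langle \rho_a,\rho_{a+1}\rangle$ is dihedral. Here the intersection condition amounts to $\langle\rho_a\rangle\cap\langle\rho_{a+1}\rangle = 1$, which holds because $\rho_a$ and $\rho_{a+1}$ are distinct nontrivial involutions.

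\emph{Inductive step.} Suppose $b-a \ge 2$. The subgroups $\Gamma_{[a,b-1]}$ and $\Gamma_{[a+1,b]}$ are string C by induction, so it remains to prove
\[ \Gamma_{[a,b-1]} \cap \Gamma_{[a+1,b]} = \Gamma_{[a+1,b-1]}. \]
The inclusion $\supseteq$ is clear. For $\subseteq$, enlarge both sides:
\[ \Gamma_{[a,b-1]} \cap \Gamma_{[a+1,b]} \subseteq \Gamma_{[0,b-1]} \cap \Gamma_{[a+1,r-1]}. \]
Now set $i=a+1$ and $j=b-1$. Since $b-a\ge 2$ we have $i \le j$, and also $1 \le i$ and $j \le r-2$. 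So the hypothesis of \cref{int-cond} applies directly and says the right-hand side equals $\Gamma_{[a+1,b-1]}$. This gives the required inclusion, and Proposition 2E16 then shows $\Gamma_{[a,b]}$ is string C.

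\emph{Main obstacle.} Proposition 2E16 already contains the combinatorial work of deducing all intersections $\langle \rho_i : i\in I\rangle \cap \langle\rho_j: j\in J\rangle$ for arbitrary $I,J$, so I will not redo it. The step needing care is checking that the version of 2E16 invoked has exactly these hypotheses. If a form demanding the disjoint-interval cases explicitly is preferred, those cases also follow from the hypothesis. Take $\Gamma_{[0,j]} \cap \Gamma_{[j+1,r-1]}$. If $1 \le j$, the hypothesis with $i=j$ gives
\[ \Gamma_{[0,j]} \cap \Gamma_{[j+1,r-1]} \subseteq \Gamma_{[0,j]}\cap\Gamma_{[j,r-1]} = \langle\rho_j\rangle. \]
If $j+1 \le r-2$, the hypothesis with $i=j+1$ similarly gives
\[ \Gamma_{[0,j]} \cap \Gamma_{[j+1,r-1]} \subseteq \Gamma_{[0,j+1]}\cap\Gamma_{[j+1,r-1]} = \langle\rho_{j+1}\rangle. \]
When $r \ge 3$ at least one of these applies. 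In the middle range both apply, and the intersection lies in $\langle\rho_j\rangle\cap\langle\rho_{j+1}\rangle = 1$. At the ends only one applies, and it suffices to see that $\rho_j \notin \Gamma_{[j+1,r-1]}$ (respectively $\rho_{j+1}\notin\Gamma_{[0,j]}$). For $j=0$ this holds because $\rho_0 \in \Gamma_{[1,r-1]}$ would force $\rho_0 \in \langle \rho_0,\rho_1\rangle\cap\Gamma_{[1,r-1]} = \langle\rho_1\rangle$, which contradicts $\rho_0 \neq 1$ and $\rho_0\ne\rho_1$. The case $j+1=r-1$ is symmetric. Rank $r\le 2$ is covered by the base cases.
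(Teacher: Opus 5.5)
Your proof is correct and is essentially the paper's argument: both reduce to \cite[Proposition 2E16]{ARP} by induction, and in both the one intersection that 2E16 needs is handled by enlarging it to $\langle \rho_0,\ldots,\rho_j\rangle \cap \langle \rho_i,\ldots,\rho_{r-1}\rangle$ and applying the hypothesis. The only difference is bookkeeping: the paper inducts on $r$ and checks that the hypothesis passes to $\Gamma_0$ and $\Gamma_{r-1}$, while you induct over all interval subgroups $\Gamma_{[a,b]}$ at once; your final paragraph on disjoint intervals is not needed, since 2E16 only requires $\Gamma_0 \cap \Gamma_{r-1} = \Gamma_{0,r-1}$.
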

\begin{proof}
We proceed by induction on $r$.  First, suppose $r = 2$. Every group $\langle \rho_0, \rho_1 \rangle$ with no $\rho_k$ trivial and $\rho_0 \neq \rho_1$ is a string C-group.

Now, suppose the result is true for sggis of rank $r-1$, and consider $\Gamma_{r-1} = \langle \rho_0, \ldots, \rho_{r-2} \rangle$. Note that $\Gamma_{r-1}$ also has no $\rho_k$ trivial, and $\rho_k \neq \rho_t$ when $k \neq t$. Furthermore, for each $1 \leq i \leq j \leq r-3$:

\[ \langle \rho_0, \ldots, \rho_j \rangle \cap \langle \rho_i, \ldots, \rho_{r-2} \rangle \leq \langle \rho_0, \ldots, \rho_j \rangle \cap \langle \rho_i, \ldots, \rho_{r-1} \rangle = \langle \rho_i, \ldots, \rho_j \rangle.\] 

On the other hand, the right side is clearly contained in the left side, so $\langle \rho_0, \ldots, \rho_j \rangle \cap \langle \rho_i, \ldots, \rho_{r-2} \rangle = \langle \rho_i, \ldots, \rho_j \rangle$. Thus, by the inductive hypothesis, $\Gamma_{r-1}$ is a string C-group. A similar argument establishes that $\Gamma_0$ is a string C-group. Then $\Gamma_{r-1} \cap \Gamma_0 = \langle \rho_0, \ldots, \rho_{r-2} \rangle \cap \langle \rho_1, \ldots, \rho_{r-1} \rangle$, which is equal to $\langle \rho_1, \ldots, \rho_{r-2} \rangle$ by assumption. Then by \cite[Prop. 2E16]{ARP}, $\Gamma$ is a string C-group.
\end{proof}

\begin{lemma}[Lemma 2.2 of \cite{FLM12A}] \label{max}
Assume $\G_0$ and $\G_{r-1}$ are string C-groups. If $\rho_{0} \not \in \G_{0}$ and if $\G_{0,r-1}$ is maximal in $\G_{r-1}$, then $\G$ is string C.
\end{lemma}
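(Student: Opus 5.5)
By \cite[Prop. 2E16]{ARP}, since $\G_0$ and $\G_{r-1}$ are string C-groups by hypothesis, it suffices to prove the single intersection
\[ \G_{r-1} \cap \G_0 = \G_{0,r-1}. \]
Put $H = \G_{r-1} \cap \G_0$. Clearly $\G_{0,r-1} \leq H \leq \G_{r-1}$. By the maximality of $\G_{0,r-1}$ in $\G_{r-1}$, either $H = \G_{0,r-1}$, in which case we are done, or $H = \G_{r-1}$. So the plan is to rule out the second alternative.

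Suppose $H = \G_{r-1}$. Then $\G_{r-1} \leq \G_0$. In particular $\rho_0 \in \G_{r-1} \leq \G_0$, which contradicts the hypothesis $\rho_0 \notin \G_0$. Hence $H = \G_{0,r-1}$, and $\G$ is string C.

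The only point to handle carefully is the invocation of \cite[Prop. 2E16]{ARP}. That criterion says an sggi is a string C-group exactly when its two facet-type subgroups $\G_{r-1}$ and $\G_0$ are string C-groups and $\G_{r-1}\cap\G_0 = \G_{0,r-1}$. This is the same criterion used at the end of the proof of \cref{int-cond}, so no further obstacle arises. We also need the degenerate case $r=2$ to be excluded or trivially true; it is covered by the same argument, since the maximality dichotomy still applies there. There is essentially no hard step. The whole content is the dichotomy forced by maximality, together with the observation that containment of $\G_{r-1}$ in $\G_0$ would put $\rho_0$ in $\G_0$.
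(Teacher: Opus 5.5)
Your proof is correct and matches the standard argument. The paper does not reprove this lemma; it cites Lemma~2.2 of \cite{FLM12A}, whose proof is the same: apply \cite[Prop.~2E16]{ARP}, use maximality to get the dichotomy $\G_0 \cap \G_{r-1} \in \{\G_{0,r-1}, \G_{r-1}\}$, and rule out the second case because it would put $\rho_0$ in $\G_0$.
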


The \emph{dual} of the sggi $\G$ is the group $\G^* = \langle \rho_{r-1}, \ldots, \rho_0 \rangle$. This is clearly an sggi, and will be string C if and only if $\G$ is. We say that $\G$ is \emph{self-dual} if there is an isomorphism from $\G$ to itself that sends each $\rho_i$ to $\rho_{r-1-i}$. 

The \emph{type} or \emph{Schl\"afli symbol} of a rank $r$ sggi $\langle \rho_0, \ldots, \rho_{n-1} \rangle$ is a symbol
\[ \{k_1, \ldots, k_{r-1} \}, \]
where $k_i = |\rho_{i-1} \rho_i|$. Whenever there are repeated elements in a type, we sometimes abbreviate the type using exponents, so that $\{4, 3^2, 4\}$ should be understood as the type $\{4, 3, 3, 4\}$.

\subsection{Permutation groups}

Let  $\G=\langle \rho_0,\ldots,\rho_{r-1}\rangle
   \leq S_{\Omega}$
be an sggi represented as a permutation group on a finite set $\Omega$.
In this paper we will often be interested in even permutation groups, and we define $\G^+ = \G \cap A_{\Omega}.$. When the context is clear, we will use $2$ to represent the unique group of order 2.   For instance, $(2 \wr S_{m})^+$ represents the even subgroup of the wreath product of the group of order 2 and a symmetric group $S_m$.

The \emph{permutation representation graph} of $\G$ is the
edge-labeled (multi)graph whose vertex set is $\Omega$ and in which vertices
$x$ and $y$ are joined by an edge labeled $i$ whenever
$(x,y)$ is a transposition in the disjoint cycle decomposition of
$\rho_i$.  Equivalently, there is an $i$-edge between $x$ and $y$ if
and only if $
x\rho_i=y$ for $y \neq x$.
A point fixed by $\rho_i$ is not incident to any edge labeled $i$.

Since each $\rho_i$ is an involution, the edges of any fixed label
form a matching, where every vertex is incident to at most one edge of that
label.  Edges with different labels may join the same pair
of vertices, and thus permutation representation graphs can contain multiple (parallel) edges between the same two vertices.  Such a labeled graph determines a permutation
representation, since each generator $\rho_i$ can be recovered by
interchanging the endpoints of every edge labeled $i$ and fixing all
remaining vertices.  When we refer to the group of a permutation
representation graph, we mean the group generated by these
involutions.

The connected components of the permutation representation graph are
precisely the orbits of $\G$ on $\Omega$.  Additionally, the orbits of $
\G_0 $ are the connected components of the graph obtained by deleting every
edge whose label is $0$.  In general, the permutation representation
graphs of subgroups generated by subsets of the distinguished
generators can be read directly from the original graph.

The permutation representation graph also records the parity of each generator.  In particular,
$\rho_i$ is even if and only if the graph contains an even number of
edges labeled $i$.  A group $\G$ is an even permutation group
if and only if every distinguished generator is even.

Finally, the permutation representation graph of the dual group is obtained by replacing every edge label $i$ by $r-1-i$.  In particular, a symmetry of a permutation representation graph that reverses the edge labels induces a self-duality of the corresponding sggi.

We next present several results about permutation groups that will be useful in later sections.

The following proposition follows from~\cite[Theorem 3.3E]{DM96}.
\begin{proposition}\label{prime-cycle}
Let $\G$ be a primitive subgroup of $S_n$.  If $\G$ contains a prime cycle that fixes at least 3 points, then $\G$ is either $A_n$ or $S_n$.
\end{proposition}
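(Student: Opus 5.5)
The statement says that a primitive group containing a prime cycle fixing at least three points must be $A_n$ or $S_n$. My plan is to reduce it to Jordan's classical theorem, as refined in \cite[Theorem 3.3E]{DM96}. That theorem says: if $\G \leq S_n$ is primitive and contains a $p$-cycle for a prime $p$ with $p \leq n-3$, then $\G \geq A_n$.

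So let $\G$ be primitive on $\Omega$ with $|\Omega| = n$, and let $g \in \G$ be a $p$-cycle with $p$ prime. Saying that $g$ fixes at least $3$ points is the same as saying its support has size $p \leq n-3$. This is exactly the hypothesis of the cited theorem, which then gives $A_n \leq \G \leq S_n$. Since $A_n$ has index $2$ in $S_n$, the only possibilities are $\G = A_n$ and $\G = S_n$.

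The main point to check is that the version of Jordan's theorem in \cite{DM96} really uses the bound $p \leq n-3$ with no extra hypotheses, such as a lower bound on $n$ or excluding small primes. The bound $n-3$ is the sharp form: $\mathrm{PGL}_2(p)$ acting on $p+1$ points contains a $p$-cycle fixing one point, and $\mathrm{PSL}_2$-type and affine examples show that fixing two points is also not enough. So "at least $3$ fixed points" should match the theorem as stated.

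If the reference instead gives Jordan's older form, namely that $\G$ is $(n-p+1)$-transitive, I would finish as follows. With at least $3$ fixed points, $n-p+1 \geq 4$, so $\G$ is $4$-transitive. Any $4$-transitive group is either $A_n$, $S_n$, or one of the Mathieu groups $M_{11}, M_{12}, M_{23}, M_{24}$. I would rule out the Mathieu groups by checking their cycle types: a prime cycle fixing at least $3$ points would have to be a $p$-cycle with $p \leq n-3$, and the Mathieu groups contain none. For example, the elements of order $11$ in $M_{12}$ are products of one $11$-cycle fixing only one point. This case check is the only place I expect any real obstacle, and it is routine.
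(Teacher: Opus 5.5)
Your proposal is correct and is exactly the paper's argument: the paper simply cites \cite[Theorem 3.3E]{DM96}, which states that a primitive group of degree $n$ containing a cycle of prime length $p$ either contains $A_n$ or has $n \leq p+2$, so your fallback through $4$-transitivity and the Mathieu groups is never needed. One minor correction to your sharpness aside: the examples with exactly two fixed points are of $\mathrm{PSL}_2(2^e)$ type (e.g.\ a $7$-cycle in $\mathrm{PSL}_2(8)$ on $9$ points), whereas affine examples such as $\mathrm{AGL}_1(8)$ on $8$ points only give prime cycles fixing one point.
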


\begin{lemma} [Lemma 5.1 of \cite{CPR}]  \label{sym}
If a subgroup $\Gamma$ of $S_n$ contains the transposition $(n - 1, n)$ as well as a subgroup
acting transitively on $\{1,\ldots, n-1\}$ while keeping $n$ fixed, then $\Gamma = S_n$.
\end{lemma}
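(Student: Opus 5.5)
The statement to prove is Lemma~\ref{sym}: if $\Gamma \le S_n$ contains the transposition $(n-1,n)$ and a subgroup $H$ that fixes $n$ and acts transitively on $\{1,\ldots,n-1\}$, then $\Gamma = S_n$. The plan is to show that $\Gamma$ contains every transposition. Since the transpositions generate $S_n$, this suffices.

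First, I will show that $\Gamma$ contains every transposition of the form $(i,n)$ with $1 \le i \le n-1$. Fix such an $i$. By transitivity of $H$ on $\{1,\ldots,n-1\}$, there is some $h \in H$ with $(n-1)h = i$. Since $h$ fixes $n$, conjugating gives
\[ h^{-1}(n-1,n)h = \bigl((n-1)h,\, nh\bigr) = (i,n). \]
This conjugate lies in $\Gamma$, because both $h$ and $(n-1,n)$ do.

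Second, I will obtain all the remaining transpositions. For distinct $i,j \le n-1$, we have
\[ (i,n)(j,n)(i,n) = (i,j), \]
so $\Gamma$ contains every transposition $(i,j)$. Equivalently, one can note that the star of transpositions $\{(i,n)\}$ already generates $S_n$, since it forms a connected transposition graph.

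No real obstacle is expected. The only care needed is to keep the conjugation convention consistent with the paper's right action ($x\rho$). The key point is that $h$ fixes $n$, so the conjugated transposition keeps $n$ as an endpoint. If $n-1$ is moved to $i$ under either convention, then using $h$ or $h^{-1}$ as appropriate gives the same conclusion.
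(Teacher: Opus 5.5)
Your proof is correct and complete. Conjugating $(n-1,n)$ by an $h$ that fixes $n$ and sends $n-1$ to $i$ gives $h^{-1}(n-1,n)h=(i,n)$ in the paper's right-action convention, and these star transpositions generate $S_n$. The paper gives no proof of this lemma; it imports it from Pellicer's paper, and your conjugation argument is the standard direct proof of it.
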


Note that we do not require that $G$ can be embedded in a wreath product, as the transitivity of $G$ is not required, nor is it required that the blocks all have equal sizes.

\subsection{Mix and Comix}

Whenever we have a group whose permutation representation graph is disconnected, the group decomposes using the idea of the \emph{parallel product} or \emph{mix} of two groups (see \cite{parallelProduct}, \cite[Section 7A]{ARP}, \cite{var-gps}) in a way analogous to the graph's own decomposition into connected components. We describe the basic theory below.

Consider groups $\G = \langle x_1, \ldots, x_n \mid R \rangle$ and $\Lambda = \langle y_1, \ldots, y_n \mid S \rangle$, with the possibility that some $x_i$ or $y_i$ are trivial. Then the \emph{mix} of $\G$ and $\Lambda$ is the group 
\[ \G \mix \Lambda = \langle (x_1, y_1), \ldots, (x_n, y_n) \rangle \leq \G \times \Lambda. \]
It is clear that this is a subdirect product (that is, it projects onto $\G$ and $\Lambda$). Furthermore, a word $(x_{i_1}, y_{i_1}) \cdots (x_{i_n}, y_{i_n})$ is trivial if and only if $x_{i_1} \cdots x_{i_n}$ and $y_{i_1} \cdots y_{i_n}$ are both trivial. This means that if we write $\G$ and $\Lambda$ as quotients of the universal string Coxeter group, say $W_r / M_\G$ and $W_r / M_\Lambda$, then $\G \mix \Lambda \cong W_r / (M_\G \cap M_\Lambda)$. Note that the mix of two groups is naturally commutative and associative. 

Now, suppose that $\G$ and $\Lambda$ are rank $r$ sggis with permutation representation graphs $G_\G$ and $G_\Lambda$, respectively. If we consider the disjoint union of $G_\G$ and $G_\Lambda$, this defines a new rank $r$ sggi that projects onto $\G$ and $\Lambda$. Now, we can identify $\G$ with the group $W_r / K_\G$, where $K_\G$ is the kernel of the action of $W_r$ on $G_\G$. We can identify $\Lambda$ with $W_r / K_\Lambda$ defined similarly. Then the kernel of the action of $W_r$ on $G_\G \sqcup G_\Lambda$ is $K_\G \cap K_\Lambda$. Thus we see that the group with permutation representation graph $G_\G \sqcup G_\Lambda$ is precisely the mix $\G \mix \Lambda$.

Note that if $\G$ and $\Lambda$ are permutation groups acting on sets of equal size in exactly the same way, then $\G \mix \Lambda \cong \G$. In other words, if a permutation group has a permutation representation graph that has two or more copies of any connected component, then we may as well throw away the duplicates.

In order to further determine the structure and size of $\G \mix \Lambda$, it is useful to consider the \emph{comix} $\G \comix \Lambda$. This is the group
\[ \G \comix \Lambda = \langle x_1, \ldots, x_n \mid R, \overline{S} \rangle, \]
where $\overline{S}$ consists of the words in $S$ but with each $y_i$ replaced with $x_i$. In other words, $\G \comix \Lambda$ is the largest quotient of $\G$ and $\Lambda$ that identifies each $x_i$ with $y_i$. This implies that $\G \comix \Lambda \cong W_r/(M_\G M_\Lambda)$.
 The index of $\G \mix \Lambda$ in $\G \times \Lambda$ is equal to the size of $\G \comix \Lambda$. In particular, $\G \mix \Lambda \cong \G \times \Lambda$ if and only if $\G \comix \Lambda$ is trivial. 

Note that the kernel of the natural projection from $\G \mix \Lambda \to \Lambda$ is $M_\Lambda / (M_\G \cap M_\Lambda)$, while the kernel of the natural projection from $\G \to \G \comix \Lambda$ is $M_\G M_\Lambda / M_\G$. These are isomorphic as groups by the Second Isomorphism Theorem. 

One of our primary applications in this paper involves analyzing a permutation group that acts as a symmetric group or alternating group on one orbit. We will use the following result to help us determine the structure of such a group.

\begin{lemma}
\label{mix-with-sn}
Let $\G$ be an even permutation group acting on disjoint sets $\Omega$ and $\Delta$. Let $\G_\Omega$ and $\G_\Delta$ be the permutation groups of the induced actions on the respective sets. Suppose there is some $\alpha \neq 1 \in \G$ such that $\alpha$ acts trivially on $\Omega$, and suppose that $\G_\Delta$ is $A_n$ or $S_n$. If $n \neq 4$, or if $\alpha$ acts as a $3$-cycle on $\Delta$, then 
$\G$ contains all pairs $(\pi, \beta)$ in $\G_\Omega \times \G_\Delta$ such that $\pi$ and $\beta$ have the same parity.
\end{lemma}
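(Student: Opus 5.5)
The idea is to show that the subgroup $K \leq \G$ of elements acting trivially on $\Omega$, viewed as a subgroup of $\G_\Delta$, contains $A_n$. Then we use that $\G$ is even to match parities.

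First, restriction to $\Delta$ embeds $K$ into $\G_\Delta$. Since $K$ is the kernel of the projection $\G \to \G_\Omega$, it is normal in $\G$, so its image $K_\Delta$ is normal in $\G_\Delta$. The image is nontrivial because it contains the image of $\alpha$: indeed $\alpha \neq 1$ while $\alpha$ fixes $\Omega$ pointwise, so $\alpha$ moves some point of $\Delta$.

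Next, we identify $K_\Delta$. For $n \neq 4$ the only nontrivial normal subgroups of $S_n$ or $A_n$ are $A_n$ and $S_n$, so $K_\Delta \geq A_n$. For $n = 4$ the Klein four-group $V_4$ is also normal, but it contains no $3$-cycle. So under the hypothesis that $\alpha$ acts as a $3$-cycle, $K_\Delta \neq V_4$ and again $K_\Delta \geq A_n$. We also need to dispose of $n \le 2$, where $A_n$ is trivial; there the claim about $A_n$ is vacuous but the final step below still works.

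Now take $(\pi,\beta) \in \G_\Omega \times \G_\Delta$ with $\pi$ and $\beta$ of the same parity. Choose $g \in \G$ whose restriction to $\Omega$ is $\pi$, and let $\beta'$ be the restriction of $g$ to $\Delta$. Every element of $\G$ is even on $\Omega \cup \Delta$, so $\beta'$ has the same parity as $\pi$, and hence the same parity as $\beta$. Therefore $\beta'^{-1}\beta \in A_n$. By the previous step there is $k \in K$ acting as $\beta'^{-1}\beta$ on $\Delta$ and trivially on $\Omega$, and then $gk$ acts as $(\pi,\beta)$. For $n \le 2$, $A_n$ is trivial, so $\beta' = \beta$ and we can take $k = 1$.

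The main obstacle is the normal-subgroup step, in particular the $n=4$ exception. There is also a subtle point: "acting on disjoint sets" may allow $\G$ to move points outside $\Omega \cup \Delta$. The parity argument needs $\Omega \cup \Delta$ to be the whole domain, which I will assume is intended.
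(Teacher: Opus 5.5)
Your proof is correct and follows essentially the same route as the paper: it views the subgroup acting trivially on $\Omega$ as a nontrivial normal subgroup of $\G_\Delta$ (phrased there through the comix of $\G_\Omega$ and $\G_\Delta$). It concludes that this subgroup contains $A_n$, using the $3$-cycle when $n=4$, and then corrects the $\Delta$-component by an even element exactly as you do. Your assumption that $\Omega \cup \Delta$ is the whole domain is also implicit in the paper, which writes $\G = \G_\Omega \mix \G_\Delta$.
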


\begin{proof}
The permutation group $\G$ is the mix $\G_\Omega \mix \G_\Delta$. The fact that some nontrivial $\alpha \in \G$ acts trivially on $\Omega$ means that the mix contains an element $(1, \alpha_2)$, and so the kernel $K$ of the projection from $\G_\Omega \mix \G_\Delta \to \G_\Omega$ is nontrivial. This kernel is isomorphic to the kernel of the projection from $\G_\Delta$ to $\G_\Omega \comix \G_\Delta$, and so this is nontrivial as well; in other words, $\G_\Omega \comix \G_\Delta$ is a proper quotient of $\G_\Delta$. If $n = 4$ and $\alpha$ acts as a 3-cycle on $\Delta$, then the kernel from $\G_\Delta$ to $\G_\Omega \comix \G_\Delta$ contains a 3-cycle, and then must contain $A_4$. Thus, no matter what $n$ is, $K$ contains $A_n$. 
This implies that $\G_\Omega \mix \G_\Delta$ contains every element $(1, \beta)$ with $\beta \in A_n \leq \G_\Delta$. 

Now, let $\varphi_\Omega \in \G_\Omega$ and let $\gamma \in \G_\Delta$ have the same parity as $\varphi_\Omega$.  Then there is some $\varphi = (\varphi_\Omega, \varphi_\Delta) \in \G = \G_\Omega \mix \G_\Delta$, where $\varphi_A$ is the induced action of $\varphi$ on $A$ for $A \in \{\Omega, \Delta\}$. Since $\G$ is even, $\varphi_\Omega$ and $\varphi_\Delta$ have the same parity. Then $\varphi_\Delta^{-1} \gamma$ is even, so multiplying $(\varphi_\Omega, \varphi_\Delta)$ by $(1, \varphi_\Delta^{-1} \gamma)$ produces the desired element $(\varphi_\Omega, \gamma)$.

\end{proof}

\subsection{$k$-orientability}

For each $0 \leq k \leq r-1$, the rank $r$ Coxeter Group $W_r = \langle \rho_0, \ldots, \rho_{r-1} \rangle$ has an index 2 subgroup
\[ W_r^{(k)}
=
\left\langle
\rho_j,\ \rho_k\rho_j\rho_k
\mid j\neq k
\right\rangle.
\]
Note that when $|j-k| > 1$ we have $\rho_k \rho_j \rho_k = \rho_j$. 

If $\G = W_r/M$ is an sggi, then we may similarly define the subgroup $\G^{(k)}$. The index of this subgroup in $\G$ is at most 2, and if the index is 2, then we will say that $\G$ is \emph{$k$-orientable}. This happens precisely when $M$ is contained in $W_r^{(k)}$. For more information on $k$-orientability and similar ideas, see \cite{flag-bicolorings}.

The idea of $k$-orientability is related to the simplest example of mixing, where we take a permutation representation graph for $\G$ and add a single new transposition. (This is essentially the same idea as a \emph{sesqui-extension} in \cite{FLM12}.) We will use $E_k$ to denote the permutation group $\langle \tau_0, \ldots, \tau_{r-1} \rangle$ (with the rank $r$ implicit from the context) such that $\tau_k$ is a transposition and every other $\tau_i$ is trivial. (We will usually pick $\tau_k$ to interchange two elements not moved by $\G$.) Then there are two possibilities for $\G \mix E_k$; either $\G \mix E_k \cong \G$ (which happens if and only if $\G$ covers $E_k$, so that $\G \comix E_k = E_k$) or $\G \mix E_k \cong \G \times E_k$ (which happens if and only if $\G \comix E_k$ is trivial). We then have the following result. (Compare to \cite[Lemma 5.4]{FLM12}.)

\begin{proposition}
\label{k-orientable}
Let $\G$ be an sggi with no trivial generators. Then the following are equivalent:
\begin{enumerate}
\item[(1)] $\G$ is $k$-orientable.
\item[(2)] $\rho_k \not \in \G^{(k)}$.
\item[(3)] $\G \mix E_k \cong \G$.
\end{enumerate}
\end{proposition}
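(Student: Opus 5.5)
I will prove the equivalences by working in $W_r$ and writing $\G = W_r/M$, with $\pi : W_r \to \G$ the quotient map. Since $W_r^{(k)}$ is generated by the $\rho_j$ ($j \neq k$) and their conjugates by $\rho_k$, we have $\pi(W_r^{(k)}) = \G^{(k)}$. Moreover $W_r^{(k)}$ has index exactly $2$ in $W_r$: it is the kernel of the homomorphism $W_r \to \{\pm 1\}$ sending $\rho_k \mapsto -1$ and every other $\rho_j \mapsto 1$. This map is well defined because every defining relator of $W_r$ contains $\rho_k$ an even number of times. In particular $\rho_k \notin W_r^{(k)}$. The definitions also give the chain $M \le MW_r^{(k)}$ and $\pi^{-1}(\G^{(k)}) = MW_r^{(k)}$.

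\emph{(1)$\Leftrightarrow$(2).} The subgroup $\G^{(k)}$ contains all $\rho_j$ with $j\neq k$, and it is normalized by $\rho_k$ by construction. Hence it is normal in $\G$, and $\G = \G^{(k)} \cup \rho_k\G^{(k)}$. So the index of $\G^{(k)}$ is $2$ exactly when $\rho_k \notin \G^{(k)}$. Equivalently, in $W_r$: $\rho_k \in MW_r^{(k)}$ if and only if $MW_r^{(k)} = W_r$, which holds if and only if $M \not\le W_r^{(k)}$. This matches the remark preceding the proposition.

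\emph{(1)$\Leftrightarrow$(3).} Here $E_k$ is the quotient $W_r/W_r^{(k)}$. Indeed, the action on the two points swapped by $\tau_k$ has kernel exactly the kernel of the sign map above, namely $W_r^{(k)}$. Therefore
\[ \G \mix E_k \cong W_r/(M \cap W_r^{(k)}) \quad\text{and}\quad \G \comix E_k \cong W_r/(MW_r^{(k)}). \]
The group $\G\mix E_k$ surjects onto $\G$, and the kernel of this projection is $M/(M\cap W_r^{(k)})$. So $\G \mix E_k \cong \G$ (via the natural projection, or simply by comparing orders, since everything is finite) exactly when $M \le W_r^{(k)}$, which is (1).

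For infinite $\G$ the claim "$\cong$" in (3) should be read as "the natural projection is an isomorphism". Alternatively, one can use the dichotomy stated before the proposition, that $\G\mix E_k$ is either $\G$ or $\G\times E_k$ according to whether $\G\comix E_k$ is $E_k$ or trivial. This dichotomy is exactly $MW_r^{(k)} = W_r^{(k)}$ versus $MW_r^{(k)} = W_r$.

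The only delicate point is the interpretation of "$\cong$" in (3). I will state and prove it as "the natural map is an isomorphism", which is the sense in which it is used. The hypothesis of no trivial generators is not really needed beyond guaranteeing that $\rho_k \neq 1$, so that (2) is meaningful.
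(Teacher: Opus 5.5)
Your proof is correct and is essentially the paper's argument. Part (1)$\Leftrightarrow$(2) is the same index-at-most-$2$ observation. For (1)$\Leftrightarrow$(3), your kernel $M/(M\cap W_r^{(k)})$ of $\G\mix E_k\to\G$ is isomorphic, by the Second Isomorphism Theorem, to the kernel $MW_r^{(k)}/W_r^{(k)}$ of $E_k\to\G\comix E_k$, which is exactly the paper's dichotomy ``$\G\comix E_k$ is $E_k$ or trivial.''

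One small correction to your closing remark: the equivalence also holds when $\rho_k=1$, since then all three conditions fail, so the no-trivial-generators hypothesis is not actually needed.
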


\begin{proof}
Let $\G = \langle \rho_0, \ldots, \rho_{r-1} \rangle$. It is clear that $\G^{(k)} = \G$ if and only if $\rho_k \in \G^{(k)}$, which shows that (1) and (2) are equivalent.

Now, suppose $\G$ is $k$-orientable. Then $\G^{(k)}$ has index 2 in $\G$, and the quotient is precisely $E_k$. It follows that $\G \comix E_k \cong E_k$, and so $\G \mix E_k \cong \G$. Conversely, if $\G$ is not $k$-orientable, then $\G \comix E_k \not \cong E_k$. Then $\G \comix E_k$ must be trivial, which implies $\G \mix E_k \cong \G \times E_k$. Thus (1) and (3) are equivalent. 
\end{proof}

\begin{corollary}
\label{k-mix}
$\G \mix E_k$ is $k$-orientable.
\end{corollary}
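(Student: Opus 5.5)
The corollary should follow from \cref{k-orientable} once we check its hypothesis, so the plan is to apply that proposition to the sggi $\Lambda = \G \mix E_k$ itself. Write $\Lambda = \langle \rho_0', \ldots, \rho_{r-1}' \rangle$, where $\rho_i' = (\rho_i, \tau_i)$. I will argue directly with statement (3) of the proposition. That is, I will show $\Lambda \mix E_k \cong \Lambda$, and then conclude from the equivalence of (1) and (3) that $\Lambda$ is $k$-orientable.

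The key step is associativity of the mix. We have $\Lambda \mix E_k = (\G \mix E_k) \mix E_k = \G \mix (E_k \mix E_k)$. Moreover $E_k \mix E_k \cong E_k$, because two copies of the same connected component in a permutation representation graph may be discarded, as noted in the discussion of mixes. In kernel language, with $\Lambda = W_r/(M_\G \cap M_{E_k})$, we get $\Lambda \mix E_k = W_r/(M_\G \cap M_{E_k} \cap M_{E_k}) = W_r/(M_\G\cap M_{E_k}) = \Lambda$. This identification respects the distinguished generators. Hence (3) holds for $\Lambda$.

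The only obstacle is the hypothesis in \cref{k-orientable} that $\Lambda$ has no trivial generators. For $i \neq k$, $\rho_i' = (\rho_i, 1)$ is nontrivial provided $\rho_i$ is, and $\rho_k' = (\rho_k, \tau_k)$ is always nontrivial. So the hypothesis holds whenever $\G$ has no trivial generators, which is the standing assumption under which $k$-orientability is being discussed.

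To avoid relying on that assumption, I would instead argue directly from the definition. This route works even if some generators are trivial. Since $\Lambda \mix E_k = \Lambda$, the comix $\Lambda \comix E_k$ equals $W_r/(M_\Lambda M_{E_k})$, and $M_\Lambda \le M_{E_k}$ forces this to be $W_r/M_{E_k} = E_k$. Therefore $M_\Lambda \le M_{E_k} = W_r^{(k)}$, where the last equality holds because $W_r^{(k)}$ is exactly the kernel of $W_r \to E_k$. This containment is precisely the definition of $k$-orientability.
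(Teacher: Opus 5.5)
Your argument is correct and is the paper's proof: associativity gives $(\G \mix E_k)\mix E_k \cong \G \mix (E_k \mix E_k) \cong \G \mix E_k$, and condition (3) of \cref{k-orientable} then yields $k$-orientability. Your fallback kernel argument is also valid, and shorter than you make it, since $M_\G \cap M_{E_k} \le M_{E_k} = W_r^{(k)}$ follows directly from the definition of the mix without the detour through the comix.
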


\begin{proof}
This follows from \cref{k-orientable} since $(\G \mix E_k) \mix E_k \cong \G \mix (E_k \mix E_k) \cong \G \mix E_k$.
\end{proof}

Many of our results will require certain permutation groups to be even. Here we see that if there is a single odd generator, then we can mix with an edge to make the group even without changing the (abstract) group.

\begin{proposition}
\label{mix-is-iso}
Suppose $\G$ is a permutation group such that $\rho_k$ is the only generator that is odd. Then $\G \mix E_k \cong \G$ and $\G$ is $k$-orientable.
\end{proposition}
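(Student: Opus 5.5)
The plan is to pass to the sign homomorphism and then use \cref{k-orientable}. The map $\operatorname{sgn}\colon \G \to \{\pm 1\}$ is a homomorphism. By hypothesis it sends $\rho_k$ to $-1$ and every $\rho_j$ with $j \neq k$ to $+1$. Two consequences follow at once: $\rho_k \neq 1$, and $\rho_k$ is not equal to any other generator.

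The main step is to show that $\G^{(k)} = \langle \rho_j, \rho_k\rho_j\rho_k \mid j \neq k\rangle$ lies in $\ker(\operatorname{sgn}) = \G^+$. Each $\rho_j$ with $j \neq k$ is even. Each conjugate $\rho_k\rho_j\rho_k$ has the same parity as $\rho_j$, so it is even as well. Hence every generator of $\G^{(k)}$ is even, and $\G^{(k)} \le \G^+$.

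Since $\rho_k$ is odd, $\rho_k \notin \G^+$, and therefore $\rho_k \notin \G^{(k)}$. So $\G^{(k)}$ is a proper subgroup of $\G$. Because its index is always at most $2$, it has index exactly $2$, which says $\G$ is $k$-orientable. (In fact $\G^{(k)} = \G^+$.)

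To obtain $\G \mix E_k \cong \G$, I will not route through the equivalence in \cref{k-orientable}, since that proposition assumes no generator is trivial. I will argue directly with the comix instead. $\G$ surjects onto $E_k \cong \{\pm1\}$ via $\operatorname{sgn}$, with $\rho_k \mapsto$ the generator and $\rho_j \mapsto 1$ for $j \neq k$. This means $\G$ covers $E_k$, so $\G \comix E_k = E_k$. The index of the mix in $\G \times E_k$ is then $|\G \comix E_k| = 2$, which gives $|\G \mix E_k| = |\G|$.

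Concretely, the projection $\G \mix E_k \to \G$ is injective: an element $(g, \varepsilon)$ of the mix always satisfies $\varepsilon = \operatorname{sgn}(g)$, since this holds on generators and both coordinates are multiplicative. Hence $\G \mix E_k \cong \G$.

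I expect no real obstacle. The only care needed is that the definition of $k$-orientability, and the equivalence in \cref{k-orientable}, quietly assume the generators are nontrivial; the sign argument gives everything directly without that assumption.
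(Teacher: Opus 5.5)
Your proof is correct and rests on the same key observation as the paper's: every generator $\rho_j$ and $\rho_k\rho_j\rho_k$ ($j\neq k$) of $\G^{(k)}$ is even while $\rho_k$ is odd, so $\rho_k\notin\G^{(k)}$. The paper then simply invokes \cref{k-orientable} to get both conclusions. You instead verify $\G\mix E_k\cong\G$ directly via the sign homomorphism, which is slightly more self-contained and avoids leaning on that proposition's hypothesis that no generator is trivial.
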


\begin{proof}
By \cref{k-orientable}, it suffices to show that $\rho_k \not \in \G^{(k)}$. The generators of $\G^{(k)}$ are all $\rho_j$ and $\rho_k \rho_j \rho_k$ with $j \neq k$, and by assumption, these are all even permutations. Thus, since $\rho_k$ is odd, $\rho_k \not \in \G^{(k)}$ and the result follows.
\end{proof}

Next we describe a simple method for demonstrating that a permutation group is $k$-orientable.

\begin{proposition}
\label{k-coloring}
Suppose that $\G$ has a permutation representation graph whose vertices can be colored black and white such that $\rho_k$ does not fix any vertices and connects black vertices to white ones, while every other $\rho_j$ connects vertices of the same color. Then $\G$ is $k$-orientable.
\end{proposition}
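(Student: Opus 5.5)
The plan is to exhibit an explicit homomorphism from $\G$ onto a group of order $2$ and then use the equivalence of (1) and (2) in \cref{k-orientable}. Let $B$ and $W$ be the sets of black and white vertices. Define $\sigma \colon \G \to \{\pm 1\}$ by $\sigma(g) = 1$ if $g$ preserves the partition $\{B, W\}$ setwise colour-by-colour, and $\sigma(g) = -1$ if $g$ maps $B$ to $W$ and $W$ to $B$.

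\begin{enumerate}
\item[(a)] \emph{Every element of $\G$ either fixes both colour classes or swaps them.} Each generator does so. By hypothesis, $\rho_j$ with $j \neq k$ joins only vertices of the same colour, so it fixes $B$ and $W$ setwise. The generator $\rho_k$ fixes no vertex and sends each black vertex to a white one and each white vertex to a black one, so it swaps $B$ and $W$. The set of permutations that either preserve or swap the partition is a subgroup, so every $g \in \G$ lies in it.
\item[(b)] \emph{$\sigma$ is a well-defined homomorphism onto $\{\pm 1\}$.} Since $\rho_k$ has no fixed points, both colour classes are nonempty: $\rho_k$ moves some vertex, and it carries that vertex to the opposite colour. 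Hence "preserve" and "swap" are mutually exclusive, and $\sigma$ is well defined. It is multiplicative by construction.
\item[(c)] \emph{$\G^{(k)} \leq \ker \sigma$.} We have $\sigma(\rho_j) = 1$ for $j \neq k$, and $\sigma(\rho_k \rho_j \rho_k) = (-1)\cdot 1 \cdot (-1) = 1$. So every generator of $\G^{(k)}$ lies in the kernel.
\item[(d)] \emph{Conclusion.} Since $\sigma(\rho_k) = -1$, we get $\rho_k \notin \G^{(k)}$. By \cref{k-orientable}, $\G$ is $k$-orientable.
\end{enumerate}

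\cref{k-orientable} assumes $\G$ has no trivial generators. This matters only for (2)$\Leftrightarrow$(3), and that hypothesis is not needed here. Alternatively, one can argue directly: $\G^{(k)}$ lies in a proper subgroup, namely the index-$2$ kernel of $\sigma$, so $[\G : \G^{(k)}] = 2$ because the index is at most $2$.

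The only delicate point is step (b): $\rho_k$ must actually interchange the two colour classes rather than merely map some black vertices to white ones. This is exactly why the hypothesis requires that $\rho_k$ fix no vertices. Without it, a black vertex fixed by $\rho_k$ would give an element that neither preserves nor swaps the colouring, and $\sigma$ would not be defined.
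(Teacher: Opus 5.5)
Your proof is correct and follows the paper's argument. The paper likewise observes that every generator of $\G^{(k)}$ preserves the colour classes while $\rho_k$ reverses them, so $\rho_k \notin \G^{(k)}$ and \cref{k-orientable} applies; your homomorphism $\sigma$ makes explicit the step the paper leaves implicit, namely that the fixed-point-free hypothesis on $\rho_k$ is what ensures each $\rho_k\rho_j\rho_k$ also preserves colours.
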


\begin{proof}
Given such a coloring, we see that $\G^{(k)}$ preserves the color of every vertex. Since $\rho_k$ reverses the color, $\rho_k \not \in \G^{(k)}$ and thus $\G$ is $k$-orientable by \cref{k-orientable}.
\end{proof}

We often want to know when $\G \mix E_k$ is a string C-group. We start with the following well-known result.

\begin{lemma}[Proposition 5.3 of \cite{FLM12}] \label{mix-edge}
If $\G$ is a string C-group of rank $r$, then $\G \mix E_0$ and $\G \mix E_{r-1}$ are string C-groups.
\end{lemma}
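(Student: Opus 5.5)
This lemma is cited from \cite{FLM12} and is only restated here, but I would prove it directly from the intersection condition. By duality it suffices to treat $\G \mix E_0$. Write $\G = \langle \rho_0, \ldots, \rho_{r-1}\rangle$ acting on $\Omega$, and let $\tau$ be a transposition on two new points. Set $\Lambda = \G \mix E_0 = \langle \rho_0\tau, \rho_1, \ldots, \rho_{r-1}\rangle$. Each element of $\Lambda$ is a pair $(g, \epsilon)$ with $g \in \G$ and $\epsilon \in \langle \tau \rangle$. Here $\epsilon$ equals $\tau^{e}$, where $e$ is the parity of the number of occurrences of $\rho_0$ in any word representing the element.

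If $\G$ is $0$-orientable, then $\Lambda \cong \G$ via the projection, by \cref{k-orientable}. This isomorphism sends distinguished generators to distinguished generators, so $\Lambda$ is string C. Assume therefore that $\G$ is not $0$-orientable, so that $\Lambda \cong \G \times E_0$.

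In that case I would verify the intersection condition directly for subgroups $\Lambda_I = \langle \lambda_i : i \in I\rangle$, where $\lambda_0 = \rho_0\tau$ and $\lambda_i = \rho_i$ for $i \geq 1$. The key observation concerns index sets $I$ that do not contain $0$. For such $I$, the subgroup $\Lambda_I$ is just $\G_I \times 1$, because the generators fix the new points.

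For index sets $I$ containing $0$, $\Lambda_I$ projects onto $\G_I$. Its kernel is either trivial or $1 \times \langle\tau\rangle$, depending on whether $\G_I$ is $0$-orientable.

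Now take an element $(g,\epsilon) \in \Lambda_I \cap \Lambda_J$. Its first coordinate $g$ lies in $\G_I \cap \G_J = \G_{I\cap J}$, since $\G$ is string C. There are two cases.

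\emph{Case $0 \notin I$ (or symmetrically $0 \notin J$).} Then $\epsilon = 1$, and $g \in \G_{I \cap J}$ with $0 \notin I \cap J$. So $(g,1)$ lies in $\Lambda_{I\cap J} = \G_{I\cap J} \times 1$.

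\emph{Case $0 \in I \cap J$.} Pick a word $w$ in the generators indexed by $I \cap J$ representing $g$. The corresponding element of $\Lambda_{I \cap J}$ is $(g, \epsilon')$. If $\epsilon' = \epsilon$, we are done. Otherwise, $(1,\tau) \in \Lambda_I$, so $\G_I$ is not $0$-orientable, and the same holds for $J$. We then need $(1,\tau) \in \Lambda_{I\cap J}$, that is, that $\G_{I\cap J}$ is also not $0$-orientable.

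This last step is the main obstacle. From $(g,\epsilon)(g,\epsilon')^{-1} = (1,\tau) \in \Lambda_I \cap \Lambda_J$, we learn that $\G_I$ and $\G_J$ both contain an odd-$\rho_0$-count word representing the identity. However, we need such a word inside $\G_{I\cap J}$.

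The plan is to reduce to the relevant cases using the string structure. With $0 \in I \cap J$, I would use \cite[Prop.~2E16]{ARP} together with induction on rank. It suffices to check the case $I = \{0,\ldots,r-2\}$ and $J = \{0,\ldots,\widehat{k},\ldots\}$; more generally, the ranges $\{0,\ldots,j\}$ versus all indices.

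The key tool is the following claim: for $0 \in K$, the group $\G_K$ is $0$-orientable exactly when the connected component of $K$ containing $0$, namely $\{0,\ldots,j\}$, gives a $0$-orientable $\G_{\{0..j\}}$. The generators with indices beyond $j+1$ commute with those in the $0$-component and do not matter. More precisely, $\G_K$ is a quotient of a direct product by central identifications, and the intersection property of $\G$ lets us separate the $0$-component.

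With this claim, the case $0 \in I \cap J$ reduces to initial segments. In that case $I \cap J$ is the smaller of the two initial segments, which is the $0$-component of $I$ or of $J$. Non-orientability therefore passes directly to it, and the proof is complete.
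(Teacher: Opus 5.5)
Your argument is correct, but it takes a longer road than necessary. The paper gives no proof here beyond citing \cite{FLM12}. The standard argument, and the one the paper itself uses for \cref{mix-ek-string-c}, is the quotient criterion \cite[Proposition 2E17]{ARP} in its dual form. The natural projection $\G \mix E_0 \to \G$ sends distinguished generators to distinguished generators, and it is injective on $(\G \mix E_0)_0 = \langle \rho_1, \ldots, \rho_{r-1} \rangle$ because these generators fix the two new points (dually for $E_{r-1}$).

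Equivalently, suppose you use \cite[Proposition 2E16]{ARP} with induction on rank, as you mention in passing but never carry through. Then $\Lambda_{r-1} = \G_{r-1} \mix E_0$ is string C by induction, and $\Lambda_0 = \G_0 \times 1$ is string C. The one intersection left to check, $\Lambda_0 \cap \Lambda_{r-1}$, falls squarely into your easy case $0 \notin I$. So the orientability question you call the main obstacle never comes up.

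Because you verify the intersection condition for every pair $(I,J)$, you are forced into the case $0 \in I \cap J$, and you handle it correctly. Let $K_0 = \{0, \ldots, a\}$ be the initial segment of $K$. Then $\G_{K_0}$ and $\G_{K \setminus K_0}$ commute elementwise and meet in $\G_{\emptyset} = 1$ by the intersection property of $\G$. So $\G_K = \G_{K_0} \times \G_{K \setminus K_0}$, and any $\rho_0$-parity character on $\G_{K_0}$ extends trivially across the second factor. Hence $0$-orientability of $\G_K$ depends only on $K_0$, and $(I \cap J)_0 = I_0 \cap J_0$ equals $I_0$ or $J_0$.

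You should state that direct-product step explicitly instead of the vague ``quotient by central identifications.'' You should also drop the sentence claiming it suffices to compare $\{0, \ldots, r-2\}$ with $\{0, \ldots, \widehat{k}, \ldots\}$. That is not what 2E16 says, and it plays no role in your proof.

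What your route buys is self-containment and an explicit description of which parabolic subgroups $\Lambda_I$ contain $(1,\tau)$. What the cited route buys is that all of this case analysis disappears.
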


We can also give some conditions for when $\G \mix E_k$ is a string C-group for other values of $k$.

\begin{proposition}
\label{mix-ek-string-c}
Suppose that $\G$ is a string C-group of rank $r$ and let $1 \leq k \leq r-2$. Then the mix $\G \mix E_k$ is a string C-group if any of the following are true:
\begin{enumerate}
\item $\G$ is $k$-orientable.
\item $\G_{r-1}$ is $k$-orientable.
\item $\G_0$ is $(k-1)$-orientable (after decreasing indices by 1).
\end{enumerate}
\end{proposition}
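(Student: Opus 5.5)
The plan is to write $\Lambda = \G \mix E_k = \langle \lambda_0, \ldots, \lambda_{r-1} \rangle$ with $\lambda_i = (\rho_i, \tau_i)$, and to prove all three cases simultaneously by induction on $r$. Case 3 is the dual of case 2: replace $\G$ by $\G^*$ and $k$ by $r-1-k$, noting that $(\G \mix E_k)^* = \G^* \mix E_{r-1-k}$. So I only need to treat cases 1 and 2.

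\emph{Case 1.} If $\G$ is $k$-orientable, then \cref{k-orientable} gives $\G \mix E_k \cong \G$. More precisely, $\G$ covers $E_k$, so the projection $\Lambda \to \G$ is an isomorphism sending $\lambda_i$ to $\rho_i$. The intersection condition is a statement about subgroups generated by the distinguished generators, so it transfers along this isomorphism, and $\Lambda$ is string C.

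\emph{Case 2.} Assume $\G_{r-1}$ is $k$-orientable and that $\G$ is not. I will use the criterion \cite[Prop. 2E16]{ARP}: it suffices to show that $\Lambda_0$ and $\Lambda_{r-1}$ are string C and that $\Lambda_0 \cap \Lambda_{r-1} = \Lambda_{0,r-1}$.

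First, $\Lambda_{r-1} = \G_{r-1} \mix E_k$, and this is isomorphic to $\G_{r-1}$ via the projection by \cref{k-orientable}. It is therefore string C, exactly as in case 1.

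Second, $\Lambda_0 = \G_0 \mix E_{k-1}$, with indices shifted down by one. If $k = 1$, then $\Lambda_0$ is string C by \cref{mix-edge}. If $k \geq 2$, I apply the induction hypothesis (case 2) to the rank $r-1$ string C-group $\G_0$ and index $k-1$, which satisfies $1 \leq k-1 \leq (r-1)-2$. This needs the facet $\G_{0,r-1}$ of $\G_0$ to be $(k-1)$-orientable in the shifted indexing. The following small lemma gives that.

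\emph{Lemma.} If $H = \langle \rho_i \mid i \in I\rangle$ is a subgroup with $k \in I$ and no trivial generators, and $\G_{r-1}$ is $k$-orientable with $H \le \G_{r-1}$, then $H$ is $k$-orientable. The reason is that $H^{(k)} \le \G_{r-1}^{(k)}$, and $\rho_k \notin \G_{r-1}^{(k)}$ by \cref{k-orientable}(2). So $\rho_k \notin H^{(k)}$, and $H$ is $k$-orientable by \cref{k-orientable} again.

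Third, the intersection. Since $\G$ is not $k$-orientable, \cref{k-orientable} gives $\Lambda \cong \G \times E_k$, so elements of $\Lambda$ are pairs $(g, \varepsilon)$. Take $(g,\varepsilon) \in \Lambda_0 \cap \Lambda_{r-1}$. Then $g \in \G_0 \cap \G_{r-1} = \G_{0,r-1}$, using that $\G$ is string C. Hence some element $(g, \varepsilon') \in \Lambda_{0,r-1}$ exists. Both $(g,\varepsilon)$ and $(g,\varepsilon')$ lie in $\Lambda_{r-1}$, and $\Lambda_{r-1}$ projects injectively onto $\G_{r-1}$, so $\varepsilon = \varepsilon'$. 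Therefore $(g,\varepsilon) \in \Lambda_{0,r-1}$. The reverse inclusion is trivial, so $\Lambda_0 \cap \Lambda_{r-1} = \Lambda_{0,r-1}$.

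The step I expect to need the most care is the induction bookkeeping for $\Lambda_0$. One must check that the shifted index stays in the allowed range (the boundary case $k=1$ is handled by \cref{mix-edge}). One must also check that $k$-orientability really passes to the subgroup $\G_{0,r-1}$, including the case $k = r-2$, where $k$ is the last index of that facet.
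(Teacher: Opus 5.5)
Your proof is correct, but it takes a longer route than the paper. The paper handles case 2 in one step with the quotient criterion \cite[Proposition 2E17]{ARP}. That criterion says an sggi is string C if it maps onto a string C-group by a generator-preserving homomorphism that is injective on one facet subgroup. Since $\G_{r-1}$ is $k$-orientable, the projection $\G \mix E_k \to \G$ is injective on $(\G \mix E_k)_{r-1}$, and nothing more is needed; case 3 is the dual.

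You instead work from \cite[Prop.~2E16]{ARP}, which forces you to prove separately that $\Lambda_0$ is string C. You do this by induction on the rank, using two ingredients:
\begin{enumerate}
\item your lemma that $k$-orientability of $\G_{r-1}$ passes to $\G_{0,r-1}$;
\item \cref{mix-edge} for the base case $k=1$.
\end{enumerate}
Your intersection step is then exactly the core of the quotient criterion's own proof. In effect you have re-proved Proposition 2E17 for this special case, with inheritance of $k$-orientability standing in for inheritance of injectivity to $\Lambda_{0,r-1}$.

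The paper's route is shorter and shows that injectivity on a single facet subgroup is the only real input. Yours is more self-contained: it uses only 2E16, which the paper already uses elsewhere, plus \cref{mix-edge}, and it makes the orientability bookkeeping explicit.

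One small precision: state your lemma with $I \subseteq \{0,\ldots,r-2\}$, so that the generators $\rho_j$ and $\rho_k\rho_j\rho_k$ of $H^{(k)}$ are actually generators of $\G_{r-1}^{(k)}$. This holds in your application, where $I=\{1,\ldots,r-2\}$.
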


\begin{proof}
This follows directly from \cite[Proposition 6.13]{poly-mix}, but let us give a more self-contained proof here. If $\G$ is $k$-orientable, then $\G \mix E_k \cong \G$, which is string C. Otherwise, $\G \mix E_k \cong \G \times E_k$. The subgroup $(\G \mix E_k)_{r-1}$ is isomorphic to $\G_{r-1} \mix E_k$, and if $\G_{r-1}$ is $k$-orientable, then this is isomorphic to $\G_{r-1}$. Then the natural projection from $\G \mix E_k$ to $\G$ is one-to-one when restricted to $(\G \mix E_k)_{r-1}$, and so by \cite[Proposition 2E17]{ARP}, $\G \mix E_k$ is string-C. The last point follows from a dual argument.
\end{proof}

\begin{lemma}[Lemma 6.3 of \cite{FLM12}] 
\label{wreath}
For each $r \geq 3$, the group of the following permutation representation graph is a string C-group isomorphic to $2 \wr S_{r+1}$.

\[
\xymatrix@C=1.8pc@R=2.5pc{
*+[o][F]{} \ar@{-}[r]^0 &
*+[o][F]{} \ar@{-}[r]^1 &
*+[o][F]{} \ar@{-}[r]^2 &
*+[o][F]{} \ar@{-}[r]^3 &
*+[o][F]{} \ar@{.}[r] &
*+[o][F]{} \ar@{-}[r]^{r-2} &
*+[o][F]{} \ar@{-}[r]^{r-1} &
*+[o][F]{} \ar@{-}[d]^{r-2}
\\
*+[o][F]{} \ar@{-}[r]_0 &
*+[o][F]{} \ar@{-}[r]_1 &
*+[o][F]{} \ar@{-}[r]_2 &
*+[o][F]{} \ar@{-}[r]_3 &
*+[o][F]{} \ar@{.}[r] &
*+[o][F]{} \ar@{-}[r]_{r-2} &
*+[o][F]{} \ar@{-}[r]_{r-1} &
*+[o][F]{}
}
\]

\end{lemma}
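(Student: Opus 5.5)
We must prove Lemma~\ref{wreath}: the group $\G=\langle\rho_0,\ldots,\rho_{r-1}\rangle$ of the displayed graph on $2(r+1)$ points is string C and isomorphic to $2\wr S_{r+1}$. The plan has two parts: identify the group via a block system, then verify the intersection condition by induction on $r$ using \cref{max}.

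\textbf{Identifying the group.} Label the top vertices $a_0,\ldots,a_r$ and the bottom vertices $b_0,\ldots,b_r$, so that $\rho_i$ swaps $a_i\leftrightarrow a_{i+1}$ and $b_i\leftrightarrow b_{i+1}$. For $i=r-2$ the generator $\rho_{r-2}$ also has the vertical edge. Reading the picture, the vertical edge joins $a_r$ and $b_r$, but $a_{r-1}$ already carries an $(r-2)$-edge to $a_{r-2}$. So I would take the intended graph to be a single path of length $2r+1$ whose labels read $0,1,\ldots,r-1,r-2,r-1,\ldots,1,0$ along it. In that reading the vertical edge has label $r-2$, and the edges adjacent to it on the path have label $r-1$.

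The pairs $\{a_i,b_i\}$, i.e.\ the vertices at mirror-symmetric positions along the path, form a block system with $r+1$ blocks. Every generator preserves this system: the $\rho_i$ with $i<r-2$ act diagonally, $\rho_{r-1}$ swaps blocks $r-1$ and $r$, and $\rho_{r-2}$ acts diagonally on blocks $r-2,r-1$ and also flips block $r$. Therefore $\G\le 2\wr S_{r+1}$.

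The induced action on blocks is generated by the adjacent transpositions $(i,i+1)$, so it is all of $S_{r+1}$. The element $\rho_{r-2}\,(\rho_{r-1}\rho_{r-2}\rho_{r-1})$ is the product of the block-$(r-2)$ diagonal with the flip of block $r$, after conjugation. A short calculation, or taking a suitable power such as $(\rho_{r-2}\rho_{r-1})^k$, produces a single block flip in the base group $2^{r+1}$. Conjugating this flip by the transitive block action yields all single flips, so the base group is contained in $\G$. Hence $\G=2\wr S_{r+1}$, of order $2^{r+1}(r+1)!$.

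\textbf{Intersection condition.} I would use \cref{max} and induct on $r$, checking the base case $r=3$ by hand or by computer.

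For $\G_{r-1}=\langle\rho_0,\ldots,\rho_{r-2}\rangle$, the graph loses its $(r-1)$-edges. Its orbits are then two paths $a_0\ldots a_{r-1}$ and $b_0\ldots b_{r-1}$, together with the edge $a_r b_r$ labelled $r-2$. The first two orbits give the same action, so this is $S_r\mix E_{r-2}$ acting on the path. Since $S_r$ with Coxeter generators is string C, \cref{mix-edge} shows that $\G_{r-1}$ is string C, and $\G_{r-1}\cong S_r\times 2$.

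For $\G_0=\langle\rho_1,\ldots,\rho_{r-1}\rangle$, deleting the $0$-edges isolates $a_0$ and $b_0$. What remains is exactly the graph of the lemma at rank $r-1$, with labels shifted down by one, so by induction $\G_0$ is string C and isomorphic to $2\wr S_r$.

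Next we need $\rho_0\notin\G_0$. This holds because $\G_0$ fixes $a_0$ while $\rho_0$ moves it.

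The remaining condition is that $\G_{0,r-1}=\langle\rho_1,\ldots,\rho_{r-2}\rangle$ is maximal in $\G_{r-1}$. This is the step I expect to be the main obstacle. Here $\G_{0,r-1}\cong S_{r-1}\times 2$: the factor $S_{r-1}$ acts on points $1,\ldots,r-1$ of the paths, and the factor $2$ comes from the $r-2$ edge together with the mix. Inside $\G_{r-1}\cong S_r\times 2$ this subgroup is $S_{r-1}\times 2$, the stabilizer of a point in the first factor times the full second factor. It is therefore maximal, since $S_{r-1}$ is maximal in $S_r$.

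To justify the identification, I would confirm that $\G_{0,r-1}$ contains the central flip $(a_r b_r)$ while it lies inside $\G_{r-1}$. Concretely, $\G_{0,r-1}$ is a mix of $S_{r-1}$ with $E_{r-2}$, and one checks that it is not $(r-2)$-orientable. If instead it were isomorphic to $S_{r-1}$ (the orientable case), the subgroup would be a diagonal-type subgroup, and maximality would have to be argued separately via its index $2r$ together with the transitive action of $S_r\times 2$ on cosets. With maximality established, \cref{max} finishes the induction.
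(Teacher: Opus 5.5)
Your proof of the intersection property is correct. It is the same induction via \cref{max} that the paper runs for \cref{Y/2}, \cref{Gbt-base-case} and the $t=3$ case of \cref{thm:boxes}; the paper itself does not prove \cref{wreath}, but cites it from \cite{FLM12}.

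\begin{itemize}
\item Your hedge about orientability can be dropped. For $r\geq 4$ both $\rho_{r-3}$ and $\rho_{r-2}$ lie in $\G_{0,r-1}$, and $(\rho_{r-3}\rho_{r-2})^3=(a_r\,b_r)$. So $\G_{0,r-1}$ is a point stabilizer times the central factor of $\G_{r-1}\cong S_r\times 2$, hence maximal.
\item The same computation shows why the base case must be done directly. For $r=3$ we have $\langle\rho_1\rangle<\langle\rho_1,(a_3\,b_3)\rangle<\G_2$, so maximality fails. The direct check $\G_2\cap\G_0=\langle\rho_1\rangle$ does succeed.
\item There is no conflict in the picture. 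The vertical edge sits at $a_r$, whose only other edge has label $r-1$, so your path reading is just the graph as drawn.
\end{itemize}

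The one step that is wrong as written is the production of a single block flip. The element $\rho_{r-2}\rho_{r-1}$ is the $6$-cycle $(a_{r-2}\,a_r\,b_{r-1}\,b_{r-2}\,b_r\,a_{r-1})$. So the only nontrivial power of it in the base group is
$(\rho_{r-2}\rho_{r-1})^3=(a_{r-2}\,b_{r-2})(a_{r-1}\,b_{r-1})(a_r\,b_r)$,
which flips three blocks at once, not one. Likewise $\rho_{r-2}(\rho_{r-1}\rho_{r-2}\rho_{r-1})=(\rho_{r-2}\rho_{r-1})^2$ induces a $3$-cycle on the blocks, so it is not in the base group at all.

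The fix is short. Write $f_S$ for the element flipping exactly the blocks indexed by $S$.
\begin{itemize}
\item Conjugating $f_{\{r-2,r-1,r\}}$ by the block action $S_{r+1}$ gives every triple flip.
\item Then $f_{\{0,1,2\}}f_{\{1,2,3\}}=f_{\{0,3\}}$ gives all double flips by conjugation.
\item Finally $f_{\{0,1,2\}}f_{\{1,2\}}=f_{\{0\}}$ gives a single flip, and your conjugation argument takes over from there.
\end{itemize}
This needs at least four blocks, which holds because $r\geq 3$. The error does not affect the string C part, since your induction only uses that $\G_0$ is string C, not its isomorphism type.
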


\section{A vertex gluing operation} \label{sec:glue}
\subsection{Description of the operation} \label{sec:glue-op}
Suppose that $\G = \langle \rho_0 , \ldots \rho_{r_1-1} \rangle $ is a rank $r_1$ sggi, represented as a permutation group of degree $n_1$ and that $G_1$ is the permutation representation graph for this representation of $\G$.  
Similarly suppose that $\Lambda = \langle \alpha_0 , \ldots \alpha_{r_2-1} \rangle $ is a rank $r_2$ sggi, represented as a permutation group of degree $n_2$ and that $G_2$ is the permutation representation graph for this representation of $\Lambda$.  

If vertex $n_1$ in $G_1$ has degree 1 and is incident only to an edge of label $r_1-1$, and vertex $1$ in $G_2$ has degree 1 and is incident only to an edge of label $0$, then we can construct a rank $r_1+r_2$ sggi $\langle \phi_0, \ldots \phi_{r_1+r_2-1} \rangle $ with degree $n_1+n_2-1$ from $G_1$ and $G_2$ by ``gluing" vertices $n_1$ from $G_1$ and $1$ from $G_2$ as follows.

If $0 \le i < r_1$ then $\phi_i = \rho_i$.  Otherwise, when $r_1 \leq i < r_1+r_2$, we define $(n_1+x-1 ) \phi_i = (n_1+y-1)$ if and only if $x \alpha_{i-r_1}=y$.
This shifts the action of each $\alpha_j$ so that the point $1$ in $G_2$ becomes identified with point $n_1$ in $G_1$, and all other points from $G_2$ are reindexed accordingly.  We will say that two permutation groups $\G$ and $\Lambda$ can be glued together in this way, or equivalently that two permutation representation graphs $G_1$ and $G_2$ can be glued together.

For example, Figure~\ref{glue-ex} shows how this operation works by gluing the automorphism group of the 3-cube (acting on its facets) to the automorphism group of a 4-simplex (acting on its vertices).  The base facet of the cube is labeled by the point $n_1$, and the base vertex of the simplex is labeled by the point $1$.  The permutation representation graphs of both the cube and the simplex are shown above the new glued permutation representation graph.

\begin{figure}[h]
$$\includegraphics[width=10cm]{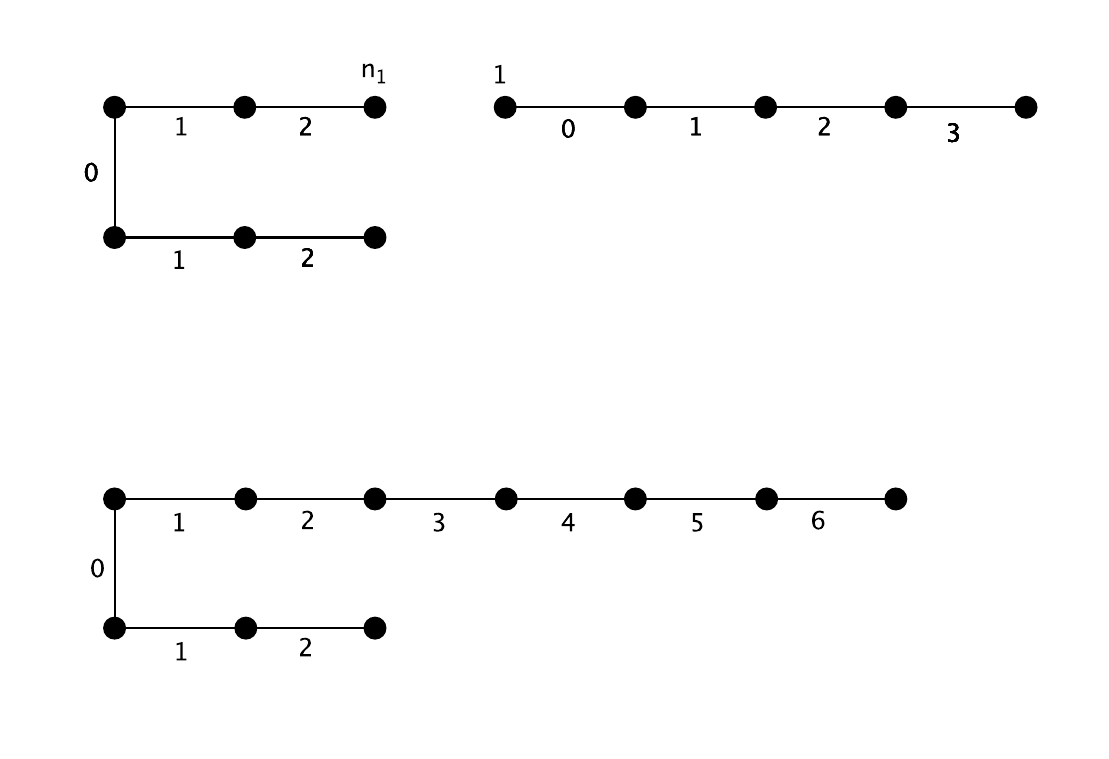}$$
\caption{A new sggi formed from a 3-cube and a 4-simplex \label{glue-ex}}
\end{figure}

Assume that as above $\G = \langle \rho_0 , \ldots \rho_{r_1-1} \rangle $ is a rank $r_1$ sggi, represented as a permutation group of degree $n_1$, and vertex $n_1$ in $G_1$ has degree 1 and is incident only to an edge of label $r_1-1$.  Then the dual of $\G$ will be an sggi that can be ``glued" to $\G $ as just described (after relabeling the points appropriately).  The result will be a self-dual sggi of rank $r=2r_1$ for a permutation group of degree $n=2n_1-1$.

\subsection{When is the result a string C-group?}
Our goal now is to consider sggis that can be decomposed into two pieces, and determine whether they are string C-groups. We will not initially assume that these sggis are the result of the gluing operation described above, but we will immediately prove that this indeed must be the case. We formalize the conditions we are considering in \cref{glued-pr} and  \cref{glued-cpr}.

\begin{hypothesis}
\label{glued-pr}
$\G$ is an sggi of rank $r$ with a permutation representation graph $G$ that consists of 
subgraphs $H_1$ and $H_2$ with at least two vertices each, sharing a single vertex $v$, where the edge labels in $H_1$ are between $0$ and $m-1$, and the edge labels in $H_2$ are between $m$ and $r-1$ (with $m < r$). Furthermore, $v$ is adjacent to at least one other vertex in $H_1$ and at least one other vertex in $H_2$.
\end{hypothesis}

\begin{proposition}
\label{v-edges}
If $\G$ satisfies \cref{glued-pr}, then $v$ is incident to exactly two edges, one of label $m-1$ and one of label $m$.
\end{proposition}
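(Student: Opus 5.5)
We use only the sggi relations: every generator is an involution and $(\rho_i\rho_j)^2=1$ whenever $|i-j|\ge 2$. The idea is that the edge of $v$ in $H_1$ and the edge of $v$ in $H_2$ force the labels at $v$ to be adjacent integers, and since $H_1$ uses only labels at most $m-1$ while $H_2$ uses only labels at least $m$, those labels must be exactly $m-1$ and $m$.

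\textbf{Step 1 (a commuting fact).} If $|i-j|\ge 2$ and $v$ has an $i$-edge to $a\neq v$ and a $j$-edge to $b\neq v$, then $\rho_i\rho_j=\rho_j\rho_i$. Evaluating both sides at $v$, with right actions as in the paper, gives
\[ v\rho_i\rho_j = a\rho_j \quad\text{and}\quad v\rho_j\rho_i = b\rho_i. \]
If these are equal, then $b\rho_i = a\rho_j =: c$.
\begin{itemize}
\item If $c=v$, then $b\rho_i=v$ gives $b=v\rho_i=a$. So $a=b$ is joined to $v$ by both an $i$-edge and a $j$-edge.
\item If $c\neq v$, then $a$ and $c$ are joined by a $j$-edge and $b$ and $c$ by an $i$-edge. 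Now $a\neq v$ and $b\neq v$ since they are neighbours of $v$ via a matching edge.
\end{itemize}

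\textbf{Step 2 (cross edges are impossible).} Take $i$ to be a label of an $H_1$-edge at $v$ and $j$ a label of an $H_2$-edge at $v$, so $i\le m-1<m\le j$. The edge sets of $H_1$ and $H_2$ are disjoint, as the labels are disjoint. The vertex sets meet only in $v$, so $a\in H_1\setminus\{v\}$ and $b\in H_2\setminus\{v\}$.
\begin{itemize}
\item In the case $c=v$, we get $a=b$, which contradicts the fact that $H_1$ and $H_2$ share only $v$.
\item In the case $c\neq v$, the vertex $c$ has a $j$-edge to $a$. Since $j\ge m$, that edge lies in $H_2$, so $a\in H_2$. Hence $a\in H_1\cap H_2=\{v\}$, a contradiction.
\end{itemize}
Therefore every $H_1$-label $i$ at $v$ and every $H_2$-label $j$ at $v$ satisfy $|i-j|\le 1$. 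Since $i<j$, this forces $j=i+1$.

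\textbf{Step 3 (conclusion).} By hypothesis, $v$ has at least one $H_1$-label $i$ and at least one $H_2$-label $j$. Step 2 then gives uniqueness on both sides. Any two $H_1$-labels at $v$ must both equal $j-1$, and because each label class is a matching, $v$ has at most one edge of each label. So $v$ has exactly one $H_1$-edge, and similarly exactly one $H_2$-edge, and these carry labels $i$ and $i+1$.

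It remains to show $i=m-1$. This step is the subtle one, since a priori $H_1$ may simply omit some labels. I would read the labels of $H_1$ as being $0,\dots,m-1$ and those of $H_2$ as $m,\dots,r-1$, with the relevant splitting index taken to be the boundary between the two label sets actually used. Equivalently, $m$ is the smallest label appearing in $H_2$, so $m-1$ is the largest label appearing in $H_1$.

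Under that reading, Step 2 applied with $i=m-1$ shows the $H_2$-label at $v$ is $m$. Applied dually, it shows the $H_1$-label at $v$ is $m-1$. Step 1 itself is routine; the main care needed is in justifying this choice of $m$.
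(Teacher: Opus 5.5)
\textbf{The gap is in Step 3.} You treat $i=m-1$ as a subtle point and resolve it by re-reading \cref{glued-pr}, so that $m-1$ becomes the largest label actually used in $H_1$. The hypothesis does not say that; it only bounds the labels. The re-reading is also unnecessary, because you already have everything you need:
\begin{itemize}
\item $i\le m-1$, since that edge lies in $H_1$;
\item $j\ge m$, since that edge lies in $H_2$;
\item $j=i+1$, from Step 2.
\end{itemize}
Hence $m\le j=i+1\le m$, which gives $j=m$ and $i=m-1$ for the original $m$. This is exactly how the paper concludes. As written, your last paragraph is also circular. ``Step 2 applied with $i=m-1$'' presupposes that the $H_1$-edge at $v$ carries label $m-1$, which is what you are trying to prove. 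Even under your re-reading, knowing that $m-1$ occurs somewhere in $H_1$ says nothing about the label at $v$; you would still need the squeeze above. Replace that paragraph with the one-line inequality and the proof is complete.

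\textbf{Steps 1 and 2 are sound and carry the same idea as the paper.} The paper argues more directly. Since $\rho_i$ fixes $H_2\setminus\{v\}$ pointwise and $\rho_j$ fixes $H_1\setminus\{v\}$ pointwise, $\rho_i\rho_j$ sends $a\mapsto b\mapsto v$. So $(\rho_i\rho_j)^2\neq 1$, and therefore $j=i+1$. The same observation shortens your case split: $b\rho_i=b$, so $c=b$ immediately. If you keep your version, then in the case $c\neq v$ you must rule out $c=a$ before speaking of a $j$-edge from $c$ to $a$. This is easy: $c=a$ would give $b=a\rho_i=v$.
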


\begin{proof}
By assumption, $v$ is incident to a vertex $x \in H_1$ with an edge of label $i \leq m-1$ and to a vertex $y \in H_2$ with an edge of label $j \geq m$. Note that $\rho_i \rho_j$ sends $x$ to $y$ and $y$ to $v$, so $|\rho_i \rho_j| > 2$. This implies that $j = i+1$, so $i = m-1$ and $j = m$. Thus, these are the only edge labels possible for edges incident to $v$.
\end{proof}

By \cref{v-edges}, it follows that any group $\G$ satisfying \cref{glued-pr} will be the result of the operation described in \cref{sec:glue-op}.

\begin{proposition} \label{glued-primitive}
Suppose $\G = \langle \rho_0, \ldots, \rho_{r-1} \rangle$ satisfies \cref{glued-pr}. Then for all $0 \leq i \leq m-1$ and $m \leq j \leq r-1$, the restriction of $\langle \rho_i, \ldots, \rho_j \rangle$ to the orbit of $v$ is either $A_n$ or $S_n$, where $n$ is the number of points in that orbit. In particular, if $n \leq 5$, then the restriction is $S_n$.
\end{proposition}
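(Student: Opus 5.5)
Let $H = \langle \rho_i, \ldots, \rho_j \rangle$ with $0 \le i \le m-1 < m \le j \le r-1$, and let $O$ be the orbit of $v$ under $H$, with $n = |O|$. The plan is to invoke \cref{sym} via a point-stabilizer argument on $O$. By \cref{v-edges}, the only edges at $v$ carry labels $m-1$ and $m$, and both labels lie in $\{i, \ldots, j\}$. So in the permutation representation graph of $H$ (edges labelled $i, \ldots, j$ only), $v$ is a cut vertex of the component $O$. Removing $v$ splits $O \setminus \{v\}$ into a part $X_1$ in $H_1$ containing the $(m-1)$-neighbour $x$ of $v$ and a part $X_2$ in $H_2$ containing the $m$-neighbour $y$. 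Note that $|X_1|,|X_2| \ge 1$.

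\textbf{Step 1: $H$ is $2$-transitive on $O$.} Set $K_1 = \langle \rho_i, \ldots, \rho_{m-1} \rangle$ and $K_2 = \langle \rho_m, \ldots, \rho_j \rangle$. Generators of $K_1$ fix every point of $X_2$, since their edges all lie in $H_1$, and generators of $K_2$ fix every point of $X_1$. The orbit of $v$ under $K_1$ is $X_1 \cup \{v\}$, and under $K_2$ it is $X_2 \cup \{v\}$. Thus the stabilizer of $v$ in $H$ contains $K_1' = \langle \rho_i, \ldots, \rho_{m-2} \rangle$ and $K_2' = \langle \rho_{m+1}, \ldots, \rho_j \rangle$, but these need not act transitively on $X_1$ or $X_2$.

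A cleaner route is primitivity followed by \cref{prime-cycle}; I would try that first. Consider $\sigma = \rho_{m-1}\rho_m$. Away from $v$, $\rho_{m-1}$ acts only on $H_1$ and $\rho_m$ only on $H_2$, so they commute on $O\setminus\{v\}$, where $\sigma$ is a product of disjoint involutions. On $v$, $\sigma$ has the cycle $(x\, v\, y)$ extended: $x \mapsto v$? More carefully, $v\rho_{m-1} = x$ and $x\rho_m = x$, so $v \mapsto x$; $x \mapsto v \mapsto y$; $y \mapsto y\rho_{m-1} = y \mapsto v$. This gives a $3$-cycle $(v\, x\, y)$ in $\sigma$, provided $x\rho_m = x$ and $y\rho_{m-1}=y$, which holds because $x\in H_1\setminus\{v\}$ and $y \in H_2 \setminus\{v\}$. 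All other cycles of $\sigma$ have length $1$ or $2$, so $\sigma^2$ is exactly a $3$-cycle. It fixes at least $3$ points of $O$ once $n \ge 6$.

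\textbf{Step 2: primitivity.} This is the main obstacle. Suppose $B \ni v$ is a nontrivial block of $H$ on $O$. The $3$-cycle $c=\sigma^2$ moves $v$; if $c$ fixes a point of $B$ other than $v$, then $Bc \cap B \neq \emptyset$ but $Bc\ne B$, a contradiction. So I would argue that the block system is incompatible with the cut-vertex structure. For any generator $\rho_k$ with $k \ne m-1, m$, $\rho_k$ fixes $v$, hence fixes $B$. The block $B$ must then be a union of orbits of $\langle \rho_k : k \neq m-1,m\rangle$. If $B$ contained some point other than $v$ in, say, $X_1$, then one checks that $B\rho_{m}= B$, since $\rho_m$ fixes that point. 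This forces $B$ to be $\rho_m$-invariant and hence to contain $y$. Symmetrically $B$ contains $x$, and $B$ is fixed by all generators, so $B = O$. The delicate part is ensuring that $B$ actually meets both sides, which needs this stabilizer argument done carefully. Once $H|_O$ is primitive, \cref{prime-cycle} with the $3$-cycle $c$ gives $A_n$ or $S_n$ for $n\ge 6$.

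\textbf{Step 3: small cases.} For $n \le 5$, I would argue directly from the shape of $O$, which is a tree-like labelled graph with cut vertex $v$, and the presence of the $3$-cycle $c$. When $n\le 5$, some generator is a single transposition (an odd permutation); this follows by counting edges, since the $3$-cycle needs $x,v,y$ and few points remain. That yields $S_n$ via transitivity and, for instance, \cref{sym}, or by direct inspection.
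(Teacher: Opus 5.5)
Your proposal is correct and is essentially the paper's proof. The same cut-vertex block argument gives primitivity, and the ``delicate part'' is already settled by what you wrote: a nontrivial block $B \ni v$ contains some $z \neq v$ on one side, say $z \in X_1$, so $\rho_m$ fixes $z$, stabilizes $B$, and puts $y$ into $B$. After that, the $3$-cycle $(\rho_{m-1}\rho_m)^2$ together with \cref{prime-cycle} handles $n \geq 6$.

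Two small repairs are needed. For $n \leq 5$, finish with the primitivity you already proved plus the transposition you found, since transitivity plus a transposition alone is not enough; alternatively, do as the paper does and use \cref{sym} for $n=4$ and \cref{prime-cycle} for $n=5$, where the transposition fixes three points. Also drop the stray claim that $Bc \neq B$: it is false, because a block that meets its image equals it, and it is not used anywhere.
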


\begin{proof}
Let $O$ be the orbit of $v$ under $\langle \rho_i, \ldots, \rho_j \rangle$, and let $\G' = \langle \rho_i', \ldots, \rho_j' \rangle$ be the restriction to $O$. Here some of the $\rho_k'$ may be trivial.  We will first show that $\G'$ is primitive. Suppose that $v$ is part of a nontrivial block $B$ under the action of $\G'$, and let $x \in B$ with $x \neq v$. Without loss of generality, assume that $x \in H_1 \setminus v$. Then $\rho_m'$ fixes $x$, and so fixes $B$. Thus $B$ also contains $y = v \rho_m'$, which lies in $H_2 \setminus v$. Now, for every $k \geq m$, $\rho_k'$ fixes $x$, and for every $k \leq m-1$, $\rho_k'$ fixes $y$. It follows that every $\rho_k'$ fixes $B$, and so $B = O$ and $\G'$ is primitive.

Now note that $(\rho_{m-1}' \rho_m')^2$ acts as a 3-cycle on $v$ and its neighbors while fixing every other point in $O$. If $n \geq 6$, then \cref{prime-cycle} implies that $\G'$ is either $A_n$ or $S_n$. 
If $n = 3$, then the group is generated by two transpositions and is clearly $S_3$.
If $n = 4$, then either $H_1 \cap O$ or $H_2 \cap O$ consists of 2 vertices and a single edge.  By \cref{sym}, $\G'$ is a symmetric group.

Finally, if $n=5$, then there cannot be two edges of label $m-1$ and two edges of label $m$; that would require at least 7 points. So either $\rho_{m-1}$ or $\rho_m$ is a prime cycle that fixes at least 3 points, so again \cref{prime-cycle} finishes the proof.
\end{proof}

Of course, if we want to make a string C-group by gluing, it is necessary that the two sggis that were glued together are themselves string C. 

\begin{hypothesis}
\label{glued-cpr}
$\G$ satisfies \cref{glued-pr} and furthermore, $\langle \rho_0, \ldots, \rho_{m-1} \rangle$ and $\langle \rho_m, \ldots, \rho_{r-1} \rangle$ are string C-groups.
\end{hypothesis}

We now consider which intersection conditions need to be verified if a group satisfies \cref{glued-cpr}.

\begin{lemma} \label{some-ints_free}
If $\G$ satisfies \cref{glued-cpr} and, for all $0 \leq i \leq m-1$, $\langle \rho_0, \ldots, \rho_{m-1} \rangle 
\cap \langle \rho_i, \ldots, \rho_{r-1} \rangle = \langle \rho_i, \ldots, \rho_{m-1} \rangle$ then for all $0 \leq i \leq j \leq m-1$ $\langle \rho_0, \ldots, \rho_{j} \rangle 
\cap \langle \rho_i, \ldots, \rho_{r-1} \rangle = \langle \rho_i, \ldots, \rho_{j} \rangle$.
\end{lemma}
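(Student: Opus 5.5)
The plan is to derive the general intersection from the hypothesis (the case $j = m-1$) together with the string C property of $\G_1 := \langle \rho_0, \ldots, \rho_{m-1} \rangle$ given by \cref{glued-cpr}. Fix $0 \leq i \leq j \leq m-1$. The inclusion $\langle \rho_i, \ldots, \rho_j \rangle \subseteq \langle \rho_0, \ldots, \rho_j \rangle \cap \langle \rho_i, \ldots, \rho_{r-1} \rangle$ is immediate, so only the reverse inclusion needs an argument.

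Let $g \in \langle \rho_0, \ldots, \rho_j \rangle \cap \langle \rho_i, \ldots, \rho_{r-1} \rangle$. Since $j \leq m-1$, we have $\langle \rho_0, \ldots, \rho_j \rangle \leq \langle \rho_0, \ldots, \rho_{m-1} \rangle$. Hence $g$ lies in $\langle \rho_0, \ldots, \rho_{m-1} \rangle \cap \langle \rho_i, \ldots, \rho_{r-1} \rangle$, which equals $\langle \rho_i, \ldots, \rho_{m-1} \rangle$ by hypothesis. So
\[ g \in \langle \rho_0, \ldots, \rho_j \rangle \cap \langle \rho_i, \ldots, \rho_{m-1} \rangle. \]
Both subgroups here are generated by subsets of the distinguished generators of $\G_1$, which is a string C-group by \cref{glued-cpr}. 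The intersection condition in $\G_1$, applied with $I = \{0, \ldots, j\}$ and $J = \{i, \ldots, m-1\}$, gives that this intersection is $\langle \rho_k \mid k \in I \cap J \rangle = \langle \rho_i, \ldots, \rho_j \rangle$. This finishes the reverse inclusion.

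There is no real obstacle. The one point to check is that the intersection condition in $\G_1$ is applied inside $\G_1$ as an abstract sggi, and that it transfers to the subgroups of $\G$. This holds because $\langle \rho_0, \ldots, \rho_{m-1} \rangle$ in $\G$ is literally the group $\G_1$, with the same generators acting on the same set. I would also note that $I \cap J = \{i, \ldots, j\}$ is nonempty because $i \leq j$, so no degenerate empty-set case arises. Notably, the argument uses neither the gluing structure nor the fact that $\langle \rho_m, \ldots, \rho_{r-1} \rangle$ is string C; those are needed only for the other families of intersections treated afterwards.
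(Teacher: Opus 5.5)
Your proof is correct and matches the paper's: you use the hypothesis to reduce $\langle \rho_0, \ldots, \rho_j \rangle \cap \langle \rho_i, \ldots, \rho_{r-1} \rangle$ to $\langle \rho_0, \ldots, \rho_j \rangle \cap \langle \rho_i, \ldots, \rho_{m-1} \rangle$, and then apply the intersection condition inside the string C-group $\langle \rho_0, \ldots, \rho_{m-1} \rangle$. Your remark that neither the gluing structure nor the string C property of $\langle \rho_m, \ldots, \rho_{r-1} \rangle$ is needed is also accurate.
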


\begin{proof}
Under the given assumptions,
\begin{align*}
\langle \rho_0, \ldots, \rho_j \rangle \cap \langle \rho_i, \ldots, \rho_{r-1} \rangle &= \langle \rho_0, \ldots, \rho_j \rangle \cap \langle \rho_0, \ldots, \rho_{m-1} \rangle \cap \langle \rho_i, \ldots, \rho_{r-1} \rangle \\
&= \langle \rho_0, \ldots, \rho_j \rangle \cap \langle \rho_i, \ldots, \rho_{m-1} \rangle \\
&= \langle \rho_i, \ldots, \rho_j \rangle, 
\end{align*}
where the last equality follows since $\langle \rho_0, \ldots, \rho_{m-1} \rangle$ is a string C-group.

\end{proof}

Considering the dual of $\G$ then gives us the following.
\begin{corollary} \label{some-ints_free-2}
If $\G$ satisfies \cref{glued-cpr} and, for all $m \leq j \leq r-1$, $\langle \rho_m, \ldots, \rho_{r-1} \rangle 
\cap \langle \rho_0, \ldots, \rho_{j} \rangle = \langle \rho_m, \ldots, \rho_{j} \rangle$ then for all $m \leq j \leq r-1$, $\langle \rho_i, \ldots, \rho_{r-1} \rangle 
\cap \langle \rho_0, \ldots, \rho_{j} \rangle = \langle \rho_i, \ldots, \rho_{j} \rangle$.
\end{corollary}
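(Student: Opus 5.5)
The plan is to obtain this by duality from \cref{some-ints_free}, and also to check it directly, since the direct argument is only three lines. I read the conclusion as ranging over all $m \leq i \leq j \leq r-1$; the index $i$ is not quantified in the statement, and this is the range that makes the dual of \cref{some-ints_free} meaningful.

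\textbf{Duality route.} Pass to the dual $\G^* = \langle \rho_{r-1}, \ldots, \rho_0 \rangle$, with generators $\sigma_k = \rho_{r-1-k}$. Its permutation representation graph is that of $\G$ with each label $k$ replaced by $r-1-k$, so $\G^*$ satisfies \cref{glued-pr} with $H_2$ and $H_1$ exchanged and split point $m' = r-m$. The two pieces are still string C-groups, since the dual of a string C-group is string C. Hence $\G^*$ satisfies \cref{glued-cpr}. Translating indices, the hypothesis here becomes exactly the hypothesis of \cref{some-ints_free} for $\G^*$, and its conclusion translates back to the desired equalities for $\G$.

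\textbf{Direct route.} Mirror the proof of \cref{some-ints_free}. For $m \leq i \leq j \leq r-1$, note that $\langle \rho_i, \ldots, \rho_{r-1} \rangle \leq \langle \rho_m, \ldots, \rho_{r-1} \rangle$. We can therefore insert the latter group into the intersection and apply the hypothesis:
\begin{align*}
\langle \rho_i, \ldots, \rho_{r-1} \rangle \cap \langle \rho_0, \ldots, \rho_j \rangle &= \langle \rho_i, \ldots, \rho_{r-1} \rangle \cap \langle \rho_m, \ldots, \rho_{r-1} \rangle \cap \langle \rho_0, \ldots, \rho_j \rangle \\
&= \langle \rho_i, \ldots, \rho_{r-1} \rangle \cap \langle \rho_m, \ldots, \rho_j \rangle \\
&= \langle \rho_i, \ldots, \rho_j \rangle.
\end{align*}
The last equality is the intersection condition inside $\langle \rho_m, \ldots, \rho_{r-1} \rangle$, which holds because that group is string C by \cref{glued-cpr}.

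There is no real obstacle. The only point needing care is the index bookkeeping in the duality route, namely confirming that the split point becomes $r-m$ and that the two pieces swap roles. The direct computation avoids this bookkeeping entirely, so I would present it as the proof.
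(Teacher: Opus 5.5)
Your proposal is correct and matches the paper's proof, which passes to the dual $\G^*$ (with split point $r-m$) and applies \cref{some-ints_free}; your direct computation is that lemma's proof written in dual indices, so it is the same argument made explicit. Your reading of the unquantified index as ranging over $m \leq i \leq j \leq r-1$ is also exactly the range that the duality produces.
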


To understand another collection of intersection conditions, we will examine the action of $\langle \rho_i, \ldots, \rho_j \rangle$ on the orbit of $v$.

\begin{theorem}\label{thm:main}
Suppose that $\G$ is an even permutation group that satisfies \cref{glued-cpr}. Let $1 \leq i \leq m-1$ and  $m \leq j \leq r-2$.  
Then $\langle \rho_0, \ldots, \rho_j \rangle \cap \langle \rho_i, \ldots, \rho_{r-1} \rangle = \langle \rho_i, \ldots, \rho_j \rangle$.  
\end{theorem}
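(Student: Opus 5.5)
Write $A = \langle \rho_0, \ldots, \rho_j \rangle$, $B = \langle \rho_i, \ldots, \rho_{r-1} \rangle$, $C = \langle \rho_i, \ldots, \rho_j \rangle$, $X = \langle \rho_0, \ldots, \rho_{m-1} \rangle$ and $Y = \langle \rho_m, \ldots, \rho_{r-1} \rangle$. Since $C \leq A \cap B$, it suffices to show that every $g \in A \cap B$ lies in $C$. The strategy has two stages. First, multiply $g$ by a suitable element of $C$ so that the result fixes the $C$-orbit $O$ of $v$ pointwise. Second, use the string C property of $X$ and $Y$ on the two halves $H_1$ and $H_2$ to show that this new element lies in $C$.

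\textbf{Stage 1: normalising on $O$.} By \cref{glued-primitive}, $C$ acts on $O$ as $A_n$ or $S_n$. Every other $C$-orbit lies entirely in $H_1 \setminus v$ or entirely in $H_2 \setminus v$; let $\Omega'$ be their union. The supports of $\rho_{m-1}$ and $\rho_m$ meet only in $v$, so $\alpha = (\rho_{m-1}\rho_m)^2 \in C$ is a $3$-cycle on $v$ and its two neighbours and is trivial on $\Omega'$. Since $C$ is even, \cref{mix-with-sn} applies to $C$ acting on $\Omega' \sqcup O$: $C$ contains every pair $(\pi,\beta)$ with $\pi \in C_{\Omega'}$, $\beta \in C_O$ and $\pi,\beta$ of equal parity. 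In particular $C$ contains every even permutation of $O$ fixing $\Omega'$.

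Next I need $g$ to preserve $O$. The $A$-orbit of $v$ is the $X$-orbit of $v$ in $H_1$ together with $O \cap H_2$, and the $B$-orbit of $v$ is $O \cap H_1$ together with the $Y$-orbit of $v$ in $H_2$. Their intersection is $O$, so $g$ preserves $O$. I then want $c \in C$ with $c|_O = g^{-1}|_O$. If $C_O = S_n$ this is immediate. If $C_O = A_n$, I must show that $g|_O$ is even, presumably by a parity comparison inside $A$ and $B$ using that $\G$ is even; this needs care. Replacing $g$ by $g' = gc$, we get $g' \in A \cap B$ fixing $O$ pointwise, and in particular fixing $v$.

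\textbf{Stage 2: splitting across $v$.} Because $g'$ fixes $v$, it preserves $H_1 \setminus v$ and $H_2 \setminus v$, so we may write $g' = (g'_1, g'_2)$. The aim is to show that $g'_1$ is realised by an element of $\langle \rho_i, \ldots, \rho_{m-1} \rangle$ and $g'_2$ by an element of $\langle \rho_m, \ldots, \rho_j \rangle$, each fixing the far side. Here one uses that $X$ is string C and that $g' \in A$ restricts on $H_1$ into $X$ while $g' \in B$ restricts on $H_1$ into the group generated by the $H_1$-parts of $\rho_i, \ldots, \rho_{m-1}$ and $\rho_m$; symmetrically on $H_2$ with $Y$. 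Elements of $A$ fixing $v$ should factor as an element of $X_v$ times an element of $\langle \rho_m, \ldots, \rho_j\rangle_v$, and then \cref{mix-with-sn} (again with the $3$-cycle $\alpha$) lets us adjust each factor by even permutations of $O$ to separate the sides. Then the intersection condition in $X$, namely $X \cap \langle \rho_i, \ldots, \rho_{r-1}\rangle$-type restrictions giving $\langle \rho_i, \ldots, \rho_{m-1}\rangle$, places $g'_1$ in $C$, and dually for $g'_2$.

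\textbf{Main obstacle.} I expect Stage 2 to be the main obstacle. The generator $\rho_m$ moves $v$, so the action of $B$ on $H_1$ does not obviously restrict to $\langle \rho_i, \ldots, \rho_{m-1}\rangle$. Making the factorisation rigorous, likely via the stabiliser of $v$ and the mix decomposition $\G_{H_1\setminus v} \mix \G_{H_2 \setminus v}$, is the delicate point. A secondary difficulty is the parity issue when $C_O = A_n$ in Stage 1.
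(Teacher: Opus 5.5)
Your Stage 1 has correct ingredients. The argument that $g$ preserves $O$, because $O$ is the intersection of the $A$-orbit and the $B$-orbit of $v$, is valid, and it is a neat alternative to how the paper gets this. Applying \cref{mix-with-sn} to $C$ with $\alpha=(\rho_{m-1}\rho_m)^2$ is exactly the paper's final step. But both points you flag as delicate are left open, and the route you sketch for Stage 2 does not work.

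It is false that elements of $A$ fixing $v$ factor as an element of $X_v$ times an element of $\langle \rho_m,\ldots,\rho_j\rangle_v$. Both factors preserve $H_1\setminus v$ and $H_2\setminus v$. However, by \cref{glued-primitive} (with $i=0$), $A$ acts on its orbit of $v$ as an alternating or symmetric group, so its stabilizer of $v$ contains elements sending points of $H_1\setminus v$ to points of $H_2\setminus v$. Likewise, ``$g'$ restricts on $H_1$ into $X$'' is unjustified, and restriction to $H_1$ is not even defined on $B$, since $\rho_m$ moves $v$ out of $H_1$. Fixing $v$ alone also does not make $g'$ preserve $H_1\setminus v$; what is true is that $H_2\setminus O$ is a union of $A$-orbits and $H_1\setminus O$ a union of $B$-orbits. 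Finally, the string C property of $X$ and $Y$ is not needed for this theorem at all.

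The missing idea is a letter-deletion argument. Write $g$ as a word in $\rho_0,\ldots,\rho_j$ and delete every letter $\rho_k$ with $k\le m-1$; the result $\varphi_1$ lies in $\langle\rho_m,\ldots,\rho_j\rangle\le C$. The deleted letters fix $H_2\setminus v$ pointwise, and a point of $H_2\setminus O$ never reaches $v$ under the remaining letters, because its $\langle\rho_m,\ldots,\rho_j\rangle$-orbit misses $O$. So $\varphi_1$ agrees with $g$ on $H_2\setminus O$. Symmetrically, deleting the letters $\rho_k$ with $k\ge m$ from a word for $g$ in $\rho_i,\ldots,\rho_{r-1}$ gives $\varphi_2\in\langle\rho_i,\ldots,\rho_{m-1}\rangle$ agreeing with $g$ on $H_1\setminus O$. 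Then $\varphi_1\varphi_2\in C$ agrees with $g$ off $O$, which replaces your Stage 2 outright.

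This also resolves your parity issue. If $C$ acts on $O$ as an alternating group, then $\varphi_1\varphi_2$ is even on $O$, hence (since $\G$ is even) even off $O$. Thus $g$ is even off $O$, hence even on $O$, so some $\gamma\in C$ agrees with $g$ on $O$. Since $\varphi_1\varphi_2$ off $O$ and $\gamma$ on $O$ have the same parity, \cref{mix-with-sn} gives a single element of $C$ agreeing with $g$ everywhere.
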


\begin{proof}
We will show that for any element $\varphi$ of $\langle \rho_0, \ldots, \rho_j \rangle \cap \langle \rho_i, \ldots, \rho_{r-1} \rangle$, we can find an element of  $\langle \rho_i, \ldots, \rho_j \rangle$ that acts the same as $\varphi$ does on each point in $G$.

Let $O = v \langle \rho_i, \ldots, \rho_j \rangle$, and let $H_s' = H_s \setminus O$ for $s \in \{1,2\}$. Let $\varphi \in \langle \rho_0, \ldots, \rho_j \rangle \cap \langle \rho_i, \ldots, \rho_{r-1} \rangle$. Let us write $\varphi = \rho_{i_1} \cdots \rho_{i_t}$ with each $i_s \in \{0, \ldots, j\}$. Then, let $\varphi_1$ be this expression for $\varphi$ but with every $\rho_k$ with $k \leq m-1$ omitted. Since each such $\rho_k$ fixes $H_2'$ pointwise, it follows that $\varphi_1$ acts on points in $H_2'$ the same as $\varphi$ does. Furthermore, $\varphi_1$ fixes $H_1'$ pointwise. Note that $\varphi_1$ (and thus $\varphi$) must preserve $H_2'$ setwise, since $\varphi_1 \in \langle \rho_m, \ldots, \rho_j \rangle \leq \langle \rho_i, \ldots, \rho_j \rangle$, and so if $x \in H_2'$ then $x \varphi_1 \not \in O$.

Similarly, write $\varphi = \rho_{j_1} \cdots \rho_{j_l}$ with each $j_s \in \{i, \ldots, r-1\}$, and then set $\varphi_2$ to be the same expression but omitting every $\rho_k$ with $k \geq m$. Then $\varphi_2$ fixes $H_2'$ pointwise and acts on $H_1'$ as $\varphi$ does. By a similar argument to last time, $\varphi$ preserves $H_1'$. Noting that $\varphi_1 \in \langle \rho_m, \ldots, \rho_j \rangle \leq \langle \rho_i, \ldots, \rho_j \rangle$ and $\varphi_2 \in \langle \rho_i, \ldots, \rho_{m-1} \rangle \leq \langle \rho_i, \ldots, \rho_j \rangle$, it follows that $\varphi_1 \varphi_2$ is an element of $\langle \rho_i, \ldots, \rho_j \rangle$ that acts on the points in $H_1' \cup H_2'$ the same way that $\varphi$ does. In particular, since $\varphi$ preserves $H_1' \cup H_2'$, it must also preserve its complement $O$.

Next, we want to show that there is an element $\gamma$ of $\langle \rho_i, \ldots, \rho_j \rangle$ that acts on $O$ the same way that $\varphi$ does. 
By \cref{glued-primitive}, the action of $\langle \rho_i, \ldots, \rho_j \rangle$ on $O$ is either a symmetric group or an alternating group. In the former case, there is nothing to show. In the latter case, we only need to show that the action of $\varphi$ on $O$ is even.  Note that, since $\varphi_1 \varphi_2 \in \langle \rho_i, \ldots, \rho_j \rangle$, which acts as an alternating group on $O$, the restriction of $\varphi_1 \varphi_2$ to $O$ is even. Furthermore, $\G$ itself is even, so the restriction of $\varphi_1 \varphi_2$ to $G \setminus O = H_1' \cup H_2'$ is also even. Then, recalling that $\varphi_1 \varphi_2$ acts on $H_1' \cup H_2'$ the same way that $\varphi$ does, it follows that $\varphi$ itself acts on $H_1' \cup H_2'$ and thus $O$ as an even permutation.

We have found elements $\varphi_1 \varphi_2$ and $\gamma$ in $\langle \rho_i, \ldots, \rho_j \rangle$ such that the former acts like $\varphi$ on $H_1' \cup H_2'$ and the latter acts like $\varphi$ on $O$. It remains to show that there is an element of $\langle \rho_i, \ldots, \rho_j \rangle$ that acts like $\varphi$ on $H_1' \cup H_2'$ and $O$ simultaneously. Considering the action of $\langle \rho_i, \ldots, \rho_j \rangle$ on $G$, we can represent it as the mix of $\Lambda$ and $\Delta$, where $\Lambda$ is the induced permutation group on $H_1' \cup H_2'$ and $\Delta$ is the induced permutation group on $O$. Thus, our goal is to show that $(\varphi_1 \varphi_2, \gamma) \in \Lambda \mix \Delta$. Note that since $\G$ is even, the action of $\varphi$ on $H_1' \cup H_2'$ has the same parity as the action on $O$, and so $\varphi_1 \varphi_2$ has the same parity as $\gamma$. The result then follows from \cref{mix-with-sn} taking $\alpha = (\rho_{m-1} \rho_m)^2$.

\end{proof}

We make two notes following the proof of the theorem. 
First, we note that the bounds for $i$ and $j$ are sharp.  There exist even permutation groups $\G$ that satisfy \cref{glued-cpr} where $\langle \rho_i, \ldots, \rho_j \rangle$ acts as a symmetric group on the orbit of $v$ for all $ 1 \leq i \leq j \leq r-2$ with the property that some $\langle \rho_0, \ldots, \rho_j \rangle \cap \langle \rho_i, \ldots, \rho_{r-1} \rangle \neq \langle \rho_i, \ldots, \rho_j \rangle $ and are thus not string C-groups.  In particular, when $i=m$ or $j=m-1$ the intersections can begin to fail.  For example to build $\G$, take the group with the following permutation representation graph and glue it to its dual at the point $v$

\[
\xymatrix@-0pc{
*+[o][F]{} \ar@{-}[r]^0 \ar@<-.5ex>@{-}[r]_2 & 
*+[o][F]{} \ar@{-}[r]^1 & 
*+[o][F]{} \ar@{-}[r]^0 & 
*+[o][F]{} \ar@{-}[r]^1 & 
*+[o][F]{} \ar@{-}[r]^0 & 
*+[o][F]{} \ar@{-}[r]^1 & 
*+[o][F]{} \ar@{-}[r]^0 & 
*+[o][F]{} \ar@{-}[r]^1 & 
*+[o][F]{} \ar@{-}[r]^2 & 
*+[o][F]{v} 
}
\]

When $i=1$ and $j=2=m-1$, the action of $\langle \rho_1, \rho_2 \rangle$ on the orbit of $v$ is $S_3$.  
However, it can be checked that $\langle \rho_0, \ldots, \rho_2 \rangle \cap \langle \rho_1, \ldots, \rho_5 \rangle$ is three times larger than $\langle \rho_1, \rho_2 \rangle$.  Thus the theorem cannot be extended to include this case.

Second, note that the requirement that $\Gamma$ is even is important.  For instance, if we start with the automorphism group of a 3-cube acting on its 6 faces, we can then glue this to its dual at a vertex $v$ to get the following permutation representation graph for a group $\Gamma$.

$$\includegraphics[width=8cm]{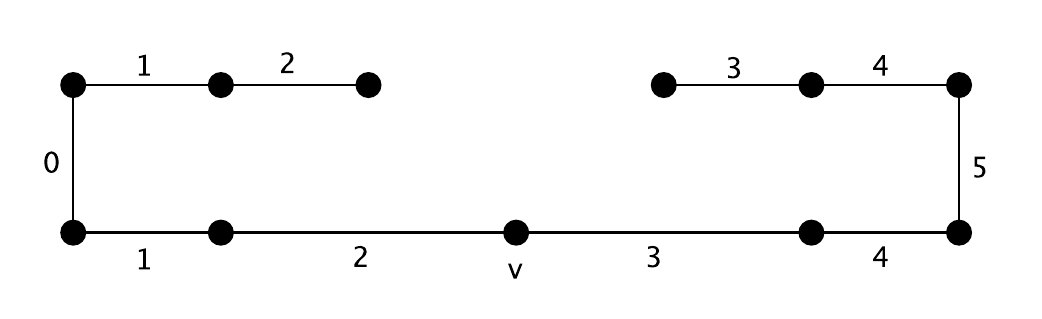}$$

The action of $\langle \rho_i, \ldots, \rho_j \rangle$ on the orbit of $v$ for all $1 \leq i \leq j \leq 4$ is always a symmetric group; however $\Gamma$ is not a string C-group. 
While $\langle \rho_1, \ldots, \rho_5 \rangle$ and $\langle \rho_0, \ldots, \rho_4 \rangle$ are both string C-groups, their intersection is not $\langle \rho_1, \ldots, \rho_4 \rangle$.
In particular one can check that $\langle \rho_1, \ldots, \rho_5 \rangle$ and $\langle \rho_0, \ldots, \rho_4 \rangle$ are both isomorphic to $S_3 \times S_8$ and their intersection is isomorphic to $S_3 \times S_5 \times S_3$.   However, $\langle \rho_1, \ldots, \rho_4 \rangle$ is isomorphic to $(S_3 \times S_5 \times S_3)^+$.

Let us explore another way in which permutation groups satisfying \cref{glued-pr} cannot be string C due to parity considerations. We will see examples of such groups in \cref{sec:moreboxes}.

\begin{proposition}\label{prop:notC}
Suppose $\G$ is a transitive rank $r$ sggi that satisfies \cref{glued-pr}, and that $\langle \rho_0, \ldots, \rho_{m-1} \rangle$ and $\langle \rho_m, \ldots, \rho_{r-1} \rangle$ are not even.
If $\langle \rho_1, \ldots, \rho_{m-1} \rangle$ and $\langle \rho_m, \ldots, \rho_{r-2} \rangle$ each acts evenly on its orbit of $v$, then $\G$ is not a string C-group.
\end{proposition}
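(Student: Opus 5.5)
The plan is to violate the intersection condition at the top level. I will show that $\G_{r-1} \cap \G_0$ contains an element that acts on the orbit $O = v\G_{0,r-1}$ as an odd permutation and fixes every other point. I will also show that no element of $\G_{0,r-1}$ acts oddly on $O$. Then $\G_{r-1}\cap \G_0 \neq \G_{0,r-1}$, and $\G$ is not string C.

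\textbf{Step 1 (parity on $O$).} Write $O_1 = O \cap H_1$ and $O_2 = O\cap H_2$. For $1 \le k \le m-1$, the generator $\rho_k$ fixes $H_2$ pointwise and preserves $O_1$. Its restriction to $O_1$ lies in the action of $\langle \rho_1,\ldots,\rho_{m-1}\rangle$ on the orbit of $v$, which is even by hypothesis. Hence $\rho_k$ acts evenly on $O$. Symmetrically, each $\rho_k$ with $m \le k \le r-2$ acts evenly on $O$. So every element of $\G_{0,r-1}$ restricts to an even permutation of $O$.

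\textbf{Step 2 (a large symmetric group inside $\G_{r-1}$).} By \cref{glued-primitive} with $i=0$ and $j=r-2$, the orbit $U = H_1 \cup O_2$ of $v$ under $\G_{r-1}$ carries an action of $A_U$ or $S_U$. Here I use transitivity of $\G$ to see that $\langle\rho_0,\ldots,\rho_{m-1}\rangle$ is transitive on $H_1$. The element $\alpha=(\rho_{m-1}\rho_m)^2 \in \G_{r-1}$ is a $3$-cycle on $U$ and acts trivially on the other points. This gives a nontrivial kernel for the projection of $\G_{r-1}$ onto its action off $U$. I then repeat the first half of the proof of \cref{mix-with-sn}, which does not use evenness: the kernel is a nontrivial normal subgroup of the group induced on $U$, and the $3$-cycle handles $|U|=4$. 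Hence $\G_{r-1}$ contains every even permutation of $U$ fixing all other points. Next, some generator $\rho_a$ with $a \le m-1$ is odd. It fixes $H_2$ pointwise, so it is an odd permutation of $U$ that fixes everything outside $U$. Multiplying it by elements of $A_U$ shows that $\G_{r-1}$ contains the full symmetric group $S_U$, acting on $U$ and fixing the rest pointwise.

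\textbf{Step 3 (dual argument and conclusion).} By the dual argument, $\G_0$ contains $S_{O_1\cup H_2}$ acting on $O_1 \cup H_2$ and fixing the rest pointwise. Since $U \cap (O_1\cup H_2) = O_1 \cup O_2 = O$, both subgroups contain $S_O$, acting on $O$ and trivially elsewhere. The orbit $O$ has at least two points, because $v$ has a neighbour via $\rho_{m-1}$ or $\rho_m$ lying in $O$. So a transposition of $O$ lies in $\G_{r-1}\cap\G_0$, and by Step 1 it does not lie in $\G_{0,r-1}$.

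The main obstacle is Step 2. The argument there needs some care at the boundary cases ($m=1$ or $m=r-1$, and small orbits with $|U|\le 5$). It also needs the observation that the kernel argument of \cref{mix-with-sn} survives without the evenness hypothesis.
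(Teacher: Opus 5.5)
Your proposal is essentially the paper's proof. In both, the pointwise stabilizer of the complement of a $v$-orbit is normal, which forces $\G_{r-1}$ and $\G_0$ to contain the full symmetric groups on $H_1\cup O_2$ and $O_1\cup H_2$. You reach $A_U$ through the $3$-cycle $(\rho_{m-1}\rho_m)^2$ and then add an odd generator; the paper instead notes that a normal subgroup of $S_{O_0}$ containing an odd permutation is all of $S_{O_0}$. In both, a transposition on $O$ then lies in $\G_0\cap\G_{r-1}$ but not in the parity-even $\G_{0,r-1}$.

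On the boundary cases you flag: small orbits cause no trouble. However, your Step 3 needs $m\geq 2$, just as the paper's appeal to \cref{glued-primitive} for $\G_0$ with $i=1$ does, and dually Step 2 needs $m\le r-2$. This restriction cannot be removed. For $m=1$, $r=4$, the sggi with $\rho_0=(x\,p_0)$, $\rho_1=(p_0\,p_1)(p_2\,p_3)(q_0\,q_1)(q_2\,q_3)$, $\rho_2=(p_1\,p_2)(p_3\,p_4)(q_1\,q_2)(q_3\,q_4)$, $\rho_3=(p_2\,q_2)(p_3\,q_3)(p_4\,q_4)$ meets every hypothesis, yet it is string C: here $\G_0\cong 2\wr D_5$ and $\G_0\cap\G_3=\G_{0,3}$.
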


\begin{proof}

Since $\G$ is transitive and satisfies \cref{glued-pr}, the orbit of $v$ under $\G_0$ contains all the points in the support of $\langle \rho_m, \ldots, \rho_{r-1} \rangle$.  Since $\langle \rho_m, \ldots, \rho_{r-1} \rangle$ is not even, it must have an odd generator.  Thus $\G_0 = \langle \rho_1, \ldots \rho_m, \ldots, \rho_{r-1} \rangle$ also is odd when restricted to the orbit of $v$ under $\G_0$.  Similarly, by duality, $\G_{r-1}$ is odd when restricted to the orbit of $v$ under $\G_{r-1}$.   By \cref{glued-primitive}, the restriction of these groups to their orbits of $v$ are both symmetric groups.  

Let $O_0$ be the orbit of $v$ under $\G_0$, and let $K_0$ be the subgroup of $\G_0$ consisting of all permutations that fix every point outside $O_0$. The group $K_0$ is the kernel of the homomorphism that restricts the action of $\G_0$ to the complement of $O_0$, which is invariant under $\G_0$, and hence $K_0$ is normal in $\G_0$. Therefore, the action induced by $K_0$ on $O_0$ is normal in the action induced by $\G_0$ on $O_0$. Since the latter action is $S_{O_0}$, the action induced by $K_0$ on $O_0$ is a normal subgroup of $S_{O_0}$.

The group $\langle\rho_m,\ldots,\rho_{r-1}\rangle$ is contained in $K_0$, since it fixes every point outside $O_0$, and it contains an odd permutation. Thus the action induced by $K_0$ on $O_0$ is a normal subgroup of $S_{O_0}$ containing an odd permutation. It follows that $K_0$ acts as a symmetric group on $O_0$. Since $K_0$ fixes every point outside $O_0$, the group $\G_0$ contains the full symmetric group supported on $O_0$.

Dually, $\G_{r-1}$ contains the full symmetric group supported on its orbit $O_{r-1}$ of $v$. Let $\Delta=O_0\cap O_{r-1}$. Every permutation supported on $\Delta$ is contained in both $\G_0$ and $\G_{r-1}$. Therefore,
$S_\Delta\leq \G_0\cap\G_{r-1}.$ Now let $O$ be the orbit of $v$ under $\G_{0,r-1}$. Since $\G_{0,r-1}\leq\G_0\cap\G_{r-1}$, we have $O\subseteq\Delta$.  The group $
\langle\rho_1,\ldots,\rho_{m-1},
\rho_m,\ldots,\rho_{r-2}\rangle
$  acts evenly on $O$. Furthermore, $\langle\rho_1,\ldots,\rho_{m-1}\rangle$ acts evenly on its orbit of $v$ and fixes the remaining points of $O$, while $\langle\rho_m,\ldots,\rho_{r-2}\rangle$ acts evenly on its orbit of $v$ and fixes the remaining points of $O$.

Choose two distinct points $x,y\in O$. Since $O\subseteq\Delta$, the transposition $(x,y)$ belongs to $\G_0\cap\G_{r-1}$. However, this transposition acts oddly on $O$, so it does not belong to $\G_{0,r-1}$. Therefore,
$\G_0\cap\G_{r-1}\neq\G_{0,r-1},$
and hence $\G$ is not a string C-group.
\end{proof}

Finally, we come to our main theorem for this section. 
\begin{theorem}
\label{string-C}
Let $\G$ be an even permutation group that satisfies \cref{glued-cpr}. Suppose that $\G$ satisfies the following:
\begin{enumerate}
\item For all $1 \leq i \leq m-2$, $\langle \rho_0, \ldots, \rho_{m-1} \rangle \cap \langle \rho_i, \ldots, \rho_{r-1} \rangle = \langle \rho_i, \ldots, \rho_{m-1} \rangle$,
\item For all $m+1 \leq j \leq r-2$, $\langle \rho_m, \ldots, \rho_{r-1} \rangle \cap \langle \rho_0, \ldots, \rho_j \rangle = \langle \rho_m, \ldots, \rho_j \rangle$.
\end{enumerate}
Then $\G$ is a string C-group.
\end{theorem}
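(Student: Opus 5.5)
The plan is to reduce to \cref{int-cond}. Under \cref{glued-pr} no generator is trivial and distinct generators are distinct permutations, because each label appears as at least one edge. I first need to justify this. Every label $0,\ldots,m-1$ occurs in $H_1$ and every label $m,\ldots,r-1$ occurs in $H_2$, since the string C-groups $\langle \rho_0,\ldots,\rho_{m-1}\rangle$ and $\langle \rho_m,\ldots,\rho_{r-1}\rangle$ have nontrivial, distinct generators as part of being string C-groups. Two generators from different halves have disjoint supports away from $v$, and at $v$ only $\rho_{m-1}$ and $\rho_m$ act, moving $v$ to different vertices, so they cannot coincide. It then suffices to verify
\[ \langle \rho_0, \ldots, \rho_j \rangle \cap \langle \rho_i, \ldots, \rho_{r-1} \rangle = \langle \rho_i, \ldots, \rho_j \rangle \]
for all $1 \leq i \leq j \leq r-2$. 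I split this into three cases: $j \leq m-1$, $i \geq m$, and $i \leq m-1 < m \leq j$.

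For the mixed case $1 \leq i \leq m-1$ and $m \leq j \leq r-2$, the equality is exactly \cref{thm:main}, which uses evenness of $\G$ and \cref{glued-cpr}.

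For the case $j \leq m-1$, I apply \cref{some-ints_free}. Its hypothesis is that $\langle \rho_0,\ldots,\rho_{m-1}\rangle \cap \langle \rho_i,\ldots,\rho_{r-1}\rangle = \langle \rho_i,\ldots,\rho_{m-1}\rangle$ for all $0 \le i \le m-1$. Condition (1) gives this for $1 \le i \le m-2$, and the remaining values are handled as follows.
\begin{itemize}
\item For $i=0$ the statement is trivial.
\item For $i = m-1$, I use the mixed case already proved, with $i=m-1$ and $j=m$ (valid when $m \le r-2$). This gives $\langle \rho_0,\ldots,\rho_m\rangle \cap \langle \rho_{m-1},\ldots,\rho_{r-1}\rangle = \langle \rho_{m-1},\rho_m\rangle$. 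Intersecting both sides with $\langle \rho_0,\ldots,\rho_{m-1}\rangle$ yields $\langle \rho_0,\ldots,\rho_{m-1}\rangle \cap \langle \rho_{m-1},\ldots,\rho_{r-1}\rangle = \langle \rho_0,\ldots,\rho_{m-1}\rangle \cap \langle \rho_{m-1},\rho_m\rangle$.
\end{itemize}
It remains to show that this last group is $\langle \rho_{m-1}\rangle$. My approach is to look at $H_2'$, the part of $H_2$ away from $v$: every element of the left side fixes $H_2'$ pointwise, while $\langle \rho_{m-1},\rho_m\rangle$ acts on the orbit of $v$ as $S_3$, by \cref{glued-primitive} with $n=3$. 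I will do the small analysis of which elements of $\langle\rho_{m-1},\rho_m\rangle$ fix the $H_2$-points, arguing via the dihedral structure and the edge at $v$.

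If this direct route is messy, an alternative for $i=m-1$ is to note that $\langle \rho_{m-1},\ldots,\rho_{r-1}\rangle$ acting on $H_1$ involves only $\rho_{m-1}$ together with the $H_2$-generators, whose only contact with $H_1$ is at $v$. Its elements restricted to $H_1'$ are then just powers of $\rho_{m-1}$ there, and a parity/support argument finishes. The case $i \ge m$ is dual, using \cref{some-ints_free-2}, condition (2), and the mixed case with $j = m$ handled as its $j=m$ instance. The degenerate cases $m=1$ or $m = r-1$ will be treated separately, using \cref{max} or the fact that one half is a single generator.

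I expect the main obstacle to be these boundary indices $i = m-1$ and $j = m$, which conditions (1) and (2) deliberately omit. They have to be extracted from \cref{thm:main} together with the local structure at $v$ from \cref{v-edges}.
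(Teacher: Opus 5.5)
Your architecture is the paper's. \cref{int-cond} reduces everything to the chain intersections, \cref{thm:main} gives the mixed range, and \cref{some-ints_free} and \cref{some-ints_free-2} give the one-sided ranges once the boundary intersections at $i=m-1$ and $j=m$ (which conditions (1) and (2) omit) are known. Your check that the generators are nontrivial and pairwise distinct, needed to invoke \cref{int-cond}, is a point the paper passes over. Your reduction of the $i=m-1$ case to the claim $\langle \rho_0,\ldots,\rho_{m-1}\rangle\cap\langle\rho_{m-1},\rho_m\rangle=\langle\rho_{m-1}\rangle$ goes through \cref{thm:main} at $(i,j)=(m-1,m)$. It is valid for $2\le m\le r-2$, but it is an unnecessary detour, and it is the only reason degenerate cases arise at all. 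When $m=r-1$ (dually $m=1$), the relevant group already is $\langle\rho_{m-1},\rho_m\rangle$, so those cases are the same claim with no reduction needed.

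The genuine gap is that this boundary claim, which is the entire substantive content of the paper's proof, is never proved. ``I will do the small analysis'' is a promise, and the $S_3$ action on $\{x,v,y\}$ cannot decide the claim by itself (here $x$ and $y$ are the neighbors of $v$ via $\rho_{m-1}$ and $\rho_m$). For example, $(\rho_{m-1}\rho_m)^3$ is trivial on $\{x,v,y\}$ and acts as $\rho_{m-1}$ on the rest of $H_1$. It fails to fix $H_2\setminus v$ pointwise only because $\rho_m$, being even, has an edge away from $\{v,y\}$. The missing ingredient is evenness, used directly as follows; this is the dual of the paper's argument and needs no appeal to \cref{thm:main}. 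Let $\varphi\in\langle\rho_0,\ldots,\rho_{m-1}\rangle\cap\langle\rho_{m-1},\ldots,\rho_{r-1}\rangle$. Then $\varphi$ fixes $H_2\setminus v$ pointwise. Writing $\varphi$ as a word in $\rho_{m-1},\ldots,\rho_{r-1}$ with $k$ occurrences of $\rho_{m-1}$ shows that $\varphi$ acts as $\rho_{m-1}^k$ on $H_1\setminus\{x,v\}$. Hence $\varphi\rho_{m-1}^k$ is an even permutation supported on $\{x,v\}$, so it is trivial and $\varphi=\rho_{m-1}^k$. The case $j=m$ is dual. Since this works for every $m$, the degenerate cases disappear. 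Your ``alternative'' sentence gestures at exactly this argument, but neither of your routes is actually carried out.
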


\begin{proof}
This almost directly follows from \cref{int-cond}, \cref{some-ints_free}, \cref{some-ints_free-2}, and \cref{thm:main}, except that to apply \cref{some-ints_free} and \cref{some-ints_free-2}, we still need to handle the cases $i = m-1$ and $j = m$. That is, it remains to show that
\[ \langle \rho_0, \ldots, \rho_{m-1} \rangle \cap \langle \rho_{m-1}, \ldots, \rho_{r-1} \rangle = \langle \rho_{m-1} \rangle \]
and 
\[ 
\langle \rho_m, \ldots, \rho_{r-1} \rangle \cap \langle \rho_0, \ldots, \rho_m \rangle = \langle \rho_m \rangle.
\]
We will show the latter; the former then follows by duality. 

Let $y$ be the neighbor of $v$ in $H_2$, which is connected to $v$ by an edge of label $m$. 
Let $\varphi \in \langle \rho_m, \ldots, \rho_{r-1} \rangle \cap \langle \rho_0, \ldots, \rho_m \rangle$. Since $\varphi \in \langle \rho_m, \ldots, \rho_{r-1} \rangle$, it fixes $H_1 \setminus v$ pointwise. Now, write $\varphi = \rho_{k_1} \cdots \rho_{k_t}$, with each $k_s \in \{0, \ldots, m\}$. If $\rho_m$ occurs an even number of times, then $\varphi$ fixes $H_2 \setminus \{v,y\}$ pointwise as well, so all it could possibly do is swap $v$ and $y$. Since $\G$ is even, $\varphi$ must be the identity. 

On the other hand, suppose $\rho_m$ occurs an odd number of times. Then for every $x \in H_2 \setminus \{v,y\}$ we have $x \varphi = x \rho_m$. Now, since $\G$ is even, there are an even number of edges of label $m$. Thus $\varphi$ cannot fix $v$, since then it would be the product of an odd number of transpositions. It follows that $v \varphi = y$, and so $\varphi$ acts like $\rho_m$ everywhere. So no matter the parity of the number of occurrences of $\rho_m$, we have $\varphi \in \langle \rho_m \rangle$. 
\end{proof}

\section{Families of sggis for alternating groups} \label{sec:families}

In order to build polytopes of high rank for the alternating group, we will need some initial ingredients that we can glue together. The first such group is described in \cite[Lemma 6.6]{FLM12}, which provides a family of permutation representation graphs for rank $m$ string C-groups acting as permutation groups of degree $n=2m+2$ which are isomorphic to $(2 \wr S_{m+1})^+$ for $m \geq 4$.   The permutation representation graph for the dual of such a group is shown in \cref{fig:xyz}.  We note that this family of permutation representation graphs also gives a string C-group when $m=3$.  However, the group is no longer isomorphic to a wreath product.  When $m=3$ the group is isomorphic to $C_2 \times S_4$ and the permutation representation is that of the automorphism group of a regular map $(3,6)_{(2,0)}$ acting on its 8 faces.  We will denote this family of string C-groups as $X(m)$.

\begin{figure}[h]
\begin{center}
$X(m)$

\includegraphics[width=8cm]{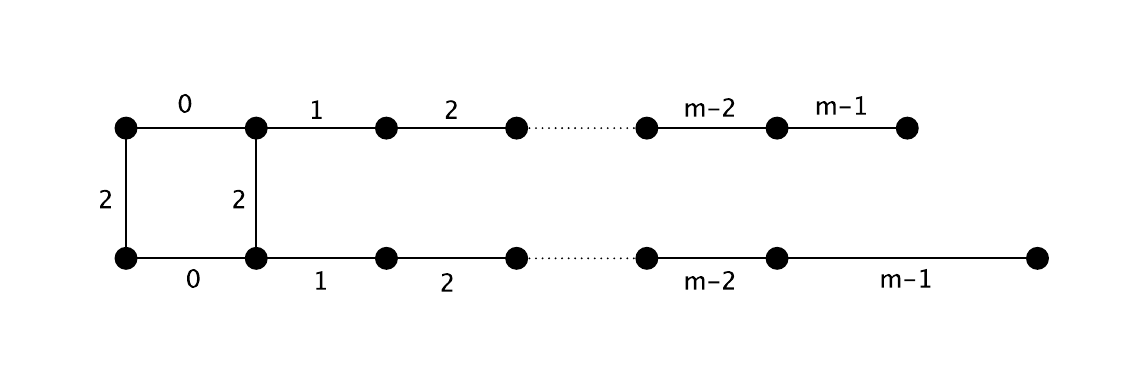}

$Y(m)$

\includegraphics[width=8cm]{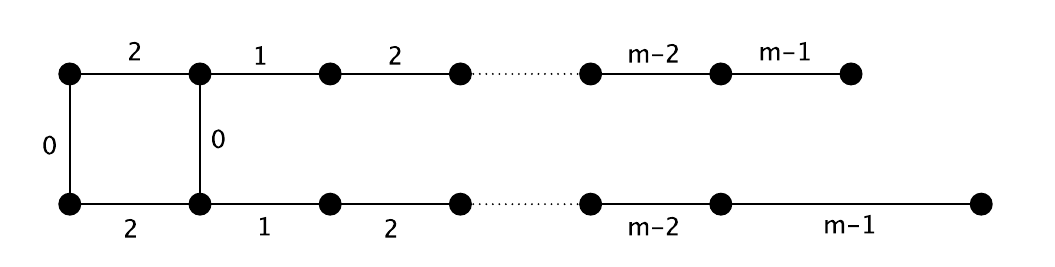}

$Z(m)$

\includegraphics[width=8cm]{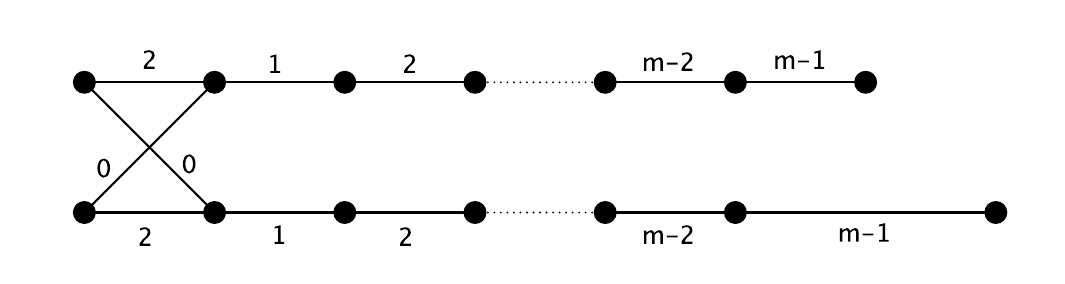}
\end{center}
\caption{The sggis $X(m)$, $Y(m)$, and $Z(m)$}
\label{fig:xyz}
\end{figure}

We will also consider here some closely related groups, which will also be sggis of rank $m$ and degree $n = 2m+2$. These will be denoted $Y(m)$ and $Z(m)$. See \cref{fig:xyz}.

Let us show that $Y(m)$ and $Z(m)$ are also string C-groups typically acting as even subgroups of wreath products.

\begin{proposition} \label{Y/2}
For all $m \geq 3$, if $\G = \langle \rho_0, \ldots \rho_{m-1} \rangle$ is the rank $m$ sggi $Y(m)$ or $Z(m)$, then $\G$ is a string C-group.  Furthermore, when $m \ge 4$, $\G$ is isomorphic to $(2 \wr S_{m+1})^+$.
\end{proposition}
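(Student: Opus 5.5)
We do not see the figures, so the plan is phrased in terms of the structure we expect. $X(m)$, $Y(m)$ and $Z(m)$ should all be obtained from the graph of \cref{wreath} (two parallel paths joined by a rung) by small local modifications. Such a modification adds or moves a single edge, or doubles an edge, near one end of the graph, so that every generator becomes even. The natural approach is to relate $Y(m)$ and $Z(m)$ to the string C-group $2 \wr S_{m+1}$ of \cref{wreath}, or to $X(m)$. Both the string C property and the group structure would then be transported via mixing with an edge $E_k$ and the orientability results of Section 2.

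\textbf{Step 1 (group structure, $m\ge 4$).} The $m+1$ pairs of vertices (the two endpoints of each "rung" position in the two-row picture) should form a block system preserved by every generator. This gives $\G \le 2 \wr S_{m+1}$, and $\G$ is even by inspection of the edge counts, so $\G \le (2\wr S_{m+1})^+$. For the reverse inclusion I would check two things.

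First, the action on blocks is all of $S_{m+1}$: the block graph should be a path of length $m$ with labels $0,\dots,m-1$, which is the simplex action.

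Second, the kernel $K$ of the block action contains all even sign-changes $2^{m}$. To see this, find one element flipping exactly two blocks, for instance a suitable power of $\rho_{k}\rho_{k+1}$ near the modified end. Then conjugate it by the transitive $S_{m+1}$ action on blocks; this generates the even-weight subspace of $2^{m+1}$.

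Together these give $|\G| = 2^m (m+1)!$, so $\G \cong (2\wr S_{m+1})^+$.

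\textbf{Step 2 (string C).} I would use \cref{max}, inducting on $m$. The subgroups $\G_0$ and $\G_{m-1}$ are read off from the graph. One of them should be a graph of the same family with rank one less, possibly mixed with a single extra edge; the latter case is handled by \cref{mix-edge} or \cref{mix-ek-string-c}. The other should be a disjoint union whose components are string C by \cref{wreath} or by the $X(m-1)$ family. The base case $m=3$ is a finite check, done by direct computation of small orders or cited as a Magma verification.

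It then remains to show $\rho_0 \notin \G_0$, which holds because $\G_0$ fixes a vertex that $\rho_0$ moves, or preserves a block structure that $\rho_0$ breaks. We also need $\G_{0,m-1}$ to be maximal in $\G_{m-1}$. Here $\G_{m-1}$ should be $(2\wr S_m)^+$, or $2\wr S_m$ acting on a subset, and $\G_{0,m-1}$ the stabiliser of a block, i.e.\ $(2 \wr S_{m-1})^+ \times$ a small factor. The stabiliser of a point in the transitive block action is maximal, since $S_{m-1}$ is maximal in $S_m$ and the kernel is already contained. The orders can be compared directly using Step 1.

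\textbf{Main obstacle.} The hardest step is the maximality claim, and the identification of $\G_{0,m-1}$ and $\G_{m-1}$, in the modified graphs. The modifications may leave an extra fixed edge, so these subgroups are mixes rather than pure wreath products. I would handle this with \cref{mix-with-sn}-style arguments, or switch to the dual form of \cref{max} if that side is cleaner. If maximality fails, I would fall back on \cref{int-cond} and verify the intersections $\langle\rho_0..\rho_j\rangle\cap\langle\rho_i..\rho_{m-1}\rangle$ via the block structure.
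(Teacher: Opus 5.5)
There is a genuine gap in Step 2. It comes from modelling $Y(m)$ and $Z(m)$ on $X(m)$, where $\G_{0,m-1}$ really is a block stabilizer.

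\textbf{What goes wrong.} For $Y(m)$ and $Z(m)$ one has $\G_0=\langle\rho_1,\ldots,\rho_{m-1}\rangle\cong S_{m+1}$ and $\G_{0,m-1}\cong S_m$. Here $\G_0$ is the rank $m-1$ string C-group for $S_{m+1}$ of type $\{4,6,3^{m-4}\}$ from \cite[Lemma 20]{FL11}. For $m\ge 5$, $S_{m+1}$ and $S_m$ have no nontrivial normal elementary abelian $2$-subgroup. So both groups meet the within-block (base) part trivially and act faithfully on the blocks. Two consequences follow.
\begin{itemize}
\item The block graph is not the simplex path. Deleting label $0$ would then isolate a block, and $\G_0$ could not act as $S_{m+1}$. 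So the justification in your Step 1 is wrong, although its conclusion survives.
\item $\G_{0,m-1}$ acts as the full $S_m$ on the $m$ blocks moved by $\G_{m-1}\cong(2\wr S_m)^+$. It is therefore a complement to the base group, not a block stabilizer: its order is $m!$, not $2^{m-1}(m-1)!$.
\end{itemize}
Such a complement is not maximal when $m$ is even. Let $z$ swap the two points in each of those $m$ blocks. Then $z$ is even, lies in $\G_{m-1}$, is central there, and acts trivially on blocks, so $z\notin\G_{0,m-1}$. Hence $\G_{0,m-1}<\langle z\rangle\times\G_{0,m-1}<\G_{m-1}$, and \cref{max} cannot be applied with maximality in $\G_{m-1}$. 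Moreover, nothing in your toolkit (\cref{wreath}, the $X(m-1)$ family, \cref{mix-edge}) shows that $\G_0$ is string C.

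\textbf{The fix.} Put maximality on the other side, as the paper does. For $m\ge 6$:
\begin{itemize}
\item $\G_{m-1}$ is string C and isomorphic to $(2\wr S_m)^+$ by induction.
\item $\G_0\cong S_{m+1}$ and $\G_{0,m-1}\cong S_m$ are string C by \cite[Lemma 20]{FL11}.
\item $\G_{0,m-1}$ is the stabilizer in $\G_0$ of the last block, hence maximal in $\G_0$.
\end{itemize}
With maximality in $\G_0$, the side condition to check is $\rho_{m-1}\notin\G_{m-1}$. Indeed, $\G_0\cap\G_{m-1}$ lies between $\G_{0,m-1}$ and $\G_0$, and equals $\G_0$ only if $\rho_{m-1}\in\G_{m-1}$. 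This condition is immediate, because $\G_{m-1}$ fixes the degree-$1$ end vertex that $\rho_{m-1}$ moves. The cases $m\le 5$ are a finite computation.

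You do list the dual form of \cref{max} as a fallback. But without identifying $\G_0$ and a source for its string C property, the argument does not close.

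Your order count in Step 1 is fine once you exhibit an element flipping exactly two blocks. The full $S_{m+1}$ block action should come from $\G_0$, or from \cref{sym} applied to the block transposition $\rho_{m-1}$, not from the simplex picture. The paper instead gets the even base group inductively from $\G_{m-1}$, conjugating by $\rho_{m-1}$.
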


\begin{proof}
We proceed by induction. The group $Y(3)$ is the automorphism group of the regular map $(4,4)_{(2,2)}$ acting on its facets, and $Z(3)$ is the automorphism group of the regular octahedron acting on its facets. These are both string C-groups. The cases $m = 4$ and $m=5$ can be checked using GAP. Now assume $m >5$.   The group $\G_{m-1} =  \langle \rho_0, \ldots \rho_{m-2} \rangle$ is a string C-group isomorphic to $(2 \wr S_{m})^+$ by the inductive hypothesis.  The groups $\G_{0} =  \langle \rho_1, \ldots \rho_{m-1} \rangle$ and $\G_{0,m-1} =  \langle \rho_1, \ldots \rho_{m-2} \rangle$ were shown to be string C-groups isomorphic to $S_{m+1}$ and $S_{m}$ in \cite[Lemma 20]{FL11}.   Since $\G_{0,m-1}$ is a maximal subgroup of $\G_{0}$, and $\rho_0 \not \in \G_0$, it follows from Lemma~\ref{max} that $\G$ is a string C-group.

To see that $\G \cong (2 \wr S_{m+1})^+$, we notice that $\G$ has one more block of size 2 when compared to $\G_{m-1}$.  The action on the $(m+1)$ blocks of size 2 is the symmetric group as $\G_0$ acts between the blocks.  It remains to show that any possible even permutation in the groups acting within the blocks is possible, so that the base group is $(C_2^{m+1})^+$.  By induction, we know that any possible even permutation within the first $m$ blocks is possible using only elements in $\G_{m-1}$.  Let $\alpha$ be the permutation in $\G_{m-1}$ that swaps the points within blocks $m-1$ and $m$, the last two blocks moved by $\G_{m-1}$.    Then $\beta = \rho_{m-1} \alpha \rho_{m-1}$ swaps the points in blocks $m-1$ and $m+1$. Multiplying $\beta$ by all even permutations in $\G_{m-1}$ then gives us all possible permutations in $(C_2^{m+1})^+$, and so $\G \cong (2 \wr S_{m+1})^+$.
\end{proof}

Let $\G \in \{ X(m), Y(m), Z(m) \}$, and let $\G' \in \{ X(r_2), Y(r_2), Z(r_2) \}$ for some values of $m,r_2 \geq 3$.  We will consider the nine families of sggis obtained when you glue $\G$ to the dual of $\G'$.  For instance, we could glue $X(m)$ to the dual of $Y(r_2)$; we denote the result as $XY(m,r_2)$.  We will use a similar naming convention for all possible ways to glue $\G$ and the dual of $\G'$.  We note that some of these families are dual to each other; for instance, $XZ(m,r_2)$ is dual to $ZX(r_2,m)$.  The results are all sggis of rank $r=m+r_2$ for a permutation group of degree $n=2m+2r_2+3$. Note that the groups $XX(m,m)$, $YY(m,m)$ and $ZZ(m,m)$ are all self-dual sggis; see \cref{fig:glue-xyz}. Our goal is to show that these groups are all string C-groups.

\begin{figure}[h]
\begin{center}
$XX(m,m)$

\includegraphics[width=16cm]{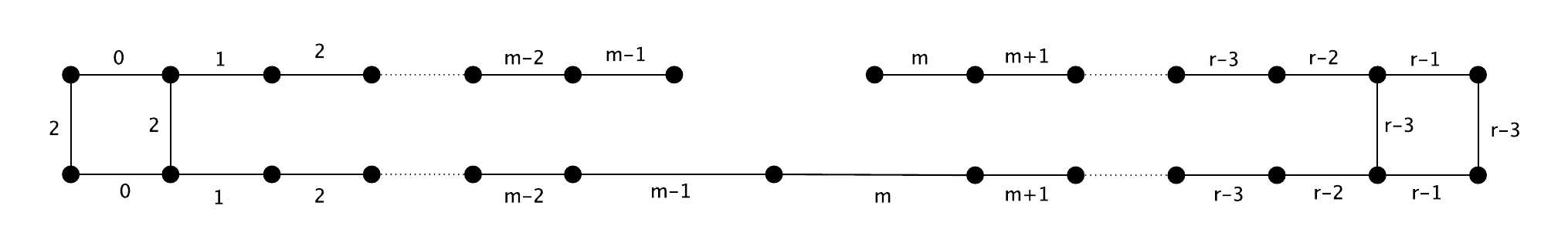}

$YY(m,m)$

\includegraphics[width=16cm]{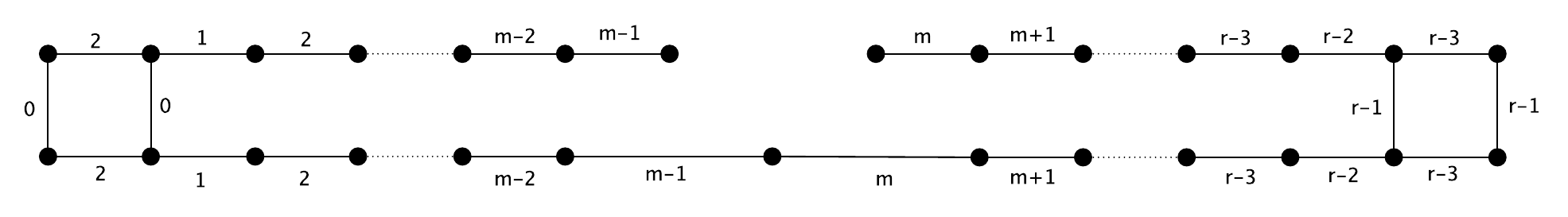}

$ZZ(m,m)$

\includegraphics[width=16cm]{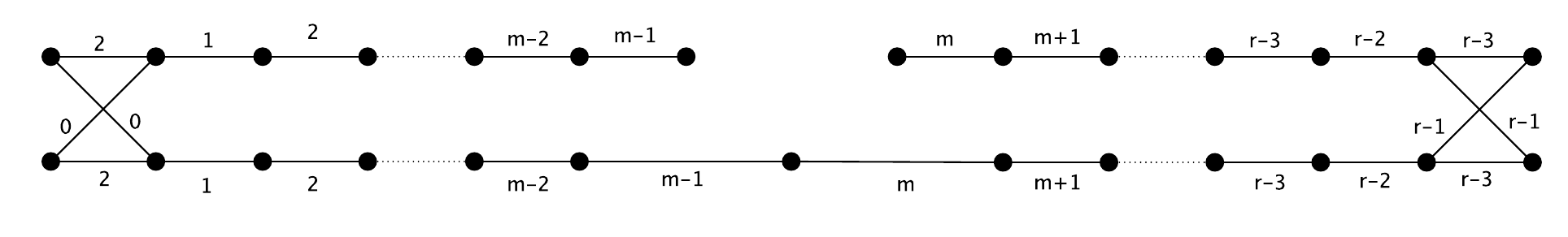}

\end{center}
\caption{The self-dual sggis $XX(m,m), YY(m,m)$, and $ZZ(m,m)$, where $r = 2m$.}
\label{fig:glue-xyz}
\end{figure}

\begin{lemma} 
\label{glueXYZ}
Let \(\G=\langle \rho_0,\ldots,\rho_{r-1}\rangle\) be an even sggi satisfying
\cref{glued-cpr}, where $\langle \rho_m, \ldots, \rho_{r-1} \rangle$ is the dual of one of
\(X(r_2)\), \(Y(r_2)\), or \(Z(r_2)\), with \(r_2\geq 3\). Then, for every
\(m+1\leq j\leq r-2\),
\[
\langle \rho_0,\ldots,\rho_j\rangle
\cap
\langle \rho_m,\ldots,\rho_{r-1}\rangle
=
\langle \rho_m,\ldots,\rho_j\rangle.
\]
\end{lemma}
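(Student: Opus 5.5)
The plan is to adapt the splitting argument from the proof of \cref{thm:main}, now using the explicit shape of the duals of $X(r_2)$, $Y(r_2)$ and $Z(r_2)$ in \cref{fig:xyz}. Fix $m+1 \leq j \leq r-2$ and set $\Lambda_j = \langle \rho_m, \ldots, \rho_j \rangle$. Let $O = v\Lambda_j$ be the orbit of $v$ under $\Lambda_j$; it lies inside $H_2$. Put $H_2' = H_2 \setminus O$. Take $\varphi \in \langle \rho_0, \ldots, \rho_j \rangle \cap \langle \rho_m, \ldots, \rho_{r-1} \rangle$. The inclusion $\supseteq$ is clear, so it suffices to show $\varphi \in \Lambda_j$.

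\textbf{Step 1 (reduce to an element supported on $O$).}
\begin{itemize}
\item Since $\varphi \in \langle \rho_m, \ldots, \rho_{r-1} \rangle$, it fixes $H_1 \setminus v$ pointwise.
\item Write $\varphi$ as a word in $\rho_0, \ldots, \rho_j$. Delete every letter $\rho_k$ with $k \leq m-1$ to get $\varphi_1 \in \Lambda_j$.
\item The deleted letters fix every point of $H_2$ other than $v$, and $v \in O$. Letters of $\Lambda_j$ never carry a point of $H_2'$ into $O$. So a point of $H_2'$ is tracked identically by the word for $\varphi$ and the word for $\varphi_1$. Hence $\varphi_1$ agrees with $\varphi$ on $H_2'$.
\item Set $\psi = \varphi_1^{-1}\varphi$. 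Then $\psi$ lies in the intersection, fixes $H_1 \setminus v$ and $H_2'$ pointwise, and is therefore supported on $O$.
\item Because $\G$ is even, $\psi$ is an even permutation. Since $\psi$ is supported on $O$, its restriction to $O$ is even.
\end{itemize}
It therefore suffices to prove the following claim: $\Lambda_j$ contains every even permutation of $O$ that fixes all points outside $O$.

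\textbf{Step 2 (identify the action of $\Lambda_j$ on $O$).} Here I will use the figures. In the dual of $X(r_2)$, $Y(r_2)$ or $Z(r_2)$, the vertex $v$ is the degree-one endpoint carrying label $m$. The generators $\rho_m, \ldots, \rho_j$ with $j \leq r-2$ omit the last generator $\rho_{r-1}$, which is the generator that joins the two "rows" of the wreath-type graph.

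I expect the $\Lambda_j$-orbit of $v$ to be a path-like piece of the graph. On it, $\Lambda_j$ acts as a symmetric group, possibly alternating in the even cases. To justify this I would use \cref{sym}: a transposition at the end of the path, together with a transitive subgroup fixing that end point, gives the full symmetric group. Alternatively I would argue directly from the linear structure, as in \cite[Lemma 20]{FL11}.

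The other components of $\Lambda_j$ lie outside $O$ and are a few copies of a similar path in the second row.

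\textbf{Step 3 (get $A_O$ inside $\Lambda_j$).} Apply \cref{mix-with-sn} with $\Omega$ the support of $\Lambda_j$ outside $O$ and $\Delta = O$. For $\alpha$ I will take a suitable power of a product $\rho_k\rho_{k+1}$, for example $(\rho_m\rho_{m+1})^2$ or its analogue near $v$. This should act as a $3$-cycle on $O$ and trivially off $O$; that must be checked from the graph, case by case for $X$, $Y$, $Z$. The lemma then gives $(1,\beta) \in \Lambda_j$ for every even $\beta$ on $O$. This proves the claim, so $\psi \in \Lambda_j$, and hence $\varphi = \varphi_1\psi \in \Lambda_j$.

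\textbf{Main obstacle.} The hard part is Steps 2 and 3 in general. I must verify, uniformly in $j$ and across the three families, that $\Lambda_j$ acts on $O$ as $A_{|O|}$ or $S_{|O|}$. I must also exhibit a nontrivial element of $\Lambda_j$ that is trivial off $O$ and is a $3$-cycle on $O$ when $|O| = 4$.

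The endpoint cases need separate care. When $j = m+1$, $O$ may have only three or four points. When $j = r-2$, the orbit may reach the vertex where the two rows meet, where the graphs of $X$, $Y$ and $Z$ differ. I would handle small $|O|$ and these boundary $j$ values by direct inspection. I would run the generic argument for the rest.
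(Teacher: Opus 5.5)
\textbf{There is a genuine gap.} Your Step 1 is sound. However, the claim you reduce to (that $\Lambda_j$ contains every even permutation of $O$ fixing all points outside $O$) is false, so Steps 2 and 3 cannot be carried out.

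\emph{Why the claim fails.} Take the $*X$ case, and read the dual of $X(r_2)=\G(1,r_2-1)$ off the graph $G(1,r_2-1)$. Then $H_2$ consists of two parallel rows of $r_2+1$ vertices, and its columns are the blocks of size $2$. Label $m+\ell$ joins columns $\ell+1$ and $\ell+2$ in both rows, so $v$ and its block-mate both lie on $m$-edges. The only edges joining the two rows carry label $r-3$ and sit in the two columns farthest from $v$. In particular $\rho_{r-1}$ is horizontal, contrary to your Step 2. This has two consequences.
\begin{itemize}
\item For $m+1\le j\le r-3$, the orbit $O$ is the segment of $v$'s row in the first $j-m+2$ columns. $\Lambda_j$ acts on the parallel segment of the other row by exactly the same permutation, so no nontrivial element of $\Lambda_j$ is supported on $O$. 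Hence \cref{mix-with-sn} has no admissible $\alpha$ (for instance $(\rho_m\rho_{m+1})^2$ is a product of two $3$-cycles, one in each row), and $A_O\not\le\Lambda_j$.
\item For $j=r-2$, the orbit $O$ is the first $r_2$ columns of both rows. For $r_2\ge4$ this is the U-shaped graph of \cref{wreath}, so $\Lambda_{r-2}$ acts on $O$ as the imprimitive group $2\wr S_{r_2}$. When $r_2=3$ it acts as $D_6$ on six points. In neither case is the action $A_{|O|}$ or $S_{|O|}$.
\end{itemize}

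\emph{What is missing: the block system.} Your $\psi$ is not only even and supported on $O$. It also lies in $\langle\rho_m,\ldots,\rho_{r-1}\rangle$, so it permutes the columns of $H_2$.
\begin{itemize}
\item When $O$ meets each column in at most one point (the range $m+1\le j\le r-3$ above), this already forces $\psi=1$. For $x\in O$, the block-mate $x'$ lies in $H_2'$ and is fixed by $\psi$. So $\psi$ stabilises $\{x,x'\}$, and hence fixes $x$.
\item For $j=r-2$ and $r_2\ge4$, you need instead that $\Lambda_{r-2}\cong E_{r-3}\mix(2\wr S_{r_2})$ is even. Then its elements fixing the farthest column pointwise are exactly the even block-preserving permutations of $O$, which is what $\psi$ is.
\item The case $r_2=3$ needs the explicit structure of $X(3)$, $Y(3)$, $Z(3)$.
\end{itemize}

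\emph{Comparison with the paper.} This is in substance the paper's proof, which never uses alternating groups on $O$. It shows that the intersection fixes $H_1\setminus v$ and respects the blocks of $H_2$. For $*Y$, $*Z$, and for $*X$ with $j\le r-4$, it also has no action within blocks. So the intersection lies in the symmetric group on the blocks moved by $\langle\rho_m,\ldots,\rho_j\rangle$, which that subgroup already realises. The $*X$ cases $j=r-3$ and $j=r-2$ use the identifications $S_k\times S_2$ and $((2\wr S_k)\times 2)^+$, and $r_2=3$ is checked directly.
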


\begin{proof}

Let $\Lambda = \langle \rho_0, \ldots, \rho_j \rangle \cap \langle \rho_m, \ldots, \rho_{r-1} \rangle$.   It is clear that $ \langle \rho_m, \ldots, \rho_j \rangle \leq \Lambda $, so we just need to show the reverse inclusion.

When $r_2 \geq 4$,  \cite[Lemma 6.6]{FLM12} and \cref{Y/2} show that $X(r_2)$, $Y(r_2)$, and $Z(r_2)$ are isomorphic to $(2 \wr S_{r_2+1})^+$.  Let's consider these cases first, and conclude by examining the cases where $r_2 = 3$.

When we glue any even string C-group to the dual of $X(r_2)$, we say the result is of type $*X$. (Note that in order for $\G$ to be even, the group we glue to $X(r_2)$ must also be even.) Similarly groups of type $*Y$ and $*Z$ are the result of gluing to the dual of $Y(r_2)$ and $Z(r_2)$, respectively.  
First, suppose that $\G$ is type $*Y$ or $*Z$.   
Here $\langle \rho_m, \ldots, \rho_j \rangle$ fixes $H_1 \setminus v$ pointwise, acts trivially within the blocks of $H_2$, and acts as the full symmetric group $S_k$ on the $k$ blocks that it does not fix (using, for example, \cref{sym}). Now,  $\langle \rho_m, \ldots, \rho_{r-1} \rangle$ also fixes $H_1 \setminus v$ pointwise, while acting as an imprimitive group with blocks of size 2 on $H_2$. Since $\Lambda \leq \langle \rho_m, \ldots, \rho_{r-1} \rangle$, then $\Lambda$ does the same. 
On the other hand, $\Lambda \leq \langle \rho_0, \ldots, \rho_j \rangle$, which acts trivially within the blocks of $H_2$, and so $\Lambda$ is a subgroup of the symmetric group on the blocks that are not fixed; these are precisely the $k$ blocks that $\langle \rho_m, \ldots, \rho_j \rangle$ moves. It follows that $\Lambda \leq \langle \rho_m, \ldots, \rho_j \rangle$.

Now suppose $\G$ is type $*X$.

If $j \leq r-4$, then the previous argument about type $*Y$ or $*Z$ also works for $*X$. Suppose $j = r-3$. Let $T$ be the set of points of $H_2$ not in the support of $\rho_{r-1}$. The group $\langle \rho_m, \ldots, \rho_{r-3} \rangle$ has a permutation representation graph that is two copies of a permutation representation graph of a simplex and two $(r-3)$-edges. Thus, the group is a subgroup of $S_k \times S_2$, where $k$ is the number of vertices in one of the large connected components.   Since $(\rho_{r-4} \rho_{r-3})^3$ fixes points in the two copies of the permutation representation graph of the simplex, and moves the points on the $(r-3)$-edges we know that the mix is larger than $S_k$ and thus
 $\langle \rho_m, \ldots, \rho_{r-3} \rangle \cong S_k \times S_2$. Similarly, $\Lambda$ fixes $H_1 \setminus v$ pointwise and acts trivially on the blocks within $T$, while acting as $S_2$ on the support of $\rho_{r-1}$. It follows that $\Lambda \leq \langle \rho_m, \ldots, \rho_{r-3} \rangle$. 

Next, let $j = r-2$.  It follows from \cref{wreath}
that the group $\langle \rho_m, \ldots, \rho_{r-2} \rangle$ is $E_{r-3} \mix (2 \wr S_k)$.
Since the resulting group is even, the transposition for the single edge cannot be in the resulting group, and thus  $\langle \rho_m, \ldots, \rho_{r-2} \rangle \cong 2 \wr S_k \cong  ((2 \wr S_k) \times 2)^+$. Now consider the restriction of $\Lambda$ to each of its two nontrivial orbits.  The restriction of $\Lambda$ to the orbit of $v$ respects the blocks of $\langle \rho_m, \ldots, \rho_{r-1} \rangle$ and thus $\Lambda$ restricted to the orbit of $v$ is a subgroup of $2 \wr S_k$.  The other non-trivial orbit of $\Lambda$ has size two.  Since $\Lambda \leq \G$ is even, $\Lambda \leq ((2 \wr S_k) \times 2)^+$, and we are done.

Finally, suppose that $r_2 = 3$, so that $r = m+3$. We need only consider $j = m+1 = r-2$.

The groups $X(3)$, $Y(3)$, and $Z(3)$ are all even imprimitive groups with 4 blocks of size 2.  The action of the group $Y(3)$ on the blocks is given by the dihedral group $D_4$ (of order 8), whereas for $X(3)$ and $Z(3)$, the action on the blocks is $S_4$.  

It can be verified in GAP that
$X(3)$ and $Z(3)$ are isomorphic to the direct product $S_4 \times S_2$, and
 $Y(3)$ is the even subgroup of the full wreath product $(2 \wr D_4)$.
 
 Let $\G = *X(3)$.  
 Since $\Lambda \leq \langle \rho_m, \rho_{m+1}, \rho_{m+2} \rangle \cong S_4 \times 2$, it follows that $\Lambda$ fixes $H_1\setminus v$ and acts imprimitively on $H_2$ with the blocks of $X(3)$.   Additionally, since 
$\Lambda \leq \langle \rho_0, \ldots, \rho_{m+1} \rangle$, it follows that $\Lambda$ fixes one block of $H_2$.  Thus $\Lambda \leq S_3 \times 2$ acting on the remaining three blocks.  Since $S_3 \times 2 \cong D_6 \cong \langle \rho_m, \rho_{m+1} \rangle$, we are done with this case.
 
 Let $\G = *Y(3)$.
 Since $\Lambda \leq \langle \rho_m, \rho_{m+1}, \rho_{m+2} \rangle$, it follows that $\Lambda$ fixes $H_1\setminus v$ and acts imprimitively on $H_2$ with the blocks of $Y(3)$, where the action of $\Lambda$ on those blocks is a subgroup of $D_4$, the action of $Y(3)$ on the blocks.
 Since $\Lambda \leq \langle \rho_0, \ldots, \rho_{r-2} \rangle$, the action of $\Lambda$ in the blocks is trivial.  
Thus $\Lambda \leq D_4 \cong \langle \rho_m, \rho_{m+1} \rangle.$
 
Let $\G = *Z(3)$.  Since $\Lambda \leq \langle \rho_m, \rho_{m+1}, \rho_{m+2} \rangle$, it follows that $\Lambda$ fixes $H_1\setminus v$ and acts imprimitively on $H_2$ with the blocks of $Z(3)$, where the action of $\Lambda$ on those blocks is a subgroup of $S_4$.   In the permutation representation graph of $Z(3)$ there are two natural rows.  It can be verified in GAP that the subgroup of $ \langle \rho_m, \rho_{m+1}, \rho_{m+2} \rangle$ preserving these two rows setwise is precisely the subgroup $\langle \rho_m, \rho_{m+1} \rangle$.  Since $\Lambda \leq \langle \rho_0, \ldots, \rho_{m+1} \rangle$, and since the only generator in $\langle \rho_m, \rho_{m+1}, \rho_{m+2} \rangle$ that does not preserve the two rows is $\rho_{m+2}$, every element of $\Lambda$ preserves the two rows of $Z(3)$ setwise.  Thus $\Lambda \leq \langle \rho_m, \rho_{m+1} \rangle$.

\end{proof}

\begin{theorem}\label{thm:9families}
Let $\G$ be any sggi of the form: $XX(m,r_2)$,  $XY(m,r_2)$,  $XZ(m,r_2)$,  $YX(m,r_2)$,  $YY(m,r_2)$,  $YZ(m,r_2)$,  $ZX(m,r_2)$,  $ZY(m,r_2)$, or  $ZZ(m,r_2)$.
Then $\G$ is a string C-group isomorphic to $A_n$ for $n=2m+2r_2+3$.  
\end{theorem}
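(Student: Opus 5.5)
The plan is to verify that $\G$ is an even permutation group satisfying \cref{glued-cpr}, and then apply \cref{string-C}. For the group identification, we will use primitivity together with \cref{prime-cycle}.

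First, the hypotheses. Each of $X(m)$, $Y(m)$, $Z(m)$ is a string C-group: this is \cite[Lemma 6.6]{FLM12} (and the remark for $m=3$) and \cref{Y/2}. Each acts on $2m+2$ points by even permutations; the degree-one vertex at the end is attached by an edge of the extreme label. Gluing $\G_1 \in \{X(m),Y(m),Z(m)\}$ to the dual of $\G_2 \in \{X(r_2),Y(r_2),Z(r_2)\}$ at that vertex gives a graph satisfying \cref{glued-pr}. Here $H_1$ carries labels $0,\ldots,m-1$ and $H_2$ carries labels $m,\ldots,r-1$, and both halves are string C, so \cref{glued-cpr} holds. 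Evenness of $\G$ is read off generator by generator. A label $k\le m-1$ occurs only in $H_1$, and its edge count there equals the edge count of the corresponding generator of $\G_1$, which is even since $\G_1$ is an even permutation group. The same argument applies to labels in $H_2$. The point count is $(2m+2)+(2r_2+2)-1 = 2m+2r_2+3 = n$.

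Next, the two intersection conditions of \cref{string-C}. Condition (2) is exactly \cref{glueXYZ}, since $\langle \rho_m,\ldots,\rho_{r-1}\rangle$ is the dual of $X(r_2)$, $Y(r_2)$ or $Z(r_2)$. Condition (1) follows from \cref{glueXYZ} applied to the dual group $\G^*$. That group is again an even sggi satisfying \cref{glued-cpr}, with roles swapped: its second half is the dual of $\G_1$. Translating indices back ($j' = r-1-i$) gives, for $1\le i\le m-2$, exactly $\langle\rho_0,\ldots,\rho_{m-1}\rangle\cap\langle\rho_i,\ldots,\rho_{r-1}\rangle=\langle\rho_i,\ldots,\rho_{m-1}\rangle$. \cref{string-C} then shows that $\G$ is a string C-group.

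Finally, the isomorphism type. The graph is connected, so $\G$ is transitive. Taking $i=0$ and $j=r-1$ in \cref{glued-primitive}, the orbit of $v$ under $\G$ is all $n$ points, so $\G$ restricted to it, which is $\G$ itself, is $A_n$ or $S_n$. Since $\G$ is even, $\G\cong A_n$. The main obstacle is the bookkeeping in the duality step for condition (1). One must check carefully that $\G^*$ still meets the hypotheses of \cref{glueXYZ}, including $m\ge 3$ playing the role of $r_2$. One must also check that the index range $m+1\le j'\le r-2$ corresponds exactly to $1\le i\le m-2$. The rest is a direct assembly of earlier results.
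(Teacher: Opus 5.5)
Your proposal is correct and follows the paper's proof essentially step for step. You verify \cref{glued-cpr} and evenness, take condition (2) of \cref{string-C} from \cref{glueXYZ}, get condition (1) by applying \cref{glueXYZ} to the dual (which is again one of the nine families), and identify $\G \cong A_n$ via \cref{glued-primitive} together with transitivity and evenness. The explicit index translation $i = r-1-j'$ and the generator-by-generator parity check are details the paper leaves implicit, and both check out.
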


\begin{proof}
    Let $\G = \langle \rho_0, \ldots , \rho_{r-1} \rangle$ have the form of one of the nine stated families.    By construction, $\G$ satisfies \cref{glued-cpr}, since both glued pieces are string C-groups by  \cite[Lemma 6.6]{FLM12} and \cref{Y/2}.   
From \cref{glued-primitive}, we know that $\G$ is an alternating or symmetric group on the orbit of $v$, and since $\G$ is even and transitive, we see that $\G \cong A_n$.  To show that $\G$ is a string C-group, we use \cref{string-C}.  The second criterion in \cref{string-C} follows from \cref{glueXYZ}.  Since the dual of each of these families is again one of the nine families, the first criterion follows from applying \cref{glueXYZ} to the dual of $\G$.  
\end{proof}

\begin{corollary} \label{cor:3examples}
There are at least three self-dual string C-groups of rank $\frac{n-3}{2}$ isomorphic to $A_n$ when $n \geq 15$ and $n \mod 4 = 3$.  \end{corollary}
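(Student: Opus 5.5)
The plan is to apply \cref{thm:9families} to the three self-dual families $XX(m,m)$, $YY(m,m)$ and $ZZ(m,m)$, and then show that these three are pairwise distinct.

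First, fix $n \geq 15$ with $n \equiv 3 \pmod 4$, and write $n = 4m+3$ with $m \geq 3$. Taking $r_2 = m$ in \cref{thm:9families}, each of these three groups is a string C-group isomorphic to $A_n$, since $2m+2m+3 = n$. Its rank is $r = 2m = \frac{n-3}{2}$. The hypothesis $m \geq 3$ is exactly what \cref{thm:9families} needs, because $X(m)$, $Y(m)$, $Z(m)$ are string C-groups for $m \geq 3$ by \cite[Lemma 6.6]{FLM12} and \cref{Y/2}.

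Second, self-duality. Each of these graphs is obtained by gluing a graph to its own dual at the vertex $v$. The graph therefore has a symmetry that swaps $H_1$ and $H_2$, fixes $v$, and sends each label $i$ to $r-1-i$. As noted in the background, conjugation by this vertex permutation is an automorphism of $\G$ sending $\rho_i$ to $\rho_{r-1-i}$. So all three string C-groups are self-dual.

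The main obstacle is showing that the three examples are genuinely distinct string C-groups, i.e.\ not isomorphic by a map sending generators to generators. I would first try to separate them by an invariant of the generating tuple: the type (Schl\"afli symbol), or failing that the isomorphism types of the facet and vertex-figure subgroups $\G_{r-1}$ and $\G_0$. For instance, the first entry $|\rho_0\rho_1|$ is read off from the pieces $X(m)$, $Y(m)$, $Z(m)$ of \cref{fig:xyz} and may already differ.

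If the types coincide, I would compare the subgroup $\langle \rho_0,\dots,\rho_{m-1}\rangle$ instead. Its action on $H_1$ is $X(m)$, $Y(m)$ or $Z(m)$ respectively, and I would use the restriction to the orbit of $v$ as the invariant. Another option is $\langle \rho_0, \rho_1, \rho_2 \rangle$: its restriction is, up to the added vertex, the rank 3 group $X(3)$, $Y(3)$ or $Z(3)$, and these have block actions $S_4$, $D_4$ and $S_4$ respectively. This separates $YY$ from the other two. For $XX$ versus $ZZ$, I would use the differing structure of the rank 3 pieces (the map $(3,6)_{(2,0)}$ versus the octahedron), which is visible in the type entry $|\rho_0\rho_1|$.

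Any one of these invariants differing shows that no generator-preserving isomorphism exists. That gives at least three distinct self-dual string C-groups of rank $\frac{n-3}{2}$ for $A_n$.
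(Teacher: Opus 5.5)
Your first two steps match the paper. \cref{thm:9families} with $r_2=m$ gives string C-groups isomorphic to $A_n$ of rank $2m=\frac{n-3}{2}$, and the label-reversing symmetry of the glued graph gives self-duality. The gap is in the distinctness step, which is the only part of the corollary not already contained in \cref{thm:9families}.

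You never commit to an invariant and compute it. The one concrete claim you make, that $XX(m,m)$ and $ZZ(m,m)$ differ in $|\rho_0\rho_1|$, is false. $X(3)$ is the map $\{3,6\}_{(2,0)}$ and $Z(3)$ is the octahedron $\{3,4\}$, so both have triangular faces and $|\rho_0\rho_1|=3$. The first type entry therefore separates only $YY$ (where it is $4$) from the other two. The difference between $XX$ and $ZZ$ lies in $|\rho_1\rho_2|$, which is $6$ versus $4$.

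The repair is exactly the paper's argument: compute the Schl\"afli types. Since $|\rho_i\rho_{i+1}|$ with $i+1\le m-1$ involves only edges of $H_1$, the first entries are those of the pieces. One gets $(|\rho_0\rho_1|,|\rho_1\rho_2|)=(3,6)$, $(4,4)$, $(3,4)$ for $XX$, $YY$, $ZZ$, which are pairwise different. The full types are:
\begin{itemize}
\item for $m\ge 4$: $\{3,6^2,3^{m-4},6,3^{m-4},6^2,3\}$, $\{4^2,6,3^{m-4},6,3^{m-4},6,4^2\}$ and $\{3,4,6,3^{m-4},6,3^{m-4},6,4,3\}$;
\item for $m=3$: $\{3,6^3,3\}$, $\{4^2,6,4^2\}$ and $\{3,4,6,4,3\}$.
\end{itemize}
The type is preserved by any isomorphism sending each $\rho_i$ to $\rho_i'$, so this settles distinctness.

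Your fallback invariant, the action of $\langle\rho_0,\rho_1,\rho_2\rangle$ on blocks ($S_4$ versus $D_4$), is a feature of a particular permutation representation rather than of the string C-group. It would need an extra argument before you could use it, for instance that for $n\neq 6$ every index-$n$ subgroup of $A_n$ is a point stabilizer, so isomorphisms here are permutational. The type makes that detour unnecessary.
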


\begin{proof}
The families $XX(m,m), YY(m,m)$, and $ZZ(m,m)$ are all self-dual string C-groups of rank $r = 2m$ acting as $A_n$ with $n = 4m+3$.
They are pairwise non-isomorphic as can be seen from their types:  $XX(m,m)$ has type $\{3,6^3,3\}$ when $m=3$, and $\{3,6^2, 3^{m-4}, 6 , 3^{m-4}, 6^2, 3\}$ otherwise;
$YY(m,m)$ has type $\{4^2,6,4^2\}$ when $m=3$ and $\{4^2, 6, 3^{m-4},6,3^{m-4},6,4^2\}$ otherwise;
$ZZ(m,m)$ has type $\{3,4,6,4,3\}$ when $m=3$ and $\{3,4,6,3^{m-4},6,3^{m-4},6,4,3\}$ otherwise.

\end{proof}

\section{A more general family of sggis} \label{sec:moreboxes}
The groups $X(m)$ gave us a one-parameter family of string C-groups that could be glued to their duals to construct self-dual string C-groups for the alternating group.  In fact, we can generalize these groups to be part of a larger two-parameter family that will similarly be useful to construct self-dual string C-groups for the alternating group.

We consider the following permutation representation graph on $n = 2b+2t+2$ nodes, with $b \geq 1$ ``boxes'' and with ``tails'' of length $t \geq 2$:
\[ 
\xymatrix@-1pc{
*+[o][F]{1} \ar@{-}[r]^0 \ar@{-}[d]^{b+1} 
& *+[o][F]{3} \ar@{-}[r]^1 \ar@{-}[d]^{b+1} 
& *+[o][F]{5} \ar@{-}[d]^{b+1} \ar@{--}[rr]  
&& *+[o][F]{2b-1} \ar@{-}[r]^{b-1} \ar@{-}[d]^{b+1}
& *+[o][F]{2b+1} \ar@{-}[r]^b \ar@{-}[d]^{b+1} 
& *+[o][F]{2b+3} \ar@{-}[r]^{b+1} 
& *+[o][F]{2b+5} \ar@{-}[r]^{b+2} 
& *+[o][F]{2b+7} \ar@{--}[rr] 
&&  *+[o][F]{n-3} \ar@{-}[rr]^{b+t-1}
&&  *+[o][F]{n-1} \\
*+[o][F]{2} \ar@{-}[r]^0 
& *+[o][F]{4} \ar@{-}[r]^1
& *+[o][F]{6} \ar@{--}[rr]
&& *+[o][F]{2b} \ar@{-}[r]^{b-1} 
& *+[o][F]{2b+2} \ar@{-}[r]^b 
& *+[o][F]{2b+4} \ar@{-}[r]^{b+1}
& *+[o][F]{2b+6} \ar@{-}[r]^{b+2}
& *+[o][F]{2b+8} \ar@{--}[rr]
&& *+[o][F]{n-2} \ar@{-}[rr]^{b+t-1}
&& *+[o][F]{n}
}
\]

We will call this graph $G(b,t)$ and the corresponding group $\G(b,t)$.   Note that $\G(1,t) = X(t+1)$. Our first goal is to establish that $\G(b,t)$ is a string C-group, and to determine its structure.

\begin{proposition} \label{Gbt-base-case}
$\G(b,2)$ is a string C-group for all $b \geq 1$.
\end{proposition}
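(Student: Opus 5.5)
We argue by induction on $b$, using \cref{max}. Throughout, I read the vertical (rung) edges of $G(b,t)$ as carrying the last label $b+t$, so that $\G(b,t)$ has rank $r=b+t+1$. For $t=2$ this gives rank $r=b+3$, with generators acting as follows:
\begin{itemize}
\item $\rho_0,\ldots,\rho_b$ are the horizontal edges, appearing identically in the two rows of the box part;
\item $\rho_{b+1}$ consists of the two tail edges $(2b+1,2b+3)$ and $(2b+2,2b+4)$;
\item $\rho_{b+2}=\rho_{r-1}$ consists of the $b+1$ rungs $(1,2),(3,4),\ldots,(2b+1,2b+2)$.
\end{itemize}
The base case is $b=1$, where $\G(1,2)=X(3)$. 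This was noted in \cref{sec:families} to be a string C-group, namely the group of the map $(3,6)_{(2,0)}$ acting on its faces.

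For the inductive step, let $b\geq 2$ and write $\G=\G(b,2)$. We check the three hypotheses of \cref{max} in turn.

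First, $\G_{r-1}=\langle\rho_0,\ldots,\rho_{b+1}\rangle$. Its permutation representation graph is two identical disjoint copies of the simplex path on $b+3$ vertices with labels $0,1,\ldots,b+1$. By the remark on duplicate components in the discussion of the mix, $\G_{r-1}$ is just the simplex group $S_{b+3}$, which is string C.

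Second, $\G_{0,r-1}=\langle\rho_1,\ldots,\rho_{b+1}\rangle$. On each row it is the stabilizer of the end point ($1$ or $2$), acting identically on both rows. Hence it is the point stabilizer $S_{b+2}$ in $S_{b+3}$, and it is therefore maximal in $\G_{r-1}$.

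Third, we need $\G_0=\langle\rho_1,\ldots,\rho_{b+2}\rangle$ to be string C and $\rho_0\notin\G_0$. Deleting the $0$-edges leaves two components:
\begin{itemize}
\item the single rung $(1,2)$ of label $r-1$;
\item a copy of $G(b-1,2)$ on the remaining $2b+4$ vertices, with every label shifted up by $1$.
\end{itemize}
So $\G_0\cong \G(b-1,2)\mix E_{r-1}$, with indices shifted. By the induction hypothesis $\G(b-1,2)$ is string C, and mixing with an edge carrying the last label preserves this by \cref{mix-edge}. Hence $\G_0$ is string C. Moreover, the set $\{1,2\}$ is an orbit of $\G_0$, because $\rho_1,\ldots,\rho_{b+1}$ fix $1$ and $2$ while $\rho_{b+2}$ swaps them. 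But $\rho_0$ sends $1$ to $3$, so $\rho_0$ does not preserve $\{1,2\}$ and $\rho_0\notin\G_0$.

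With all hypotheses verified, \cref{max} gives that $\G(b,2)$ is string C, which completes the induction.

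The main point needing care is the identification $\G_0\cong\G(b-1,2)\mix E_{r-1}$. One must check that after removing the first column, the remaining graph really is $G(b-1,2)$ with shifted labels. In particular the rungs must still carry the last label, and the tail must be attached at the correct column.

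If the rung labels in the figure are instead meant as $b+1$ and the tail labels are read differently, the same argument should still go through, with the roles of the labels adjusted. The essential structure is unchanged: deleting the first column leaves a smaller instance of the same family together with an isolated edge.
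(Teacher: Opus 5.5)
Your strategy is the paper's: induction on $b$ from $\G(1,2)=X(3)$, then \cref{max}. In that step $\G_{r-1}$ is a simplex group, $\G_{0,r-1}$ is a point stabilizer and hence maximal, and $\G_0\cong\G(b-1,2)\mix E_{r-1}$ is string C by induction and \cref{mix-edge}. Finally $\rho_0\notin\G_0$ because $\{1,2\}$ is a $\G_0$-orbit.

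What needs fixing is your reading of the labels, and this changes several of your stated facts.

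\begin{itemize}
\item In $G(b,t)$ the rungs carry label $b+1$, not $b+t$, and the rank is $b+t$. So for $t=2$ the rank is $r=b+2$.
\item The path $1,3,\ldots,2b+5$ carries labels $0,\ldots,b+1$, with $(2b+1,2b+3)$ labeled $b$.
\item The last generator is $\rho_{b+1}=(1,2)(3,4)\cdots(2b+1,2b+2)(2b+3,2b+5)(2b+4,2b+6)$. The rungs share their label with the final tail edges.
\end{itemize}

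A separate rung label $b+2$ is in fact impossible. The rungs move $2b+1,2b+2$ but fix $2b+3,2b+4$, while $\rho_b$ swaps $2b+1\leftrightarrow 2b+3$ and $2b+2\leftrightarrow 2b+4$. So $\rho_b\rho_{b+2}$ would be a $4$-cycle, and your literal reading is not even an sggi.

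Consequently, deleting label $r-1$ removes the final tail edges as well as the rungs. This gives $\G_{r-1}\cong S_{b+2}$: two copies of a path on $b+2$ vertices, plus the fixed points $2b+5,2b+6$. Likewise $\G_{0,r-1}\cong S_{b+1}$. You wrote $S_{b+3}\supset S_{b+2}$ instead.

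Your description of $\G_0$ (the isolated rung $(1,2)$ of the last label plus a shifted copy of $G(b-1,2)$) is correct as stated, and so is the orbit argument for $\rho_0\notin\G_0$. With these sizes adjusted, your proof coincides with the paper's.
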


\begin{proof}
The result can be verified for $b=1$ using GAP.

Let $\G = \G(b,2)$ with $b \geq 2$ and suppose that $\G(k,2)$ is a string C-group for every $k < b$. First, note that $\G_{r-1}=\G_{b+1} \cong S_{b+2}$ and that this is the usual representation of a simplex and is clearly a string C-group. 

Next, we see that $\G_0$ is a mix of (a shifted version of) $\G(b-1,2)$ with $E_{b+1}$. By inductive hypothesis, $\G(b-1,2)$ is a string C-group, and by Lemma~\ref{mix-edge}, so is the mix. Thus $\G_0$ and $\G_{r-1}$ are both string C-groups. Then since $\G_{0,b+1} \cong S_{b+1}$, which is maximal in $\G_{b+1} \cong S_{b+2}$, and clearly $\rho_0 \not \in \G_0$, \cref{max} says that $\G$ is a string C-group.
\end{proof}

\begin{proposition} \label{b-orientable}
Let $b \geq 1$ and $t \geq 2$. Then $\G(b,t)$ is $(b+1)$-orientable if and only if $b$ is even or $t$ is $2$.
\end{proposition}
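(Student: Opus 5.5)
The plan is to handle the two ``if'' directions with the sufficient criteria of \cref{mix-is-iso} and \cref{k-coloring}, and the ``only if'' direction by exhibiting $\rho_{b+1}$ as an element of $\G(b,t)^{(b+1)}$.

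First I count edges in $G(b,t)$. For $i \leq b$ and for $i \geq b+2$, $\rho_i$ consists of exactly two edges, one in each row, so it is even. The generator $\rho_{b+1}$ consists of the $b+1$ vertical edges of the boxes, joining $2k-1$ and $2k$ for $1 \leq k \leq b+1$, together with the two horizontal tail edges $\{2b+3,2b+5\}$ and $\{2b+4,2b+6\}$. This is $b+3$ transpositions, so $\rho_{b+1}$ is odd exactly when $b$ is even.

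If $b$ is even, then $\rho_{b+1}$ is the unique odd generator, and \cref{mix-is-iso} gives $(b+1)$-orientability immediately. If $t=2$, then $n = 2b+6$ and every vertex is moved by $\rho_{b+1}$. I colour the vertices as follows: the top-row box vertices $1,3,\ldots,2b+1$ and the vertices $2b+3$ and $2b+6$ are black; the bottom-row box vertices and $2b+4$, $2b+5$ are white. One checks that every $\rho_i$ with $i \leq b$ joins vertices of the same colour. For $i<b$ this is because those edges run along the rows; for $\rho_b$ the edges are $\{2b+1,2b+3\}$ and $\{2b+2,2b+4\}$. Every edge of $\rho_{b+1}$ joins a black vertex to a white one. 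Hence \cref{k-coloring} applies.

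For the converse, assume $b$ is odd and $t \geq 3$; I want $\rho_{b+1} \in \G^{(b+1)}$. The subgroup $\G^{(b+1)}$ contains every word with an even number of occurrences of $\rho_{b+1}$. So it suffices to produce two words representing the same permutation whose numbers of occurrences of $\rho_{b+1}$ have opposite parities. The tool is to compute small powers of the products $\rho_b\rho_{b+1}$ and $\rho_{b+1}\rho_{b+2}$. Since $t \geq 3$, the label $b+2$ really exists in the tail. On each row the labels $b+1,b+2$ form a path on the three vertices $2b+3,2b+5,2b+7$ (respectively $2b+4,2b+6,2b+8$), giving a $3$-cycle there, while the box vertices are moved only by $\rho_{b+1}$. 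Hence $(\rho_{b+1}\rho_{b+2})^3 = V$, the product of the $b+1$ vertical box transpositions. This word has six occurrences of $\rho_{b+1}$, so $V \in \G^{(b+1)}$, and therefore $\rho_{b+1} \in \G^{(b+1)}$ if and only if the horizontal pair $h = \rho_{b+1}V = (2b+3,2b+5)(2b+4,2b+6)$ lies in $\G^{(b+1)}$.

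Similarly, $\rho_b\rho_{b+1}$ acts as a $6$-cycle on the path $2b+5,2b+3,2b+1,2b+2,2b+4,2b+6$ and as the remaining $b$ vertical transpositions elsewhere. So $(\rho_b\rho_{b+1})^3$ is a word with an odd number of occurrences of $\rho_{b+1}$ equal to an explicit involution. It is a product of three ``antipodal'' transpositions on that path and $b$ vertical transpositions, the latter being an odd number because $b$ is odd.

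The main obstacle is the final step: multiplying this involution by elements already known to lie in $\G^{(b+1)}$ to reach either $h$ or the identity. The available elements are $V$, the $\rho_i$ with $i \neq b+1$, and the conjugates $\rho_{b+1}\rho_b\rho_{b+1}$ and $\rho_{b+1}\rho_{b+2}\rho_{b+1}$. I would first try conjugating by elements of $\langle \rho_0,\ldots,\rho_{b-1}\rangle$, which permute the box columns and commute with $V$, together with the tail generators, so as to pair off the vertical transpositions. This is exactly where the hypothesis $b$ odd must be used, since the pairing only closes up when $b$ is odd. If the direct manipulation resists, the fallback is to argue via \cref{k-orientable}(3): show that $\G \mix E_{b+1}$ is strictly larger than $\G$. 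For instance, $\G$ is even, and I would show that the mix contains an element acting trivially on $G(b,t)$ but as the added transposition, by using the tail to isolate the horizontal pair $h$.
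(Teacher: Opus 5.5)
Your two ``if'' directions are correct and coincide with the paper's: the same colouring with \cref{k-coloring} for $t=2$, and \cref{mix-is-iso} for even $b$. The converse, however, has a genuine gap, and it begins with a parity error. The word $(\rho_{b+1}\rho_{b+2})^3$ contains $\rho_{b+1}$ three times, not six. So $V$ lies in the coset $\rho_{b+1}\G^{(b+1)}$, and your claim $V\in\G^{(b+1)}$ is unjustified and in general false. To see this, note that $h=\rho_{b+1}(\rho_{b+1}\rho_{b+2})^3$ is a word with four occurrences of $\rho_{b+1}$, so $h\in\G^{(b+1)}$ for every $b$. If also $V\in\G^{(b+1)}$, then $\rho_{b+1}=Vh\in\G^{(b+1)}$ for every $b$ and every $t\geq 3$, contradicting the even-$b$ case you just proved. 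Your reduction is therefore backwards: the correct statement is that $\rho_{b+1}\in\G^{(b+1)}$ if and only if $V\in\G^{(b+1)}$. Beyond this, the decisive step is never carried out, and the fallback via \cref{k-orientable}(3) only restates what has to be shown.

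The paper closes the argument with a single relator. For odd $b$, $\sigma=\rho_0\rho_1\cdots\rho_{b+2}$ is a product of two disjoint $(b+4)$-cycles, so $\sigma^{b+4}=1$ is a relation in which $\rho_{b+1}$ occurs an odd number of times.

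Your own ingredients also suffice once the parity is corrected. Write $s_k=(2k-1,2k)$ for the swap inside the $k$-th block, so $V=s_1\cdots s_{b+1}$. The three ``antipodal'' transpositions of $(\rho_b\rho_{b+1})^3$ are exactly $s_{b+1},s_{b+2},s_{b+3}$, so $U=(\rho_b\rho_{b+1})^3=s_1\cdots s_{b+3}$. The element $UV=s_{b+2}s_{b+3}$ is given by a word with six occurrences of $\rho_{b+1}$, so it lies in $\G^{(b+1)}$. Now conjugate successively by $\rho_b,\rho_{b-1},\ldots,\rho_0\in\G^{(b+1)}$, each of which exchanges two adjacent blocks. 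This yields $s_ks_{b+3}\in\G^{(b+1)}$ for all $k\leq b+2$, hence $s_js_k\in\G^{(b+1)}$ for all $j,k\leq b+3$. Since $b$ is odd, $V=(s_1s_2)(s_3s_4)\cdots(s_bs_{b+1})\in\G^{(b+1)}$. But $V=(\rho_{b+1}\rho_{b+2})^3$ is also a word with an odd number of occurrences of $\rho_{b+1}$, so $\rho_{b+1}\in\G^{(b+1)}$. This is the ``pairing off'' you anticipated, and it is exactly where the oddness of $b$ enters.
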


\begin{proof}

First, suppose $t = 2$. Then every vertex is incident to an edge of label $b+1$, and if we color the vertices $\{1, 3, \ldots, 2b+3\}$ and $2b+6$ black and the remaining vertices white, then $\rho_{b+1}$ reverses the colors while every other $\rho_j$ fixes the colors, so $\G(b,t)$ is $(b+1)$-orientable by \cref{k-coloring}. 

If $b$ is even, then $\rho_{b+1}$ is the only odd permutation, and so $\G$ is $(b+1)$-orientable by \cref{mix-is-iso}.

Finally, suppose $b$ is odd and $t \geq 3$. Let $\sigma = \rho_0 \rho_1 \cdots \rho_{b+1} \rho_{b+2}$. If $b = 1$, we have $\sigma = (1,9,7,5,4)(2,10,8,6,3)$ which has odd order. When $b \geq 3$, $\sigma$ fixes vertex $k$ for $k \geq 2b+9$. For odd $k$ between $3$ and $2b+3$, we have $\sigma(k) = k-1$, and for even $k$ between $4$ and $2b+4$, we have $\sigma(k) = k-3$. Then, noting that $2b+2$ is a multiple of $4$ since $b$ is odd, and $\sigma^2(k) = k-4$ for all $5 \leq k \leq 2b+4$, we get that
\[ \sigma = (1, 2b+7, 2b+5, 2b+3, 2b+2, 2b-1, 2b-2, \ldots, 8, 5, 4) (2, 2b+8, 2b+6, 2b+4, 2b+1, 2b, 2b-3, \ldots, 6, 3). \]

Thus, $\sigma$ is the product of disjoint $(b+4)$-cycles, so it has order $b+4$, which is odd. Then the relation $\sigma^{b+4} = 1$ can be solved for $\rho_{b+1}$ to give an expression for $\rho_{b+1}$ as a word in $\G^{(b+1)}$, showing that $\G$ is not $(b+1)$-orientable by \cref{k-orientable}.
\end{proof}

\begin{theorem} \label{thm:boxes}
For all $b \geq 1$ and $t \geq 2$, the group $\G(b,t)$ is string C.  Furthermore, when $t=2$, $\G(b,t) \cong 2 \times S_{b+3}$.  When $t\geq 3$ and $b$ is even then $\G(b,t) \cong 2 \wr S_{b+t+1}$, and when $t\geq 3$ and $b$ is odd then $\G(b,t) \cong (2 \wr S_{b+t+1})^+$.
\end{theorem}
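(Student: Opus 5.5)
I will prove the string C property by induction on $t$, with \cref{Gbt-base-case} as the base case $t=2$. I will then determine the group structure separately, using the orientability information in \cref{b-orientable}.

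\textbf{String C property.} Let $\G=\G(b,t)$ with $t\geq 3$, and let $r=b+t$.
\begin{enumerate}
\item The subgroup $\G_{r-1}=\langle\rho_0,\ldots,\rho_{r-2}\rangle$ has permutation representation graph $G(b,t-1)$, mixed with $E_{r-2}$: deleting the last two tail vertices leaves the single $(r-2)$-edge that joins $n-3$ and $n-2$ as an extra component. By induction $\G(b,t-1)$ is string C, and its last generator is $\rho_{r-2}$, so \cref{mix-edge} applied with $E_{r-2}$ shows that $\G_{r-1}$ is string C.
\item The subgroup $\G_0=\langle\rho_1,\ldots,\rho_{r-1}\rangle$ is, for $b\geq 2$, a shifted copy of $G(b-1,t)$ together with the isolated $0$-edge, which disappears. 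More precisely, $\G_0$ is (shifted) $\G(b-1,t)$ on the vertices $3,\ldots,n$ plus the fixed vertical $(b+1)$-edge $\{1,2\}$, i.e.\ a mix of $\G(b-1,t)$ with $E_{b}$ after shifting. This label is an interior label of the shifted group, so \cref{mix-edge} does not apply. I would instead apply \cref{mix-ek-string-c}: $(\G(b-1,t))_0$ is the relevant box-type group, and I would check its $(b-1)$-orientability with a coloring as in \cref{k-coloring}. Alternatively, I would apply item~2 of \cref{mix-ek-string-c} with \cref{b-orientable} for the smaller parameters. For $b=1$, $\G_0$ is a tail of transposition-pair type, which is the dual of the wreath graph in \cref{wreath}, or directly a string C group with sesqui-edge.
\item I would finish with \cref{max}. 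The subgroup $\G_{0,r-1}$ should be maximal in $\G_{r-1}$, or dually $\G_{0,r-1}$ should be maximal in $\G_0$. Since $\G_{r-1}$ is the imprimitive wreath-type group, maximality is awkward. I would therefore apply \cref{max} in its dual form: $\rho_{r-1}\notin\G_{r-1}$ (the last tail edges are moved only by $\rho_{r-1}$, because $\G_{r-1}$ fixes $n-1,n$), and $\G_{0,r-1}$ is maximal in $\G_0$. The latter follows the pattern of \cite[Lemma 6.6]{FLM12}, where $\G_0$ acts as $S_{k}$ on blocks and $\G_{0,r-1}$ is a point stabilizer of a block.
\end{enumerate}
\textbf{Main obstacle.} I expect the main obstacle to be establishing that $\G_0$ is string C for general $b$, together with the maximality statement. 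Where the interior-edge mix creates trouble, I would instead check the intersection condition directly via \cref{int-cond}, arguing on block systems as in \cref{glueXYZ}.

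\textbf{Structure.} For $t=2$, $\G$ acts on the vertical pairs as $S_{b+3}$: there are $b+3$ pairs $\{1,2\},\ldots,\{2b+5,2b+6\}$, paired through the $(b+1)$-edges. The $b+1$ edge swaps all pairs simultaneously. Using the coloring from \cref{b-orientable}, the block action together with the central swap gives $2\times S_{b+3}$.

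For $t\geq3$, the blocks of size $2$ are the $b+t+1$ columns, and the action on blocks is $S_{b+t+1}$ by \cref{sym}. To obtain the full base group, I would argue as in \cref{Y/2}: conjugating within-block swaps by $\rho_{r-1}$ produces all even within-block patterns, so $\G\geq(2\wr S_{b+t+1})^+$. Then $\G$ is the full wreath product exactly when it contains an odd permutation, which happens iff $b$ is even, since for $b$ even $\rho_{b+1}$ has an odd number of edges. Equivalently, the index-$2$ dichotomy matches \cref{b-orientable}.
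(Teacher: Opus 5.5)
\textbf{The gap is in your step 3: the maximality hypothesis of the dual form of \cref{max} is false.} That form needs $\G_{0,r-1}$ to be maximal in $\G_0$, and this fails for every $b\ge 2$, $t\ge 3$. The group $\G_0$ acts on the columns $\{3,4\},\ldots,\{n-1,n\}$ with nontrivial action inside columns (for instance $\G_0\cong\G(b-1,t)\cong 2\wr S_{b+t}$ when $b$ is odd). By contrast, $\G_{0,r-1}$ fixes $n-1$ and $n$ \emph{pointwise}. Some element of the base group of $\G_0$ swaps $n-1$ and $n$ while stabilizing the last column. Hence $\G_{0,r-1}$ lies inside the pointwise stabilizer of the last column, which is proper in the setwise stabilizer, which in turn is proper in $\G_0$.

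Concretely, take $b=2$, $t=3$:
\begin{itemize}
\item $\G_0\cong(2\wr S_5)^+\times 2$ has order $3840$;
\item the stabilizer of the point $11$ has order $384$;
\item $\G_{0,r-1}\cong 2\times S_4$ has order $48$.
\end{itemize}
Your analogy with \cite[Lemma 6.6]{FLM12} breaks exactly because $\G_{0,r-1}$ is a pointwise, not a setwise, block stabilizer. The fallback (``check the intersection condition directly'') does not name an argument. Nor can you simply switch to the non-dual form in general. For $b$ even and $t\ge 4$, $\G_{0,r-1}\cong(2\wr S_{b+t-1})^+\times 2$ has index $2$ in the stabilizer of the first column of $\G_{r-1}\cong 2\wr S_{b+t}$. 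So some non-maximality argument is unavoidable.

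\textbf{What the paper does.} It inducts on $b+t$, not on $t$ alone, since you need $\G(b-1,t)$ for $\G_0$. The case $b=1$ is $X(t+1)$, covered by \cite[Lemma 6.6]{FLM12}. The isomorphism types are proved \emph{first}, because they drive the intersection argument; your plan to treat the structure ``separately'' would not work.
\begin{itemize}
\item For $t=3$, the paper uses the non-dual form of \cref{max} that you set aside. Here $\G_{r-1}=\G(b,2)\cong 2\times S_{b+3}$, and $\G_{0,r-1}\cong\G(b-1,2)\mix E\cong 2\times S_{b+2}$ because $\G(b-1,2)$ is $b$-orientable. This subgroup is maximal.
\item For $t\ge 4$, the argument is direct. $\G_0\cap\G_{r-1}$ fixes $n-1$ and $n$, so it lies in the pointwise stabilizer of the last column in $\G_0\cong\G(b-1,t)\mix E$, where $E$ is the isolated edge $\{1,2\}$. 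By the isomorphism types, that stabilizer inside $\G(b-1,t)$ is $\G(b-1,t-1)$. Since $\G(b-1,t)$ and $\G(b-1,t-1)$ have the same orientability when $t-1\ge 3$, the coupling with $E$ agrees, and the stabilizer is exactly $\G_{0,r-1}$.
\end{itemize}

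\textbf{Two smaller points.}
\begin{itemize}
\item For $b$ even, $\G_0$ is string C by item 3 of \cref{mix-ek-string-c} together with \cref{k-mix}. Item 2 fails for $t\ge 4$, and \cref{k-coloring} does not apply because the box generator fixes tail vertices.
\item In the structure part, ``full wreath product iff $\G$ contains an odd permutation'' is not a valid criterion. The set $\{(\epsilon,\sigma) : (-1)^{|\epsilon|}=\mathrm{sgn}(\sigma)\}$ is another index-$2$ subgroup of $2\wr S_k$. It contains the even base group, maps onto $S_k$, and contains odd permutations. You need an odd-weight base element such as $(\rho_{b+1}\rho_b)^3$, which swaps the first $b+3$ columns internally. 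This element, not $\rho_{b+1}$, is also the central swap when $t=2$.
\end{itemize}

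Your coloring idea for $t=2$ does work, and it is a nice alternative to the paper's quotient argument. Each column has one black and one white vertex, so the column action together with the color character embeds $\G$ into $S_{b+3}\times C_2$.
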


\begin{proof}
First we prove that the groups have the given isomorphism types. 
Let $t=2$.  The element $z =(\rho_{b+1} \rho_{b})^3$ has no action between the blocks and interchanges the elements in each of the $b+3$ blocks of size two.  Also $z$ commutes with all the generators of $\G$.  Consider the quotient $\bar{\G} = \G / \langle z \rangle = \langle \bar{\rho_0}, \ldots, \bar{\rho_{b+1}} \rangle$, where $\bar{\rho_i}$ is the image of $\rho_i$ under the quotient map.  The group $\langle \bar{\rho_0}, \ldots, \bar{\rho_{b+1}} \rangle$ is still an sggi.  By the choice of $z$, $(\bar{\rho_{b+1}} \bar{ \rho_{b}})^3 = 1_{\bar{\G}}$. Thus for all the generators of $\bar{\G}$ we have that $(\bar{\rho_i} \bar{ \rho_{i+1}})^3 = 1_{\bar{\G}}$, and $\bar{\G}$ satisfies the exact same relations as the simplex.  Since the simplex has no nontrivial quotients that preserve these relations, $\bar{\G} \cong S_{b+3}$.  
Since $\G$ has a subgroup $ \langle \rho_0, \ldots, \rho_{b}, z \rho_{b+1} \rangle$ isomorphic to $S_{b+3}$, it follows that $\G$ is a direct product.  $\G \cong 2 \times S_{b+3}$.

Now assume $t \geq 3$.  Again the element $u =(\rho_{b+1} \rho_{b})^3$ has no action between the blocks and interchanges the elements in the first $b+3$ blocks of size two.   However, now $u$ acts trivially outside the support of $\rho_b$ and $\rho_{b+1}$.  We can say that $u = \alpha_1 \alpha_2 \cdots \alpha_{b+3}$ where $\alpha_i$ represents a swapping of the two points in block $i$. Since the action on the blocks is the full symmetric group, conjugates of $u$ give products of length $(b+3)$ of different $\alpha_i$.   These generate the full elementary abelian group generated by the $\alpha_i$ when $b+3$ is odd, and generate the even subgroup when $b+3$ is even.
Thus $\G$ is the full wreath product $2 \wr S_{b+t+1}$ when $b+3$ is odd, and is the even subgroup of the wreath product when $b+3$ is even.

Next, we proceed by induction on $b+t$ to show that $\G(b,t)$ is string C.
\cref{Gbt-base-case} establishes all the cases where $t = 2$, while \cite[Lemma 6.6]{FLM12} finishes all the cases where $b=1$.

Let $\G = \G(b,t)$ with $t \geq 3$ and $b \geq 2$, and assume that $\G(c,k)$ is string C for all $c+k < b+t$.
Note that $\G_{b+t-1} \cong \G(b,t-1)$, which is string C by the inductive hypothesis.  

Next we show that $\G_0$ is string C. We see that $\G_0 \cong \G(b-1,t) \mix E_b$, and by inductive hypothesis, $\G(b-1,t)$ is string C. If $b$ is odd, then \cref{b-orientable} tells us that $\G(b-1,t)$ is $b$-orientable, and then $\G_0$ is string C by \cref{mix-ek-string-c}. If $b$ is even, then consider $\G(b-1,t)_0 \cong \G(b-2,t) \mix E_{b-1}$. This is $(b-1)$-orientable by \cref{k-mix}, and so again \cref{mix-ek-string-c} says that $\G_0$ is string C.

Let $\Lambda = \G_0 \cap \G_{r-1}$. By \cite[Proposition 2E16]{ARP}, it remains to show that $\Lambda \leq \G_{0,b+t-1}$. Note that whatever the values of $b$ and $t$, the group $\G$ has a block structure with $b+t+1$ blocks of size 2. Furthermore, $\G_0$ (and thus $\Lambda$) always fixes the first block setwise, and $\G_{r-1}$ (and thus $\Lambda$) fixes the last block pointwise. Thus $\Lambda$ will be contained in the pointwise stabilizer of the last block under the action of $\G_0$.

First suppose $t = 3$. Then
\[ \G_{0,r-1} \cong E_{b} \mix \G(b-1, 2) \cong E_{b} \mix (2 \times S_{b+2}). \]
Since $\G(b-1,2)$ is $b$-orientable by \cref{b-orientable}, it follows from \cref{k-orientable} that $\G_{0,r-1} \cong 2 \times S_{b+2}$. Now, $\G_{r-1} \cong \G(b,t-1) \cong 2 \times S_{b+3}$, so $\G_{0,r-1}$ is a maximal subgroup of $\G_{r-1}$. Furthermore, clearly $\rho_0 \not \in \G_0$, so by \cref{max}, we have that $\G$ is string C in this case.

Now suppose $t \geq 4$. If $b$ is odd, then 
\[ \G_{0,r-1} \cong E_{b+1} \mix (2 \wr S_{b+t-1}). \]
(Here we are not reindexing the way that we did in the last case, so we get $E_{b+1}$ instead of $E_b$.)
Similarly,
\[ \G_0 \cong E_{b+1} \mix (2 \wr S_{b+t}). \]
Clearly the pointwise stabilizer in $\G_0$ of the last block is $E_{b+1} \mix (2 \wr S_{b+t-1})$, and so $\Lambda$ is contained in $\G_{0,r-1}$ in this case.

Finally, suppose $b$ is even. Then
\[ \G_{0,r-1} \cong E_{b+1} \mix (2 \wr S_{b+t-1})^+ \]
and 
\[ \G_0 \cong E_{b+1} \mix (2 \wr S_{b+t})^+. \]
Again, the pointwise stabilizer in $\G_0$ of the last block is precisely $E_{b+1} \mix (2 \wr S_{b+t-1})^+$, so $\Lambda$ is contained in this stabilizer, which is $\G_{0,r-1}$.

\end{proof}

\subsection{Gluing to make more alternating and symmetric groups}

If we glue $\G(b_1,t_1)$ to the dual of $\G(b_2,t_2)$ (at vertex $2b_1+2t_1+2$) we get a new group $\G(b_1,t_1,b_2,t_2)$ for $b_i \geq 1$ and $t_i \geq 2$ which is a permutation group of degree $n= 2b_1+2t_1 + 2b_2 + 2t_2 + 3$.  Our goal is to show that this is a string C-group. In fact, we will show something more general: that gluing $\G(b,t)$ to any even permutation group yields a string C-group.

\begin{lemma}\label{glue-box-left}
Let $\G=\langle \rho_0,\ldots,\rho_{r-1}\rangle$ be an sggi satisfying \cref{glued-pr}, where $\langle \rho_0,\ldots,\rho_{m-1}\rangle$ is one of
the groups $\G(b,t)$, with $b \geq 1$ and $t\geq2$. Suppose that $\langle \rho_m, \ldots, \rho_{r-1} \rangle$ is even and a string C-group. Then, for every $1\leq i\leq m-2$,
\[
\langle \rho_0,\ldots,\rho_{m-1}\rangle
\cap
\langle \rho_i,\ldots,\rho_{r-1}\rangle
=
\langle \rho_i,\ldots,\rho_{m-1}\rangle .
\]
\end{lemma}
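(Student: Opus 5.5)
We adapt the argument of \cref{glueXYZ}, dualized so that the box group $\G(b,t)$ sits on the left. Here $m = b+t$, and $H_1$ is the graph $G(b,t)$ with $v$ the vertex $n$ at the end of the lower tail. The plan is to set $\Lambda = \langle \rho_0,\ldots,\rho_{m-1}\rangle \cap \langle \rho_i,\ldots,\rho_{r-1}\rangle$ and prove $\Lambda \leq \langle \rho_i,\ldots,\rho_{m-1}\rangle$; the reverse inclusion is clear.

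First, since $\Lambda \leq \langle \rho_0,\ldots,\rho_{m-1}\rangle$, every element of $\Lambda$ fixes $H_2\setminus v$ pointwise. It also preserves the block system of $\G(b,t)$, consisting of $b+t+1$ blocks of size 2 on $H_1$. Next, since $\Lambda \leq \langle \rho_i,\ldots,\rho_{r-1}\rangle$, write $\varphi\in\Lambda$ as a word in $\rho_i,\ldots,\rho_{r-1}$ and delete the letters $\rho_k$ with $k\geq m$, as in the proof of \cref{thm:main}. The resulting word $\varphi_2 \in \langle \rho_i,\ldots,\rho_{m-1}\rangle$ agrees with $\varphi$ on $H_1$ away from the orbit $O$ of $v$ under $\langle\rho_i,\ldots,\rho_{r-1}\rangle$. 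So it suffices to control the action on $O\cap H_1$ and on the points of $H_1$ fixed by $\langle\rho_i,\ldots,\rho_{m-1}\rangle$.

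The cleanest route is a case split on $i$. For $i\leq b$, the subgroup $\langle \rho_i,\ldots,\rho_{m-1}\rangle$ is (a mix of $E_i$ with) a shifted $\G(b-i,t)$, or for $i=b$ a tail-type graph. Its action on $H_1$ fixes the first $i$ blocks setwise, fixes some of them pointwise, and acts on the remaining blocks as described in \cref{thm:boxes}: $2\wr S_k$, its even part, or $2\times S_k$. Meanwhile $\Lambda$ fixes pointwise the vertices $1,2,\ldots$ that are not in the support of $\langle\rho_i,\ldots,\rho_{r-1}\rangle$, since $\rho_0,\ldots,\rho_{i-1}$ are excluded and the labels $\geq m$ live in $H_2$. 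This leaves only the action on the blocks meeting the $(i-1)$- and $i$-edges, and the parity of the action inside blocks. For $i > b$, the group $\langle\rho_i,\ldots,\rho_{m-1}\rangle$ acts on the two tails as two copies of a simplex-like path. Arguing as in the $j\leq r-4$ case of \cref{glueXYZ}, it is $S_k\times S_k$ or a diagonal $S_k$ (possibly mixed with an edge). Here $\Lambda$ must fix the block structure and everything to the left of label $i$. We compare $\Lambda$ with this group using the restriction to the two tail orbits.

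The main obstacle is the parity and diagonality issue. $\Lambda$ lies in the block-preserving stabilizer of the fixed blocks, but $\langle \rho_i,\ldots,\rho_{m-1}\rangle$ may be only the even part, for $b-i$ odd by \cref{thm:boxes}, or only diagonal on the tails. To close this gap we use that $\langle\rho_m,\ldots,\rho_{r-1}\rangle$ is even, so every generator, and hence every element of $\langle\rho_i,\ldots,\rho_{r-1}\rangle$, acts on $H_1$ with the parity forced by the $H_1$-generators alone. More precisely, we track a coloring or parity invariant, in the spirit of \cref{k-coloring}, that is preserved by $\langle\rho_i,\ldots,\rho_{r-1}\rangle$ and cuts out exactly $\langle \rho_i,\ldots,\rho_{m-1}\rangle$ inside the stabilizer. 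The cases $i=b$ and $i=b+1$, where the $b$- and $(b+1)$-edges meet, we expect to need a direct check like the $j=r-3,r-2$ cases of \cref{glueXYZ}.
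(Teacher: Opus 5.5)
Your outline has the same architecture as the paper's proof. You constrain $\Lambda$ by the block system of $\G(b,t)$ and by which generators lie in $R_i=\langle\rho_i,\ldots,\rho_{r-1}\rangle$, split on $i$ versus $b$, and use evenness of $\langle\rho_m,\ldots,\rho_{r-1}\rangle$. However, there is a genuine gap: the comparison with $M_i=\langle\rho_i,\ldots,\rho_{m-1}\rangle$ is only promised, and the parity step as you formulate it is not valid. You say elements of $R_i$ act on $H_1$ with parity forced by the $H_1$-generators. But $H_1$ is not $R_i$-invariant ($\rho_m$ moves $v$ into $H_2$), so this defines no homomorphism. A vertex coloring as in \cref{k-coloring} would not help either, since it detects orientability, not evenness.

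Number the blocks $\{2k-1,2k\}$ by $k=1,\ldots,b+t+1$. The invariant that works is the sign on $\Omega_i$, the union of $H_2$ and blocks $i+1,\ldots,b+t+1$, which is $R_i$-invariant. On $\Omega_i$, every generator of $R_i$ other than $\rho_{b+1}$ is even (for indices $\geq m$ by hypothesis), while $\rho_{b+1}$ has $b+3-i$ transpositions. You also need that, for $i\leq b+1$, $R_i$ acts on the first $i$ blocks only through $\rho_{b+1}$. So each $\varphi\in\Lambda$ either fixes those $2i$ points or swaps inside all of those blocks at once (call this $s$), according to the parity of the number of $\rho_{b+1}$'s in an $R_i$-word. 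Your remark about points outside the support of $R_i$ is vacuous for $i\leq b$. Since $\varphi$ is trivial on $H_2\setminus v$, this gives two constraints. If $b-i$ is odd, $\varphi$ is even on blocks $i+1,\ldots,b+t+1$. If $b-i$ is even, $\varphi$ involves $s$ exactly when it is odd on those blocks.

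Your obstacle paragraph misses the second of these constraints. For $b-i$ even and $t\geq3$, the restriction of $M_i$ to blocks $i+1,\ldots,b+t+1$ is $2\wr S_{b+t+1-i}$, and $\rho_{b+1}$ is its only odd generator. So by \cref{mix-is-iso}, $M_i$ is isomorphic to that restriction and has index $2$ in $\langle s\rangle\times(2\wr S_{b+t+1-i})$. When $b$ is even, $\G(b,t)\cong 2\wr S_{b+t+1}$ contains that whole product, so this coupling can only come from $R_i$ and the evenness hypothesis.

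With these facts the cases you defer are routine. The case $i=b$ is the same sign argument. For $i\geq b+1$, the absence of $\rho_b$ makes the upper tail an $R_i$-orbit separate from the lower one. Hence $\Lambda$ preserves rows and blocks and acts diagonally on the tails, as $M_i\cong E_{b+1}\mix S_t$ (for $i=b+1$) or $S_{b+t+1-i}$ does; the option $S_k\times S_k$ never occurs. For $t=2$ an order count suffices: the stabilizer of each of the first $i$ blocks in $\G(b,2)\cong 2\times S_{b+3}$ has the same order as $M_i\cong 2\times S_{b+3-i}$. Finally, the relevant mix is with $E_{b+1}$, not $E_i$.
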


\begin{proof}
First, note that $\G(b,t)$ is string C by \cref{thm:boxes}, and so $\G$ satisfies \cref{glued-cpr}. If $b = 1$, then the claim follows from \cref{glueXYZ} applied to the dual of $\G$, and so we can assume that $b \geq 2$. 

Let $L = \langle \rho_0, \ldots, \rho_{m-1} \rangle$, $R_i = \langle \rho_i,\ldots,\rho_{r-1}\rangle$, and $I_i = L \cap R_i$. Let $M_i = \langle \rho_i, \ldots, \rho_{m-1} \rangle$. Our goal is to show that $I_i \leq M_i$ for $1 \leq i \leq m-2$; the reverse inclusion is obvious.

Since $I_i \leq L$, it preserves the same block structure as $L$, and it fixes $H_2 \setminus v$ pointwise. Since $I_i \leq R_i$, it stabilizes each of the first $i$ blocks setwise. Furthermore, if $i > b+1$, then $I_i$ must fix these first $i$ blocks pointwise, and otherwise the only nontrivial action within blocks is to interchange the two points in every block simultaneously. In particular, this shows that $I_i \leq E_{b+1} \mix (2 \wr S_{b+t+1-i})$ for every $1 \leq i \leq m-2$. (To be clear, we determine the action of $I_i$ on the first $i$ blocks by writing each element of $I_i$ as an element of $L$; then the parity of the number of occurrences of $\rho_{b+1}$ in that word determines whether to fix those blocks pointwise or interchange the elements in each one.)

Suppose $t=2$.  \cref{thm:boxes} showed that $L \cong 2 \times S_{b+3}$ and \cref{b-orientable} showed $L$ is $(b+1)$-orientable.
Since $m-2=b$, we need to consider $1 \leq i \leq b$.  We claim that $M_i \cong 2 \times S_{b+3-i}$.  
When $i=b$ this can easily be verified in GAP.  When $i < b$, then $M_i$ is the mix of $E_{b+1}$ with a group isomorphic to $\G(b-i,2)$, which is $(b+1)$-orientable by \cref{b-orientable}. Then since $\G(b-i,2) \cong 2 \times S_{b+3-i}$ by \cref{thm:boxes}, it follows that $M_i \cong 2 \times S_{b+3-i}$ by \cref{k-orientable}. Since $L \cong 2 \times S_{b+3}$, the stabilizer of the first $i$ blocks is contained in $2 \times S_{b+3-i}$, so $I_i \leq M_i$.

Let $t \geq 3$. There are four different cases to consider based on the size of $i$ relative to $b$. 

First suppose that $1\leq i<b$.
Note when $i < b$, $M_i \cong E_{b+1} \mix \G(b-i,t)$, after adding $i$ to the edge labels of $\G(b-i,t)$.

If $b$ and $i$ have the same parity, then $\G(b-i,t) \cong 2 \wr S_{b+t+1-i}$ by \cref{thm:boxes}. Thus in this case, $M_i \cong E_{b+1} \mix (2 \wr S_{b+t+1-i})$, and we already established that $I_i$ is a subgroup of this. On the other hand, if $b$ and $i$ have opposite parity, then $M_i \cong E_{b+1} \mix (2 \wr S_{b+t+1-i})^+$. In this case, $R_i$ stabilizes each of the first $i$ blocks setwise, and the rest of the block structure forms an even permutation group. So $R_i$ acts evenly on the points other than the first $i$ blocks, and so $I_i$ does as well. Thus $I_i \leq E_{b+1} \mix (2 \wr S_{b+t+1-i})^+ = M_i$.

If $i = b$, then again we have $M_b \cong E_{b+1} \mix (2 \wr S_{t+1})$, and $I_i$ was already demonstrated to lie in this subgroup.

If $i = b+1$, then $M_i$ has a structure that follows from the same logic as the $j = r-3$ case of the proof of \cref{glueXYZ}. In particular, $M_i \cong E_{b+1} \mix S_{t}$. On the other hand, it is clear that $R_i$ (and thus $I_i$) has no action within the blocks other than the first $i$, and so $I_i \leq E_{b+1} \mix S_{t} = M_i$.

Finally, if $b+1 < i \leq m-2$, then $M_i$ fixes the first $i$ blocks pointwise, and $M_i \cong S_{b+t+1-i}$, permuting the remaining blocks. Clearly $R_i$ (and thus $I_i$) must also fix the first $i$ blocks pointwise and permute the remaining blocks with no action within blocks, so $I_i \leq M_i$.

\end{proof}

\begin{theorem}\label{thm:family10}
For $b_i \geq 1$ and $t_i \geq 2$, if $b_1$ and $b_2$ are odd, then the group $\G(b_1,t_1,b_2,t_2)$ is a string C-group isomorphic to $A_{2(b_1+b_2+t_1+t_2) +3}$.
\end{theorem}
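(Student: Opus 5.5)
We aim to verify the hypotheses of \cref{string-C}, using \cref{glue-box-left} for the left-hand intersections and its dual for the right-hand ones. Write $\G = \G(b_1,t_1,b_2,t_2) = \langle \rho_0, \ldots, \rho_{r-1}\rangle$ with $m = b_1+t_1$ and $r = m + b_2 + t_2$. By construction $H_1$ is the graph $G(b_1,t_1)$ and $H_2$ is the dual of $G(b_2,t_2)$. These share only the vertex $v$, which is the last vertex of $G(b_1,t_1)$, incident only to an edge of label $m-1$. So $\G$ satisfies \cref{glued-pr}. By \cref{thm:boxes} both pieces are string C, so $\G$ satisfies \cref{glued-cpr}.

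\textbf{Parity.} Since $b_1$ and $b_2$ are odd and $t_i \geq 2$, every generator of $\G(b_i,t_i)$ should be even. Each label $k \neq b_i+1$ occurs on exactly two edges. Label $b_i+1$ occurs $b_i+1$ times on the vertical box edges and twice on the tail, giving $b_i+3$ edges, which is even. (For $t_i=2$ the tail edges of label $b_i+1$ are exactly the last pair, and the count is the same.) Hence $\G$ is an even permutation group.

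\textbf{Isomorphism type.} The graph is connected, so $\G$ is transitive on $n = 2(b_1+b_2+t_1+t_2)+3$ points. Applying \cref{glued-primitive} with $i=0$ and $j=r-1$, $\G$ acts on the orbit of $v$, which is everything, as $A_n$ or $S_n$. Evenness then forces $\G \cong A_n$.

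\textbf{String C property.} The two conditions of \cref{string-C} remain.
\begin{itemize}
\item Condition (1) is exactly \cref{glue-box-left} applied to $\G$. Here $\langle \rho_0,\ldots,\rho_{m-1}\rangle = \G(b_1,t_1)$, and $\langle \rho_m,\ldots,\rho_{r-1}\rangle$ is the dual of $\G(b_2,t_2)$, which is even and string C.
\item Condition (2) comes from applying \cref{glue-box-left} to the dual $\G^*$, which is $\G(b_2,t_2,b_1,t_1)$ and satisfies the same hypotheses. Its first-type intersections, translated back through the index reversal $k \mapsto r-1-k$, are exactly the intersections $\langle \rho_m,\ldots,\rho_{r-1}\rangle \cap \langle \rho_0,\ldots,\rho_j\rangle = \langle \rho_m,\ldots,\rho_j\rangle$ for $m+1 \leq j \leq r-2$.
\end{itemize}
Then \cref{string-C} yields that $\G$ is string C.

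The main point needing care is the parity count for label $b_i+1$. It is exactly what the oddness of $b_i$ buys, matching the even case of \cref{thm:boxes}. It is also the point where \cref{prop:notC} shows failure for even $b_i$. The small case $t_i=2$ should be checked against the picture, and I expect it to go through.
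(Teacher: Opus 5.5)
Your proposal is correct and follows essentially the same route as the paper: \cref{glue-box-left} gives condition (1) of \cref{string-C}, applying it to the dual $\G(b_2,t_2,b_1,t_1)$ gives condition (2), and \cref{glued-primitive} together with evenness and transitivity identifies the group as $A_n$. Your explicit count of the $b_i+3$ edges of label $b_i+1$ just spells out the evenness hypothesis that the paper's short proof leaves implicit. That hypothesis is needed both for \cref{string-C} and for the hypotheses of \cref{glue-box-left}.
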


\begin{proof}
By \cref{glue-box-left}, the first condition of \cref{string-C} holds.  By
duality, the second condition also holds.  Therefore $\G(b_1,t_1,b_2,t_2)$
is a string C-group.  It is isomorphic to $A_n$ where $n=2(b_1+b_2+t_1+t_2)+3$ by \cref{glued-primitive}.
\end{proof}

It is worth noting that $\G = \G(b_1,t_1,b_2,t_2)$ is not a string C-group when both $b_1$ and $b_2$ are even.  As shown in \cref{prop:notC}, $\G_0 \cap \G_{r-1}$ is larger than $\G_{0,r-1}$ in these cases.

\begin{corollary}
\label{cor:more-examples}
Let $b \geq 1$ be odd and $n \equiv 3 \pmod{4}$ with $n \geq 4b+11$. There is a self-dual string C-group $\G(b, t, b, t)$ of rank $\frac{n-3}{2}$ isomorphic to $A_n$ where $t = \frac{n-4b-3}{4}$.
\end{corollary}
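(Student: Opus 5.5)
The plan is to apply \cref{thm:family10} with $b_1 = b_2 = b$ and $t_1 = t_2 = t$, and then check the self-duality, the rank, and the parameter constraints.

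First, the parameters. Since $n \equiv 3 \pmod 4$ and $4b+3 \equiv 3 \pmod 4$, the number $n - 4b - 3$ is divisible by $4$, so $t = \frac{n-4b-3}{4}$ is an integer. The hypothesis $n \geq 4b+11$ gives $t \geq 2$, and $b \geq 1$ is odd by assumption. The degree of $\G(b,t,b,t)$ is
\[ 2(b+b+t+t)+3 = 4b+4t+3 = n. \]

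Next, the group structure. \cref{thm:family10} applies with both box parameters odd, so $\G(b,t,b,t)$ is a string C-group isomorphic to $A_n$. It is obtained by gluing $\G(b,t)$, which has rank $b+t$, to its own dual. Its rank is therefore
\[ 2(b+t) = \frac{4b+4t}{2} = \frac{n-3}{2}. \]

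Finally, self-duality. This follows from the remark at the end of \cref{sec:glue-op}: gluing an sggi to its own dual produces a permutation representation graph with a symmetry that interchanges the two halves and sends each label $i$ to $r-1-i$. As noted in the background section, such a label-reversing graph symmetry induces a self-duality.

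There is no real obstacle. The only point needing care is confirming that the gluing vertex meets the requirements of \cref{sec:glue-op}. In $G(b,t)$, the last vertex $n = 2b+2t+2$ has degree $1$ and is incident only to an edge of label $b+t-1$, which is the top label for rank $b+t$. So the gluing is legitimate, and its mirror image in the dual is the vertex used on the other side.
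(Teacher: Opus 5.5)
Your proposal is correct and is essentially the paper's own argument. The paper states this corollary without a separate proof, as an immediate specialization of \cref{thm:family10} to $b_1=b_2=b$ and $t_1=t_2=t$, with self-duality coming from gluing $\G(b,t)$ to its own dual as remarked in \cref{sec:glue-op}; your checks that $t$ is an integer with $t\geq 2$, that the degree is $4b+4t+3=n$, and that the rank is $2(b+t)=\frac{n-3}{2}$ are exactly the bookkeeping that specialization requires.
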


Note that when $b\geq 3$, the string C-group $\G(b,t,b,t)$ is not isomorphic to any member of the three self-dual families: $XX(m,m)$, $YY(m,m)$, and $ZZ(m,m)$.  This can be seen from the type of $\G(b,t,b,t)$ which is 
$\{3^b,6^3,3^b\}$ when $t=2$ and $\{3^b,6^2,3^{t-3},6,3^{t-3},6^2,3^b\}$ otherwise.

\section{Conclusion and Open Problems}

When $n=4m+3$ with $m\geq 3$, \cref{cor:3examples} gives three self-dual string C-groups of rank $\frac{n-3}{2}$ isomorphic to $A_n$, and \cref{cor:more-examples} gives $\left \lfloor \frac{n-15}{8} \right\rfloor$ more (one for each odd number $b$ satisfying $3 \leq b \leq m-2$). This gives us $\left\lfloor \frac{n+9}{8}\right\rfloor$ non-isomorphic self-dual string C-groups of rank $\frac{n-3}{2}$ isomorphic to $A_n$, proving \cref{thm:real-main}.  
For example, we construct five
self-dual string C-group representations of $A_{31}$ of rank $14$: $
XX(7,7)$, $YY(7,7)$, $ZZ(7,7)$, $\G(3,4,3,4),$ and $\G(5,2,5,2).$

\medskip
\noindent
\textbf{Open Problem 1.}
Find additional families of self-dual string C-groups of rank $
\left\lfloor \frac{n-1}{2}\right\rfloor-1$ 
isomorphic to $A_n$ when $n\equiv 3\pmod 4$, or prove that the
representations constructed in this paper complete the classification.

The constructions above concern degrees congruent to $3$ modulo $4$.
The computational data suggest that another family could exist for
a different congruence class.  As shown in Appendix~\ref{appendix:A16},
there are self-dual string C-groups of rank $
\left\lfloor \frac{n-1}{2}\right\rfloor-1$ 
isomorphic to $A_n$ when $n=16$.  We believe that the first example
in Appendix~\ref{appendix:A16} belongs to an infinite family occurring
whenever $n\equiv 0\pmod 8$.

\medskip
\noindent
\textbf{Conjecture 1.}
For every $n\geq 16$ with $n\equiv 0\pmod 8$, there exists a
self-dual string C-group of rank $
\left\lfloor \frac{n-1}{2}\right\rfloor-1 $
isomorphic to $A_n$.

Finally, among the alternating groups $A_n$ with $n\leq 19$, our
computations find only five self-dual string C-groups attaining the
maximum possible rank of a string C-group representation.  All five
occur when $n\leq 12$; see \cref{tab:self-dual-an}.  Thus, the
representations constructed in this paper are especially significant
if their rank is the highest possible for a self-dual string C-group
representation of the corresponding alternating group.

\medskip
\noindent
\textbf{Open Problem 2.}
Prove or disprove that there is no self-dual string C-group of rank 
$\left\lfloor \frac{n-1}{2}\right\rfloor$
isomorphic to $A_n$ when $n>12$.

\section*{Acknowledgments}
The calculations in this paper were independently checked in GAP \cite{GAP} and Magma \cite{Magma}.

\appendix

\section{Self-dual string C-groups of highest rank for small $n$}  

In this appendix, we give the type (Schl\"afli symbol), generators, and permutation representation graphs of the self-dual string C-groups of highest rank for $A_n$ with $n \in \{10, 12, 13, 14, 16, 17, 18\}$ (see \cref{tab:self-dual-an}).

\subsection{$A_{10}$} \label{appendix:A10}

\small
\setlength{\tabcolsep}{4pt}
\renewcommand{\arraystretch}{1.35}

\begin{longtable}{@{} l p{0.25\textwidth}p{0.65\textwidth}@{}}
\toprule
\textbf{Type} & \textbf{Generators} & \textbf{Graph} \\
\midrule
\endfirsthead

\toprule
\textbf{Type} & \textbf{Generators} & \textbf{Graph} \\
\midrule
\endhead

$\{9,9\}$ &
\repcell{
\rho_0 &= (3,4)(5,6)(7,8)(9,10)\\
\rho_1 &= (2,3)(4,5)(6,7)(8,9)\\
\rho_2 &= (1,2)(3,4)(5,6)(7,8)
}
&
\graphcell{
*+[o][F]{1} \ar@{-}[r]^{2} & *+[o][F]{2} \ar@{-}[r]^{1} & *+[o][F]{3} \ar@<+.5ex>@{-}[r]^{2} \ar@<-.5ex>@{-}[r]_{0} & *+[o][F]{4} \ar@{-}[r]^{1} & *+[o][F]{5} \ar@<+.5ex>@{-}[r]^{2} \ar@<-.5ex>@{-}[r]_{0} & *+[o][F]{6} \ar@{-}[r]^{1} & *+[o][F]{7} \ar@<+.5ex>@{-}[r]^{2} \ar@<-.5ex>@{-}[r]_{0} & *+[o][F]{8} \ar@{-}[r]^{1} & *+[o][F]{9} \ar@{-}[r]^{0} & *+[o][F]{10}
}
\\
\midrule

$\{9,9\}$ &
\repcell{
\rho_0 &= (3,4)(5,7)(6,8)(9,10)\\
\rho_1 &= (2,3)(4,6)(5,8)(7,9)\\
\rho_2 &= (1,2)(3,5)(4,7)(6,8)
}
&
\graphcell{
& & & *+[o][F]{4} \ar@{-}[d]^{1} \ar@{-}[drr]^{2} & & & & \\
*+[o][F]{1} \ar@{-}[r]^{2} & *+[o][F]{2} \ar@{-}[r]^{1} & *+[o][F]{3} \ar@{-}[ur]^{0} \ar@{-}[drr]_{2} & *+[o][F]{6} \ar@<+.5ex>@{-}[r]^{0} \ar@<-.5ex>@{-}[r]_{2} & *+[o][F]{8}  \ar@{-}[d]^{1} & *+[o][F]{7} \ar@{-}[r]^{1} & *+[o][F]{9} \ar@{-}[r]^{0} & *+[o][F]{10} \\
& & & & *+[o][F]{5} \ar@{-}[ur]_{0} & & &
}
\\
\midrule

$\{21,21\}$ &
\repcell{
\rho_0 &= (2,3)(4,5)(6,7)(8,9)\\
\rho_1 &= (1,2)(3,4)(5,6)(9,10)\\
\rho_2 &= (2,3)(4,5)(6,8)(7,9)
}
&
\graphcell{
& & & & & & *+[o][F]{7} \ar@{-}[dr]^{2} & \\
*+[o][F]{1} \ar@{-}[r]^{1} & *+[o][F]{2} \ar@<+.5ex>@{-}[r]^{2} \ar@<-.5ex>@{-}[r]_{0} & *+[o][F]{3} \ar@{-}[r]^{1} & *+[o][F]{4} \ar@<+.5ex>@{-}[r]^{2} \ar@<-.5ex>@{-}[r]_{0} & *+[o][F]{5} \ar@{-}[r]^{1} & *+[o][F]{6} \ar@{-}[ur]^{0} \ar@{-}[dr]_{2} & & *+[o][F]{9} \ar@{-}[r]^{1} & *+[o][F]{10} \\
& & & & & & *+[o][F]{8} \ar@{-}[ur]_{0} &
}
\\
\midrule

$\{21,21\}$ &
\repcell{
\rho_0 &= (2,3)(4,5)(6,7)(8,9)\\
\rho_1 &= (1,2)(5,6)(7,8)(9,10)\\
\rho_2 &= (2,4)(3,5)(6,8)(7,9)
}
&
\graphcell{
& & *+[o][F]{3} \ar@{-}[dr]^{2} & & & *+[o][F]{7} \ar@{-}[dd]^{1} \ar@{-}[dr]^{2} & & \\
*+[o][F]{1} \ar@{-}[r]^{1} & *+[o][F]{2} \ar@{-}[ur]^{0} \ar@{-}[dr]_{2} & & *+[o][F]{5} \ar@{-}[r]^{1} & *+[o][F]{6} \ar@{-}[ur]_{0} \ar@{-}[dr]_{2} & & *+[o][F]{9} \ar@{-}[r]^{1} & *+[o][F]{10} \\
& & *+[o][F]{4} \ar@{-}[ur]_{0} & & & *+[o][F]{8} \ar@{-}[ur]_{0} & &
}
\\
\midrule

$\{21,21\}$ &
\repcell{
\rho_0 &= (2,3)(4,5)(6,8)(7,9)\\
\rho_1 &= (1,2)(4,6)(5,7)(9,10)\\
\rho_2 &= (2,4)(3,5)(6,7)(8,9)
}
&
\graphcell{
& *+[o][F]{3} \ar@{-}[r]^{2} & *+[o][F]{5} \ar@{-}[r]^{1} & *+[o][F]{7} \ar@{-}[r]^{0} & *+[o][F]{9} \ar@{-}[r]^{1} & *+[o][F]{10} \\
*+[o][F]{1} \ar@{-}[r]^{1} & *+[o][F]{2} \ar@{-}[u]^{0} \ar@{-}[r]^{2} & *+[o][F]{4} \ar@{-}[u]^{0} \ar@{-}[r]^{1} & *+[o][F]{6} \ar@{-}[u]^{2} \ar@{-}[r]^{0} & *+[o][F]{8} \ar@{-}[u]^{2} &
}
\\
\midrule

$\{21,21\}$ &
\repcell{
\rho_0 &= (2,3)(4,5)(6,8)(7,9)\\
\rho_1 &= (1,2)(4,6)(5,7)(8,10)\\
\rho_2 &= (2,4)(3,5)(6,7)(8,9)
}
&
\graphcell{
& *+[o][F]{3} \ar@{-}[r]^{2} & *+[o][F]{5} \ar@{-}[r]^{1} & *+[o][F]{7} \ar@{-}[r]^{0} & *+[o][F]{9} & \\
*+[o][F]{1} \ar@{-}[r]^{1} & *+[o][F]{2} \ar@{-}[u]^{0} \ar@{-}[r]^{2} & *+[o][F]{4} \ar@{-}[u]^{0} \ar@{-}[r]^{1} & *+[o][F]{6} \ar@{-}[u]^{2} \ar@{-}[r]^{0} & *+[o][F]{8} \ar@{-}[u]^{2} \ar@{-}[r]^{1} & *+[o][F]{10}
}
\\
\midrule

$\{21,21\}$ &
\repcell{
\rho_0 &= (1,2)(3,5)(6,7)(8,9)\\
\rho_1 &= (2,3)(4,6)(5,7)(9,10)\\
\rho_2 &= (1,2)(3,5)(6,8)(7,9)
}
&
\graphcell{
& & & & & *+[o][F]{6} \ar@{-}[r]^{1} & *+[o][F]{4} \\
*+[o][F]{1} \ar@<+.5ex>@{-}[r]^{0} \ar@<-.5ex>@{-}[r]_{2} & *+[o][F]{2} \ar@{-}[r]^{1} & *+[o][F]{3} \ar@<+.5ex>@{-}[r]^{0} \ar@<-.5ex>@{-}[r]_{2} & *+[o][F]{5} \ar@{-}[r]^{1} & *+[o][F]{7} \ar@{-}[ur]^{0} \ar@{-}[dr]_{2} & & *+[o][F]{8} \ar@{-}[ul]_{2} \ar@{-}[dl]^{0} \\
& & & & & *+[o][F]{9} \ar@{-}[r]^{1} & *+[o][F]{10}
}
\\
\midrule

$\{21,21\}$ &
\repcell{
\rho_0 &= (2,3)(4,5)(6,8)(9,10)\\
\rho_1 &= (1,2)(4,6)(5,7)(8,9)\\
\rho_2 &= (2,4)(3,5)(6,9)(8,10)
}
&
\graphcell{
*+[o][F]{1} \ar@{-}[r]^{1} & *+[o][F]{2} \ar@{-}[dl]^{0} \ar@{-}[dr]_{2} & & & *+[o][F]{8} \ar@{-}[dr]^{2} & \\
*+[o][F]{3} \ar@{-}[dr]_{2} & & *+[o][F]{4} \ar@{-}[r]^{1} & *+[o][F]{6} \ar@{-}[ur]^{0} \ar@{-}[dr]_{2} & & *+[o][F]{10} \\
*+[o][F]{7} \ar@{-}[r]^{1} & *+[o][F]{5} \ar@{-}[ur]_{0} & & & *+[o][F]{9} \ar@<+.5ex>@{-}[uu]^{1} \ar@<-.5ex>@{-}[ur]_{0} &
}
\\
\midrule

$\{21,21\}$ &
\repcell{
\rho_0 &= (1,2)(3,5)(4,6)(7,9)\\
\rho_1 &= (2,3)(5,7)(6,8)(9,10)\\
\rho_2 &= (1,2)(3,6)(4,5)(8,10)
}
&
\graphcell{
& & & *+[o][F]{6} \ar@{-}[r]^{1} & *+[o][F]{8} \ar@{-}[r]^{2} & *+[o][F]{10} \ar@{-}[dd]^{1} \\
*+[o][F]{1} \ar@<+.5ex>@{-}[r]^{0} \ar@<-.5ex>@{-}[r]_{2} & *+[o][F]{2} \ar@{-}[r]^{1} & *+[o][F]{3} \ar@{-}[ur]^{2} \ar@{-}[dr]_{0} & & *+[o][F]{4} \ar@{-}[ul]_{0} \ar@{-}[dl]^{2} & \\
& & & *+[o][F]{5} \ar@{-}[r]^{1} & *+[o][F]{7} \ar@{-}[r]^{0} & *+[o][F]{9}
}
\\
\midrule

$\{21,21\}$ &
\repcell{
\rho_0 &= (1,2)(3,4)(5,6)(9,10)\\
\rho_1 &= (2,3)(4,5)(6,7)(8,9)\\
\rho_2 &= (1,2)(5,6)(7,8)(9,10)
}
&
\graphcell{
*+[o][F]{1} \ar@<+.5ex>@{-}[r]^{0} \ar@<-.5ex>@{-}[r]_{2} & *+[o][F]{2} \ar@{-}[r]^{1} & *+[o][F]{3} \ar@{-}[r]^{0} & *+[o][F]{4} \ar@{-}[r]^{1} & *+[o][F]{5} \ar@<+.5ex>@{-}[r]^{0} \ar@<-.5ex>@{-}[r]_{2} & *+[o][F]{6} \ar@{-}[r]^{1} & *+[o][F]{7} \ar@{-}[r]^{2} & *+[o][F]{8} \ar@{-}[r]^{1} & *+[o][F]{9} \ar@<+.5ex>@{-}[r]^{0} \ar@<-.5ex>@{-}[r]_{2} & *+[o][F]{10}
}
\\
\midrule

$\{21,21\}$ &
\repcell{
\rho_0 &= (1,2)(3,4)(5,10)(6,7)\\
\rho_1 &= (2,3)(4,5)(7,8)(9,10)\\
\rho_2 &= (1,7)(2,6)(5,10)(8,9)
}
&
\graphcell{
& *+[o][F]{2} \ar@{-}[r]^{1} \ar@{-}[dl] & *+[o][F]{3} \ar@{-}[r]^{0} & *+[o][F]{4} \ar@{-}[r]^{1} & *+[o][F]{5} \\
*+[o][F]{1} \ar@{-}[ur]^{0} \ar@{-}[dr]_{2} & & *+[o][F]{6} \ar@{-}[ul]_{2} & & \\
& *+[o][F]{7} \ar@{-}[ur]_{0} \ar@{-}[r]^{1} & *+[o][F]{8} \ar@{-}[r]^{2} & *+[o][F]{9} \ar@{-}[r]^{1} & *+[o][F]{10} \ar@<+.5ex>@{-}[uu]^{0} \ar@<-.5ex>@{-}[uu]_{2}
}
\\

\bottomrule
\end{longtable}

\subsection{$A_{12}$} \label{appendix:A12}

Note that the graph below repeats vertices 6 and 8 for readability. The duality is realized by a reflection in the graph through the line that connects vertices 3 and 4. 

We note that it is much more difficult to draw this graph than all the others in the appendix because it is the only graph where no generator fixes any vertex. \\

\small
\setlength{\tabcolsep}{4pt}
\renewcommand{\arraystretch}{1.35}

\begin{longtable}{@{} p{0.12\textwidth} p{0.40\textwidth}p{0.45\textwidth}@{}}
\toprule
\textbf{Type} & \textbf{Generators} & \textbf{Graph} \\
\midrule
\endfirsthead

\toprule
\textbf{Type} & \textbf{Generators} & \textbf{Graph} \\
\midrule
\endhead

$\{3,6,6,3\}$ & 
\repcell{
\rho_0 &= (1,2)(3,9)(4,7)(5,8)(6,11)(10,12)\\
\rho_1 &= (1,3)(2,6)(4,10)(5,7)(8,12)(9,11)\\
\rho_2 &= (1,2)(3,4)(5,8)(6,12)(7,9)(10,11)\\
\rho_3 &= (1,4)(2,7)(3,10)(5,6)(8,11)(9,12)\\
\rho_4 &= (1,5)(2,8)(3,7)(4,9)(6,12)(10,11)
}
&
\graphcell{
*+[o][F]{6} \ar@{-}[r]^1 \ar@{-}[ddddr]^(0.2)0
& *+[o][F]{2} \ar@{-}[ddddl]_(0.2)4 \ar@{-}[r]^0 \ar@<-.5ex>@{-}[r]_2 \ar@{-}[dr]_(0.3)3
& *+[o][F]{1} \ar@{-}[r]^4 \ar@{-}[ddl]_(0.7)1 \ar@{-}[ddr]^(0.7)3
& *+[o][F]{5} \ar@{-}[ddddr]^(0.2)3 \ar@{-}[r]^0 \ar@<-.5ex>@{-}[r]_2 \ar@{-}[dl]^(0.3)1
& *+[o][F]{8} \ar@{-}[ddddl]_(0.2)1
\\
&& *+[o][F]{7} \ar@{-}[dd]_(0.2)2 \ar@{-}[dl]^4 \ar@{-}[dr]^(0.2)0 \\
& *+[o][F]{3} \ar@{-}[rr]_(0.4)2 \ar@{-}[dr]^0 && *+[o][F]{4} \ar@{-}[dl]^(0.2)4 \\
&& *+[o][F]{9} \ar@{-}[dl]^(0.2)1 \ar@{-}[dr]^(0.7)3 \\
*+[o][F]{8} \ar@{-}[r]_3
& *+[o][F]{11} \ar@{-}[r]^2 \ar@<-.5ex>@{-}[r]_4
& *+[o][F]{10} \ar@{-}[uul]^3 \ar@{-}[uur]^1 \ar@{-}[r]_0
& *+[o][F]{12} \ar@{-}[r]^2 \ar@<-.5ex>@{-}[r]_4
& *+[o][F]{6}
}

\\

\bottomrule
\end{longtable}

\subsection{$A_{13}$} \label{appendix:A13}

\small
\setlength{\tabcolsep}{4pt}
\renewcommand{\arraystretch}{1.35}

\begin{longtable}{@{}p{0.15\textwidth} p{0.25\textwidth}p{0.58\textwidth}@{}}
\toprule
\textbf{Type} & \textbf{Generators} & \textbf{Graph} \\
\midrule
\endfirsthead

\toprule
\textbf{Type} & \textbf{Generators} & \textbf{Graph} \\
\midrule
\endhead

$\{4,6,4\}$ & 
\repcell{
\rho_0 &= (5,6)(11,12)\\
\rho_1 &= (4,5)(6,7)(10,11)(12,13)\\
\rho_2 &= (1,2)(3,4)(7,8)(9,10)\\
\rho_3 &= (2,3)(8,9)
}
&
\graphcell{
*+[o][F]{1} \ar@{-}[r]^{2} &
*+[o][F]{2} \ar@{-}[r]^{3} &
*+[o][F]{3} \ar@{-}[r]^{2} &
*+[o][F]{4} \ar@{-}[r]^{1} &
*+[o][F]{5} \ar@{-}[r]^{0} &
*+[o][F]{6} \ar@{-}[dr]^{1} & \\
& & & & & & *+[o][F]{7} \\
*+[o][F]{13} \ar@{-}[r]^{1} &
*+[o][F]{12} \ar@{-}[r]^{0} &
*+[o][F]{11} \ar@{-}[r]^{1} &
*+[o][F]{10} \ar@{-}[r]^{2} &
*+[o][F]{9} \ar@{-}[r]^{3} &
*+[o][F]{8} \ar@{-}[ur]_{2} & 
}
\\
\midrule

$\{6,6,6\}$ & 
\repcell{
\rho_0 &= (3,4)(5,6)\\
\rho_1 &= (2,3)(4,5)(6,7)(12,13)\\
\rho_2 &= (1,2)(7,8)(9,10)(11,12)\\
\rho_3 &= (8,9)(10,11)
}
&
\graphcell{
*+[o][F]{1} \ar@{-}[r]^{2} &
*+[o][F]{2} \ar@{-}[r]^{1} &
*+[o][F]{3} \ar@{-}[r]^{0} &
*+[o][F]{4} \ar@{-}[r]^{1} &
*+[o][F]{5} \ar@{-}[r]^{0} &
*+[o][F]{6} \ar@{-}[dr]^{1} & \\
& & & & & & *+[o][F]{7} \\
*+[o][F]{13} \ar@{-}[r]^{1} &
*+[o][F]{12} \ar@{-}[r]^{2} &
*+[o][F]{11} \ar@{-}[r]^{3} &
*+[o][F]{10} \ar@{-}[r]^{2} &
*+[o][F]{9} \ar@{-}[r]^{3} &
*+[o][F]{8} \ar@{-}[ur]^{2} & }
\\

\bottomrule
\end{longtable}

\subsection{$A_{14}$} \label{appendix:A14}

\small
\setlength{\tabcolsep}{4pt}
\renewcommand{\arraystretch}{1.35}

\begin{longtable}{@{} p{0.12\textwidth} p{0.24\textwidth}p{0.62\textwidth}@{}}
\toprule
\textbf{Type} & \textbf{Generators} & \textbf{Graph} \\
\midrule
\endfirsthead

\toprule
\textbf{Type} & \textbf{Generators} & \textbf{Graph} \\
\midrule
\endhead

$\{6,10,6\}$ & 
\repcell{
\rho_0 &= (1,2)(6,7)\\
\rho_1 &= (2,3)(4,13)(5,14)(7,8)\\ &\quad (9,10)(11,12)\\
\rho_2 &= (1,6)(2,7)(3,4)(8,9)\\ &\quad (10,11)(12,13)\\
\rho_3 &= (4,5)(13,14)
}
&
\graphcell[@-0pc@C=1.5em]{
*+[o][F]{1} \ar@{-}[r]^{0} \ar@{-}[d]^{2}
& *+[o][F]{2} \ar@{-}[rrr]^{1} \ar@{-}[d]^{2}
& & &
*+[o][F]{3} \ar@{-}[rrr]^{2}
& & &
*+[o][F]{4} \ar@{-}[r]^{3} \ar@{-}[d]^{1}
& *+[o][F]{5} \ar@{-}[d]^{1} \\
*+[o][F]{6} \ar@{-}[r]_{0}
& *+[o][F]{7} \ar@{-}[r]_{1}
& *+[o][F]{8} \ar@{-}[r]_{2}
& *+[o][F]{9} \ar@{-}[r]_{1}
& *+[o][F]{10} \ar@{-}[r]_{2}
& *+[o][F]{11} \ar@{-}[r]_{1}
& *+[o][F]{12} \ar@{-}[r]_{2}
& *+[o][F]{13} \ar@{-}[r]_{3}
& *+[o][F]{14}
}
\\

\bottomrule
\end{longtable}

\subsection{$A_{16}$} \label{appendix:A16}

\small
\setlength{\tabcolsep}{4pt}
\renewcommand{\arraystretch}{1.35}

\begin{longtable}{@{} p{0.12\textwidth} p{0.29\textwidth}p{0.57\textwidth}@{}}
\toprule
\textbf{Type} & \textbf{Generators} & \textbf{Graph} \\
\midrule
\endfirsthead

\toprule
\textbf{Type} & \textbf{Generators} & \textbf{Graph} \\
\midrule
\endhead

$\{3,6,4,6,3\}$ & 
\repcell{
\rho_0 &= (1,2)(8,9)\\
\rho_1 &= (2,3)(9,10)\\
\rho_2 &= (3,4)(5,14)(6,15)(7,16)\\
       &\quad (10,11)(12,13)\\
\rho_3 &= (1,8)(2,9)(3,10)(4,5)\\
       &\quad (11,12)(13,14)\\
\rho_4 &= (5,6)(14,15)\\
\rho_5 &= (6,7)(15,16)
}
&
\graphcell[@-0pc@C=1.5em]{
*+[o][F]{1} \ar@{-}[r]^{0} \ar@{-}[d]^{3}
& *+[o][F]{2} \ar@{-}[r]^{1} \ar@{-}[d]^{3}
& *+[o][F]{3} \ar@{-}[rr]^{2} \ar@{-}[d]^{3}
& &
*+[o][F]{4} \ar@{-}[rr]^{3}
& &
*+[o][F]{5} \ar@{-}[r]^{4} \ar@{-}[d]^{2}
& *+[o][F]{6} \ar@{-}[r]^{5} \ar@{-}[d]^{2}
& *+[o][F]{7} \ar@{-}[d]^{2} \\
*+[o][F]{8} \ar@{-}[r]_{0}
& *+[o][F]{9} \ar@{-}[r]_{1}
& *+[o][F]{10} \ar@{-}[r]_{2}
& *+[o][F]{11} \ar@{-}[r]_{3}
& *+[o][F]{12} \ar@{-}[r]_{2}
& *+[o][F]{13} \ar@{-}[r]_{3}
& *+[o][F]{14} \ar@{-}[r]_{4}
& *+[o][F]{15} \ar@{-}[r]_{5}
& *+[o][F]{16}
}
\\
\midrule

$\{4,4,4,4,4\}$ & 
\repcell{
\rho_0 &= (1,8)(2,9)\\
\rho_1 &= (2,3)(9,10)\\
\rho_2 &= (1,2)(3,4)(5,14)(6,15)\\
       &\quad (7,16)(8,9)(10,11)(12,13)\\
\rho_3 &= (1,8)(2,9)(3,10)(4,5)\\
       &\quad (6,7)(11,12)(13,14)(15,16)\\
\rho_4 &= (5,6)(14,15)\\
\rho_5 &= (6,15)(7,16)
}
&
\graphcell[@-0pc@C=1.5em]{
*+[o][F]{1} \ar@{-}[r]^{2} \ar@<+.5ex>@{-}[d]^{0} \ar@<-.5ex>@{-}[d]_{3}
& *+[o][F]{2} \ar@{-}[r]^{1} \ar@<+.5ex>@{-}[d]^{0} \ar@<-.5ex>@{-}[d]_{3}
& *+[o][F]{3} \ar@{-}[rr]^{2} \ar@{-}[d]^{3}
& &
*+[o][F]{4} \ar@{-}[rr]^{3}
& &
*+[o][F]{5} \ar@{-}[r]^{4} \ar@{-}[d]^{2}
& *+[o][F]{6} \ar@{-}[r]^{3} \ar@<+.5ex>@{-}[d]^{2} \ar@<-.5ex>@{-}[d]_{5}
& *+[o][F]{7} \ar@<+.5ex>@{-}[d]^{2} \ar@<-.5ex>@{-}[d]_{5} \\
*+[o][F]{8} \ar@{-}[r]_{2}
& *+[o][F]{9} \ar@{-}[r]_{1}
& *+[o][F]{10} \ar@{-}[r]_{2}
& *+[o][F]{11} \ar@{-}[r]_{3}
& *+[o][F]{12} \ar@{-}[r]_{2}
& *+[o][F]{13} \ar@{-}[r]_{3}
& *+[o][F]{14} \ar@{-}[r]_{4}
& *+[o][F]{15} \ar@{-}[r]_{3}
& *+[o][F]{16}
}
\\
\midrule

$\{3,4,4,4,3\}$ & 
\repcell{
\rho_0 &= (1,2)(8,9)\\
\rho_1 &= (2,3)(9,10)\\
\rho_2 &= (1,2)(3,4)(5,14)(6,15)\\
       &\quad (7,16)(8,9)(10,11)(12,13)\\
\rho_3 &= (1,8)(2,9)(3,10)(4,5)\\
       &\quad (6,7)(11,12)(13,14)(15,16)\\
\rho_4 &= (5,6)(14,15)\\
\rho_5 &= (6,7)(15,16)
}
&
\graphcell[@-0pc@C=1.5em]{
*+[o][F]{1} \ar@<+.5ex>@{-}[r]^{0} \ar@<-.5ex>@{-}[r]_{2} \ar@{-}[d]^{3}
& *+[o][F]{2} \ar@{-}[r]^{1} \ar@{-}[d]^{3}
& *+[o][F]{3} \ar@{-}[rr]^{2} \ar@{-}[d]^{3}
& &
*+[o][F]{4} \ar@{-}[rr]^{3}
& &
*+[o][F]{5} \ar@{-}[r]^{4} \ar@{-}[d]^{2}
& *+[o][F]{6} \ar@<+.5ex>@{-}[r]^{5} \ar@<-.5ex>@{-}[r]_{3} \ar@{-}[d]^{2}
& *+[o][F]{7} \ar@{-}[d]^{2} \\
*+[o][F]{8} \ar@<+.5ex>@{-}[r]^{0} \ar@<-.5ex>@{-}[r]_{2}
& *+[o][F]{9} \ar@{-}[r]_{1}
& *+[o][F]{10} \ar@{-}[r]_{2}
& *+[o][F]{11} \ar@{-}[r]_{3}
& *+[o][F]{12} \ar@{-}[r]_{2}
& *+[o][F]{13} \ar@{-}[r]_{3}
& *+[o][F]{14} \ar@{-}[r]_{4}
& *+[o][F]{15} \ar@<+.5ex>@{-}[r]^{5} \ar@<-.5ex>@{-}[r]_{3}
& *+[o][F]{16}
}
\\

\bottomrule
\end{longtable}

\subsection{$A_{17}$} \label{appendix:A17}

\small
\setlength{\tabcolsep}{4pt}
\renewcommand{\arraystretch}{1.35}

\begin{longtable}{@{} p{0.12\textwidth} p{0.29\textwidth}p{0.57\textwidth}@{}}
\toprule
\textbf{Type} & \textbf{Generators} & \textbf{Graph} \\
\midrule
\endfirsthead

\toprule
\textbf{Type} & \textbf{Generators} & \textbf{Graph} \\
\midrule
\endhead

$\{4,12,6,12,4\}$ & 
\repcell{
\rho_0 &= (2,3)(6,7)\\
\rho_1 &= (1,2)(3,4)(5,6)(7,8)\\
\rho_2 &= (4,5)(8,9)\\
\rho_3 &= (9,10)(13,14)\\
\rho_4 &= (10,11)(12,13)(14,15)(16,17)\\
\rho_5 &= (11,12)(15,16)
}
&
\graphcell{
*+[o][F]{1} \ar@{-}[r]^{1}
& *+[o][F]{2} \ar@{-}[r]^{0}
& *+[o][F]{3} \ar@{-}[r]^{1}
& *+[o][F]{4} \ar@{-}[r]^{2}
& *+[o][F]{5} \ar@{-}[r]^{1}
& *+[o][F]{6} \ar@{-}[r]^{0}
& *+[o][F]{7} \ar@{-}[r]^{1}
& *+[o][F]{8} \ar@{-}[dr]^{2} & \\
&&&&&&&& *+[o][F]{9} \\
*+[o][F]{17} \ar@{-}[r]^{4}
& *+[o][F]{16} \ar@{-}[r]^{5}
& *+[o][F]{15} \ar@{-}[r]^{4}
& *+[o][F]{14} \ar@{-}[r]^{3}
& *+[o][F]{13} \ar@{-}[r]^{4}
& *+[o][F]{12} \ar@{-}[r]^{5}
& *+[o][F]{11} \ar@{-}[r]^{4}
& *+[o][F]{10} \ar@{-}[ur]^{3}
}
\\
\midrule

$\{6,12,6,12,6\}$ & 
\repcell{
\rho_0 &= (4,5)(6,7)\\
\rho_1 &= (1,2)(3,4)(5,6)(7,8)\\
\rho_2 &= (2,3)(8,9)\\
\rho_3 &= (9,10)(15,16)\\
\rho_4 &= (10,11)(12,13)(14,15)(16,17)\\
\rho_5 &= (11,12)(13,14)
}
&
\graphcell{
*+[o][F]{1} \ar@{-}[r]^{1}
& *+[o][F]{2} \ar@{-}[r]^{2}
& *+[o][F]{3} \ar@{-}[r]^{1}
& *+[o][F]{4} \ar@{-}[r]^{0}
& *+[o][F]{5} \ar@{-}[r]^{1}
& *+[o][F]{6} \ar@{-}[r]^{0}
& *+[o][F]{7} \ar@{-}[r]^{1}
& *+[o][F]{8} \ar@{-}[dr]^{2} & \\
&&&&&&&& *+[o][F]{9} \\
*+[o][F]{17} \ar@{-}[r]^{4}
& *+[o][F]{16} \ar@{-}[r]^{3}
& *+[o][F]{15} \ar@{-}[r]^{4}
& *+[o][F]{14} \ar@{-}[r]^{5}
& *+[o][F]{13} \ar@{-}[r]^{4}
& *+[o][F]{12} \ar@{-}[r]^{5}
& *+[o][F]{11} \ar@{-}[r]^{4}
& *+[o][F]{10} \ar@{-}[ur]^{3} & 
}
\\

\bottomrule
\end{longtable}

\subsection{$A_{18}$} \label{appendix:A18}

\small
\setlength{\tabcolsep}{4pt}
\renewcommand{\arraystretch}{1.35}

\begin{longtable}{@{} p{0.1\textwidth} p{0.29\textwidth}p{0.6\textwidth}@{}}
\toprule
\textbf{Type} & \textbf{Generators} & \textbf{Graph} \\
\midrule
\endfirsthead

\toprule
\textbf{Type} & \textbf{Generators} & \textbf{Graph} \\
\midrule
\endhead

$\{12,6,12\}$ & 
\repcell{
\rho_0 &= (1,2)(10,11)\\
\rho_1 &= (2,3)(4,13)(5,14)(7,8)(9,10)\\
       &\quad (11,12)(15,16)(17,18)\\
\rho_2 &= (1,10)(2,11)(3,4)(6,7)\\
       &\quad (8,9)(12,13)(14,15)(16,17)\\
\rho_3 &= (4,5)(13,14)
}
&
\graphcell[@-0pc@C=1.4em]{
& & & &
*+[o][F]{1} \ar@{-}[r]^{0} \ar@{-}[d]^{2}
& *+[o][F]{2} \ar@{-}[r]^{1} \ar@{-}[d]^{2}
& *+[o][F]{3} \ar@{-}[r]^{2}
& *+[o][F]{4} \ar@{-}[r]^{3} \ar@{-}[d]^{1}
& *+[o][F]{5} \ar@{-}[d]^{1}
& & & & \\
*+[o][F]{6} \ar@{-}[r]_{2}
& *+[o][F]{7} \ar@{-}[r]_{1}
& *+[o][F]{8} \ar@{-}[r]_{2}
& *+[o][F]{9} \ar@{-}[r]_{1}
& *+[o][F]{10} \ar@{-}[r]_{0}
& *+[o][F]{11} \ar@{-}[r]_{1}
& *+[o][F]{12} \ar@{-}[r]_{2}
& *+[o][F]{13} \ar@{-}[r]_{3}
& *+[o][F]{14} \ar@{-}[r]_{2}
& *+[o][F]{15} \ar@{-}[r]_{1}
& *+[o][F]{16} \ar@{-}[r]_{2}
& *+[o][F]{17} \ar@{-}[r]_{1}
& *+[o][F]{18}
}
\\
\midrule

$\{4,6,4\}$ & 
\repcell{
\rho_0 &= (2,3)(11,12)\\
\rho_1 &= (1,2)(3,4)(5,14)(6,15)\\
       &\quad (8,9)(10,11)(12,13)(16,17)\\
\rho_2 &= (2,11)(3,12)(4,5)(6,7)\\
       &\quad (9,10)(13,14)(15,16)(17,18)\\
\rho_3 &= (5,6)(14,15)
}
&
\graphcell[@-0pc@C=1.4em]{
& &
*+[o][F]{1} \ar@{-}[r]^{1}
& *+[o][F]{2} \ar@{-}[r]^{0} \ar@{-}[d]^{2}
& *+[o][F]{3} \ar@{-}[r]^{1} \ar@{-}[d]^{2}
& *+[o][F]{4} \ar@{-}[r]^{2}
& *+[o][F]{5} \ar@{-}[r]^{3} \ar@{-}[d]^{1}
& *+[o][F]{6} \ar@{-}[r]^{2} \ar@{-}[d]^{1}
& *+[o][F]{7}
& & \\
*+[o][F]{8} \ar@{-}[r]_{1}
& *+[o][F]{9} \ar@{-}[r]_{2}
& *+[o][F]{10} \ar@{-}[r]_{1}
& *+[o][F]{11} \ar@{-}[r]_{0}
& *+[o][F]{12} \ar@{-}[r]_{1}
& *+[o][F]{13} \ar@{-}[r]_{2}
& *+[o][F]{14} \ar@{-}[r]_{3}
& *+[o][F]{15} \ar@{-}[r]_{2}
& *+[o][F]{16} \ar@{-}[r]_{1}
& *+[o][F]{17} \ar@{-}[r]_{2}
& *+[o][F]{18}
}
\\
\midrule

$\{6,14,6\}$ & 
\repcell{
\rho_0 &= (1,2)(14,15)\\
\rho_1 &= (2,3)(4,5)(6,7)(8,9)\\
       &\quad (10,11)(12,17)(13,18)(15,16)\\
\rho_2 &= (1,14)(2,15)(3,4)(5,6)\\
       &\quad (7,8)(9,10)(11,12)(16,17)\\
\rho_3 &= (12,13)(17,18)
}
&
\graphcell[@-0pc@C=1.4em]{
*+[o][F]{1} \ar@{-}[r]^{0} \ar@{-}[d]^{2}
& *+[o][F]{2} \ar@{-}[r]^{1} \ar@{-}[d]^{2}
& *+[o][F]{3} \ar@{-}[r]^{2}
& *+[o][F]{4} \ar@{-}[r]^{1}
& *+[o][F]{5} \ar@{-}[r]^{2}
& *+[o][F]{6} \ar@{-}[r]^{1}
& *+[o][F]{7} \ar@{-}[r]^{2}
& *+[o][F]{8} \ar@{-}[r]^{1}
& *+[o][F]{9} \ar@{-}[r]^{2}
& *+[o][F]{10} \ar@{-}[r]^{1}
& *+[o][F]{11} \ar@{-}[r]^{2}
& *+[o][F]{12} \ar@{-}[r]^{3} \ar@{-}[d]^{1}
& *+[o][F]{13} \ar@{-}[d]^{1} \\
*+[o][F]{14} \ar@{-}[r]_{0}
& *+[o][F]{15} \ar@{-}[rrrrr]_{1}
& & & & &
*+[o][F]{16} \ar@{-}[rrrrr]_{2}
& & & & &
*+[o][F]{17} \ar@{-}[r]_{3}
& *+[o][F]{18}
}
\\
\midrule

$\{6,14,6\}$ & 
\repcell{
\rho_0 &= (1,2)(12,13)\\
\rho_1 &= (2,3)(4,5)(6,7)(8,9)\\
       &\quad (10,17)(11,18)(13,14)(15,16)\\
\rho_2 &= (1,12)(2,13)(3,4)(5,6)\\
       &\quad (7,8)(9,10)(14,15)(16,17)\\
\rho_3 &= (10,11)(17,18)
}
&
\graphcell[@-0pc@C=1.4em]{
*+[o][F]{1} \ar@{-}[r]^{0} \ar@{-}[d]^{2}
& *+[o][F]{2} \ar@{-}[r]^{1} \ar@{-}[d]^{2}
& *+[o][F]{3} \ar@{-}[r]^{2}
& *+[o][F]{4} \ar@{-}[r]^{1}
& *+[o][F]{5} \ar@{-}[r]^{2}
& *+[o][F]{6} \ar@{-}[r]^{1}
& *+[o][F]{7} \ar@{-}[r]^{2}
& *+[o][F]{8} \ar@{-}[r]^{1}
& *+[o][F]{9} \ar@{-}[r]^{2}
& *+[o][F]{10} \ar@{-}[r]^{3} \ar@{-}[d]^{1}
& *+[o][F]{11} \ar@{-}[d]^{1} \\
*+[o][F]{12} \ar@{-}[r]_{0}
& *+[o][F]{13} \ar@{-}[rr]_{1}
& &
*+[o][F]{14} \ar@{-}[rr]_{2}
& &
*+[o][F]{15} \ar@{-}[rr]_{1}
& &
*+[o][F]{16} \ar@{-}[rr]_{2}
& &
*+[o][F]{17} \ar@{-}[r]_{3}
& *+[o][F]{18}
}
\\

\bottomrule
\end{longtable}

\bibliographystyle{plain}
\bibliography{Alt}

\begin{thebibliography}{10}

\bibitem{Magma}
Wieb Bosma, John Cannon, and Catherine Playoust.
\newblock The {M}agma algebra system. {I}. {T}he user language.
\newblock {\em J. Symbolic Comput.}, 24(3-4):235--265, 1997.
\newblock Computational algebra and number theory (London, 1993).

\bibitem{CFLM2016}
Peter~J. Cameron, Maria~Elisa Fernandes, Dimitri Leemans, and Mark Mixer.
\newblock String {C}-groups as transitive subgroups of {${\rm S}_n$}.
\newblock {\em J. Algebra}, 447:468--478, 2016.

\bibitem{highest-rank-an}
Peter~J. Cameron, Maria~Elisa Fernandes, Dimitri Leemans, and Mark Mixer.
\newblock Highest rank of a polytope for {$A_n$}.
\newblock {\em Proc. Lond. Math. Soc. (3)}, 115(1):135--176, 2017.

\bibitem{var-gps}
Gabe Cunningham.
\newblock Variance groups and the structure of mixed polytopes.
\newblock In {\em Rigidity and symmetry}, volume~70 of {\em Fields Inst.
  Commun.}, pages 97--116. Springer, New York, 2014.

\bibitem{poly-mix}
Gabe Cunningham and Isabel Hubard.
\newblock Polytopality criteria for the mix of polytopes and maniplexes.
\newblock {\em arXiv preprint arXiv:2506.19334}, 2025.

\bibitem{DM96}
John~D. Dixon and Brian Mortimer.
\newblock {\em Permutation groups}, volume 163 of {\em Graduate Texts in
  Mathematics}.
\newblock Springer-Verlag, New York, 1996.

\bibitem{FL11}
Maria~Elisa Fernandes and Dimitri Leemans.
\newblock Polytopes of high rank for the symmetric groups.
\newblock {\em Adv. Math.}, 228(6):3207--3222, 2011.

\bibitem{FLM12}
Maria~Elisa Fernandes, Dimitri Leemans, and Mark Mixer.
\newblock All alternating groups {$A_n$} with {$n\geq12$} have polytopes of
  rank {$\lfloor\frac{n-1}{2}\rfloor$}.
\newblock {\em SIAM J. Discrete Math.}, 26(2):482--498, 2012.

\bibitem{FLM12A}
Maria~Elisa Fernandes, Dimitri Leemans, and Mark Mixer.
\newblock Polytopes of high rank for the alternating groups.
\newblock {\em J. Combin. Theory Ser. A}, 119(1):42--56, 2012.

\bibitem{GAP}
{GAP} {\textendash} {G}roups, {A}lgorithms, and {P}rogramming, {V}ersion
  4.11.0.
\newblock \href {https://www.gap-system.org}
  {\texttt{https://www.gap-system.org}}, Feb 2020.

\bibitem{flag-bicolorings}
Hiroki Koike, Daniel Pellicer, Miguel Raggi, and Steve Wilson.
\newblock Flag bicolorings, pseudo-orientations, and double covers of maps.
\newblock {\em Electron. J. Combin.}, 24(1):Paper No. 1.3, 23, 2017.

\bibitem{leemans-mulpas-gluing}
Dimitri Leemans and Jessica Mulpas.
\newblock Two gluing methods for string {C}-group representations of the
  symmetric groups.
\newblock {\em Comb. Theory}, 6(1):Paper No. 13, 17, 2026.

\bibitem{ARP}
Peter McMullen and Egon Schulte.
\newblock {\em Abstract regular polytopes}, volume~92 of {\em Encyclopedia of
  Mathematics and its Applications}.
\newblock Cambridge University Press, Cambridge, 2002.

\bibitem{CPR}
Daniel Pellicer.
\newblock C{PR} graphs and regular polytopes.
\newblock {\em European J. Combin.}, 29(1):59--71, 2008.

\bibitem{parallelProduct}
Stephen~E. Wilson.
\newblock Parallel products in groups and maps.
\newblock {\em J. Algebra}, 167(3):539--546, 1994.

\end{thebibliography}

\end{document}